\RequirePackage{amsthm}

\documentclass[sn-mathphys,Numbered,pdflatex]{sn-jnl}
\usepackage[defaultsans]{droidsans}
\usepackage{lmodern}
\usepackage{anyfontsize}

% 数式
\usepackage{amsmath,amssymb,amsfonts}
\usepackage{amsthm}
\usepackage{mathrsfs}
\usepackage{bm}
\usepackage{mathtools}
\usepackage{hyperref}
\hypersetup{
	colorlinks,
	linkcolor={blue!30!black},
	citecolor={blue!50!black},
	urlcolor={blue!80!black}
}
\usepackage[title]{appendix}
\usepackage{textcomp}
\usepackage{manyfoot}
\usepackage{listings}

% 図・グラフィック
\usepackage{booktabs}
\makeatletter
\def\caption@documentclass{elsarticle}%
\makeatother
\usepackage[hang,small,bf]{caption}
\captionsetup{compatibility=false}
\usepackage[scale=2]{ccicons}
\usepackage{colortbl,color}
\usepackage{pgfplots}
\pgfplotsset{compat=newest}
\usepgfplotslibrary{dateplot}
\usepackage{graphicx} % Required for inserting images
\graphicspath{{figures/}}
\usepackage{subcaption}
\usepackage{xcolor,xspace}
\usepackage{here}
\usepackage{rotating}
\usepackage{multirow}
\usepackage{tablefootnote}

\usepackage{tikz}
\usetikzlibrary{arrows.meta,bbox}
\usetikzlibrary{calc}

% ページ
\usepackage{bibunits}
\usepackage{changepage}
\usepackage{enumerate}
\usepackage{framed}
\usepackage{tcolorbox}
\usepackage{hyperref}

% ベクトル・行列
\newcommand{\zero}{\bm{0}}
\newcommand{\one}{\bm{1}}
\renewcommand{\a}{\bm{a}}
\renewcommand{\b}{\bm{b}}

\renewcommand{\d}{\bm{d}}

\renewcommand{\u}{\bm{u}}

\newcommand{\x}{\bm{x}}
\newcommand{\y}{\bm{y}}

\newcommand{\A}{\bm{A}}
\newcommand{\B}{\bm{B}}
\renewcommand{\H}{\bm{H}}
\newcommand{\I}{\bm{I}}

\newcommand{\Q}{\bm{Q}}
\newcommand{\W}{\bm{W}}

% 集合
\DeclareMathOperator{\cl}{cl}
\DeclareMathOperator{\interior}{int}
\DeclareMathOperator{\R}{\mathbb{R}}

\DeclareMathOperator{\dom}{dom}
\newcommand{\extended}{[-\infty,+\infty]}
\newcommand{\extendedleft}{(-\infty,+\infty]}
% 演算子

\DeclareMathOperator{\diag}{diag}
\DeclareMathOperator{\dist}{dist}

\DeclareMathOperator{\prox}{prox}
\DeclareMathOperator{\sgn}{sgn}
\DeclareMathOperator{\soft}{soft}

% その他

% 下付き

\DeclareMathOperator*{\argmin}{argmin}

\newcommand{\ie}{i.e.}
\newcommand{\etal}{et al.}

\numberwithin{equation}{section}
\theoremstyle{thmstyleone}
\newtheorem{theorem}{Theorem}
\newtheorem{proposition}[theorem]{Proposition}
\newtheorem{lemma}[theorem]{Lemma}

\theoremstyle{thmstyletwo}
\newtheorem{example}[theorem]{Example}
\newtheorem{remark}[theorem]{Remark}
\newtheorem{assumption}[theorem]{Assumption}

\theoremstyle{thmstylethree}
\newtheorem{definition}[theorem]{Definition}

\usepackage[noend]{algpseudocode}
\usepackage{algorithm}
\usepackage{algorithmicx}

\algnewcommand{\Input}[1]{
\State\textbf{Input:}\hspace*{0.3em}\parbox[t]{.9\linewidth}{\raggedright #1}
}
\algnewcommand{\Initialization}[1]{
\State\textbf{Initialization:}\hspace*{0.3em}\parbox[t]{.8\linewidth}{\raggedright #1}
}

\title{Approximate Bregman Proximal Gradient Algorithm for  Relatively Smooth Nonconvex Optimization}

\author*[1]{\fnm{Takahashi} \sur{Shota}}\email{shota@mist.i.u-tokyo.ac.jp}

\author[1,2]{\fnm{Takeda} \sur{Akiko}}\email{takeda@mist.i.u-tokyo.ac.jp}

\affil[1]{\orgdiv{Graduate School of Information Science and Technology}, \orgname{The University of Tokyo}, \orgaddress{\street{7--3--1 Hongo}, \city{Bunkyo-ku}, \postcode{113--8656}, \state{Tokyo}, \country{Japan}}}

\affil[2]{\orgdiv{Center for Advanced Intelligence Project}, \orgname{RIKEN}, \orgaddress{\street{1--4--1 Nihonbashi}, \city{Chuo-ku}, \postcode{103--0027}, \state{Tokyo}, \country{Japan}}}

\date{\today}

\begin{document}

\abstract{
In this paper, we propose the approximate Bregman proximal gradient algorithm (ABPG) for solving composite nonconvex optimization problems. 
ABPG employs a new distance that approximates the Bregman distance, making the subproblem of ABPG simpler to solve compared to existing Bregman-type algorithms. 
The subproblem of ABPG is often expressed in a closed form.
Similarly to existing Bregman-type algorithms, ABPG does not require the global Lipschitz continuity for the gradient of the smooth part.
Instead, assuming the smooth adaptable property, we establish the global subsequential convergence under standard assumptions.
Additionally, assuming that the Kurdyka--\L ojasiewicz property holds, we prove the global convergence for a special case.
Our numerical experiments on the $\ell_p$ regularized least squares problem, the $\ell_p$ loss problem, and the nonnegative linear system show that ABPG outperforms existing algorithms especially when the gradient of the smooth part is not globally Lipschitz or even locally Lipschitz continuous.}

\keywords{
composite nonconvex nonsmooth optimization, Bregman proximal gradient algorithms, Kurdyka--\L ojasiewicz property, $\ell_p$ regularized least squares problem, $\ell_p$ loss problem, nonnegative linear system, Kullback--Leibler divergence
}

\pacs[MSC Classification]{90C26, 49M37, 65K05}

\maketitle

\section{Introduction}
In applications to machine learning and signal processing, regularization is used to avoid over-fitting or to impose the model structure.
An optimization problem with a regularization term can be formulated as a composite nonconvex optimization problem:
\begin{align}
    \label{prob:minimization}
    \min_{\x\in\cl{C}}\quad \Psi(\x) := f(\x) + g(\x),
\end{align}
where $f:\R^n\to\extendedleft$ is a continuously differentiable function, $g:\R^n\to\extendedleft$ is a convex function, and $\cl{C}$ denotes the closure of $C\subset\R^n$ that is a nonempty open convex set (see Assumption~\ref{assu:function} and Remark~\ref{remark:effective-domain} for a more precise statement).
In the context of machine learning or signal processing, $g$ is called a regularizer or a penalty function.
Problem~\eqref{prob:minimization} includes many applications, such as a maximum a posteriori probability (MAP) estimate~\cite{Bouman1993-ty,Elad2010-hk}, ridge regression~\cite{Hastie2009-sb}, the least absolute shrinkage and selection operator (LASSO)~\cite{Tibshirani1996-we}, phase retrieval~\cite{Bolte2018-zt,Takahashi2022-ml}, and blind deconvolution~\cite{Chan2000-os,Li2019-ab,Takahashi2023-uh}.

The most well-known algorithm for solving~\eqref{prob:minimization} would be a proximal algorithm.
The proximal algorithm uses the proximal mapping for its updates, that can be easily computed for a certain $g$ (see, for example,~\cite{Beck2017-qc}) in a closed-form expression.
The proximal algorithm has been actively studied since the 1970s (see, e.g., \cite{Rockafellar1976-th,Fukushima1981-wn});
The proximal algorithm includes many algorithms such as the proximal gradient method~\cite{Bruck1975-et,Lions1979-id,Passty1979-sk}, the fast iterative shrinkage-thresholding algorithm (FISTA)~\cite{Beck2009-kr}, the proximal Newton method~\cite{Patriksson1999-yp}, and the proximal quasi-Newton method~\cite{Becker2012-ji,Lee2014-zv}.
On the other hand, these algorithms require global Lipschitz continuity for $\nabla f$, \ie, there exists $L>0$ such that $\|\nabla f(\x)- \nabla f(\y)\|\leq L\|\x-\y\|$ for any $\x,\y\in\R^n$.
This property is sometimes restrictive, and
in some applications, the global Lipschitz continuity does not hold.

Bolte \etal~\cite{Bolte2018-zt} proposed the Bregman proximal gradient algorithm (BPG)\footnote{The Bregman gradient method was first introduced for the case of $g\equiv0$ as the mirror descent~\cite{Nemirovski1983-yw}. The main motivation of developing the method is perhaps to simplify the computation of the projection operator rather than to weaken the assumption of global Lipschitz continuity. For example, the mirror descent is used for minimizing the Kullback--Leibler divergence over the unit simplex~\cite[Example 9.10]{Beck2017-qc}.}, which does not require the global Lipschitz continuity for $\nabla f$:
\begin{align}
    \x^{k+1} = \argmin_{\x\in\cl{C}}\left\{\langle\nabla f(\x^k),\x-\x^k\rangle+g(\x)+\frac{1}{\lambda}D_\phi(\x,\x^k)\right\},\label{subprob:bpg}
\end{align}
where $\lambda > 0$ and $D_\phi(\x,\y) = \phi(\x) - \phi(\y) - \langle\nabla\phi(\y),\x-\y\rangle$ is the Bregman distance~\cite{Bregman1967-rf} with a convex and continuously differentiable function $\phi:\R^n\to(-\infty,+\infty]$.
The method has been refined and extended from various perspectives, as described below.
Ding \etal~\cite{Ding2023-jf} proposed the stochastic BPG and applied it to deep learning problems.
Gao \etal~\cite{Gao2023-ri} proposed an alternating version of BPG and its acceleration version.
Hanzely \etal~\cite{Hanzely2021-sz} proposed an accelerated BPG for convex optimization problems.
Takahashi \etal~\cite{Takahashi2022-ml} proposed a Bregman proximal gradient algorithm exploiting the difference of convex functions (DC) structure and its acceleration version for nonconvex optimization problems.

BPG globally converges to a stationary point (also called a critical point) under the smooth adaptable property (so-called $L$-smad)~\cite{Bolte2018-zt} for $(f,\phi)$ (see Definition~\ref{def:l-smad}), also called relative smoothness~\cite{Lu2018-ii}.
The smooth adaptable property is more permissive than the global Lipschitz continuity.
From the viewpoint of the smooth adaptable property,~\eqref{subprob:bpg} is majorized by the upper approximation of $f$ with $\phi$.
While Subproblem~\eqref{subprob:bpg} is expressed in a closed form when $g$ and $\phi$ have simple structures, such a combination of $g$ and $\phi$ is limited to some examples, e.g.,~\cite{Bauschke2017-hg,Bolte2018-zt,Dragomir2021-rv,Takahashi2023-uh}.

One of the weaknesses of BPG is that Subproblem~\eqref{subprob:bpg} is not easily solved for a general $\phi$.
Even if $g\equiv0$, it is not clear whether the subproblem of BPG has a closed-form solution.
In fact, Subproblem~\eqref{subprob:bpg} reduces to $\min_{\x}\phi(\x) + \langle\lambda\nabla f(\x^k) - \nabla\phi(\x^k),\x\rangle$, which often have no closed-form solutions (for example, $\phi(\x) = \frac{1}{p}\|\x\|_p^p$ with $1 < p\in\R$).
To resolve this issue, assuming that $\phi$ is $\mathcal{C}^2$, we define the approximate Bregman distance $\tilde{D}_\phi$, given by
\begin{align*}
    \tilde{D}_\phi(\x,\y) := \frac{1}{2}\langle\nabla^2\phi(\y)(\x-\y),\x - \y\rangle\simeq D_\phi(\x,\y),
\end{align*}
which is the second-order approximation of $\phi(\x)$ around $\y$ in $D_\phi(\x,\y)$.
Using $\tilde{D}_\phi$ instead of $D_\phi$, we propose a new algorithm, named the approximate Bregman proximal gradient algorithm (ABPG).
Because the subproblem of ABPG can be formulated as a quadratic formula, it is easier to solve than~\eqref{subprob:bpg}.
When $\phi$ is strongly convex and $g\equiv0$, the subproblem of ABPG always has a closed-form solution given by $\x^{k+1} = \x^k - \frac{1}{\lambda}\nabla^2\phi(\x^k)^{-1}\nabla f(\x^k)$.
When $\phi$ is separable, \ie, $\nabla^2\phi(\x^k)$ is diagonal, and $g$ is prox-friendly, the subproblem of ABPG reduces to the proximal calculus (see also Section~\ref{sec:numerical-experiments} and Remark~\ref{remark:lp} for specific examples).
Table~\ref{tab:example-abpg-subprob} summarizes these facts.

\begin{table}[t!]
    \centering
    \caption{ABPG has a closed-form solution in several settings when $t_k = 1$ (see Algorithm~\ref{alg:abpg}).
    Let $f_0$ be a continuously differentiable function with Lipschitz continuous gradients, $\phi_0$ be a continuously differentiable convex function, and $\prox_g$ be a proximal mapping of $g$.
    Let $v_i$ be the $i$th element of $\nabla f(\x^k)$ and $s_i$ be the $(i,i)$ element of $\nabla^2\phi(\x^k)^{-1}$.
    The Kullback--Leibler divergence $D_{\textup{KL}}(\cdot,\cdot)$ will be defined by~\eqref{def:kl-divergence}.
    Let $\A\in\R_+^{m\times n}, \b\in\R_+^m, \W\in\R_+^{m \times r}, \H\in\R_+^{r\times n}$, and $\B\in\R_+^{m\times n}$.
    See also Section~\ref{sec:numerical-experiments}.}
    \begingroup
    \renewcommand{\arraystretch}{1.4}
    \begin{tabular}{c|c|l}
        Reference & $(f, \phi)$ & Calculus of the subproblem\\\hline
        Sects.~\ref{sec:the-lp-regularized-least-squares-problem} \&~\ref{sec:the-lp-regularized-least-squares-problem-with-constraints} & $f = f_0 + \theta\phi_0$, $\phi = \phi_0 + \frac{1}{2}\|\cdot\|^2$ with a separable $\phi_0$ & \multirow{2}{*}{For separable $g(\x) = \sum_{i}g_i(x_i)$,}
        \\ 
        Sect.~\ref{sec:nonnegative-linear-system},~\cite{Bauschke2017-hg}& $f(\x) = D_{\textup{KL}}(\A\x,\b)$, $\phi(\x) = \sum_{i}x_i\log x_i + \frac{1}{2}\|\x\|^2$ & \\ 
        \cite{Bauschke2017-hg}& $f(\x) = D_{\textup{KL}}(\b,\A\x)$, $\phi(\x) = -\sum_{i}\log x_i$ & \multirow{2}{*}{$\x^{k+1} = \left(\prox_{\lambda s_i g_i}(x^k_i - s_iv_i)\right)_{i=1}^n$}\\
        \cite{Takahashi2024-cu}\tablefootnote{ABPG can be applied to the auxiliary function $\hat{f}_k$ defined in~\cite{Takahashi2024-cu}.}& $f(\W,\H) = D_{\textup{KL}}(\B,\W\H)$, $\phi(\x) = -\sum_{i}\log x_i$ & \\ \hline
        Sect.~\ref{sec:lp-loss} & Any $(f, \phi)$ is $L$-smad, $\phi(\x)$ is strongly convex. & $g\equiv0 \implies$ \\ 
        & & $\x^{k+1} = \x^k - \frac{1}{\lambda}\nabla^2\phi(\x^k)^{-1}\nabla f(\x^k)$ \\ \hline
    \end{tabular}
    \endgroup
    \label{tab:example-abpg-subprob}
\end{table}

\begin{table}[!t]
    \centering
    \caption{ABPG corresponds to existing algorithms in setting some $\nabla^2 \phi(\x^k)$.
    In this table, $\kappa_k$ is a positive scalar, and $\Q_k$ is a symmetric positive definite matrix at the $k$th iteration.
    Note that $f$ is required to be convex for some algorithms.}
    \begingroup
    \renewcommand{\arraystretch}{1.4}
    \begin{tabular}{c|c|c}
        $\nabla^2\phi(\x^k)$ & Algorithm & Algorithm ($g\equiv0$) \\\hline
        $\nabla^2\phi(\x^k) = \I$ & Proximal gradient algorithm~\cite{Bruck1975-et,Lions1979-id,Passty1979-sk} & Gradient descent algorithm\\
        $\nabla^2\phi(\x^k) = \nabla^2 f(\x^k)$ & Proximal Newton method~\cite{Patriksson1999-yp} & Newton method\\
        $\nabla^2\phi(\x^k) = \nabla^2 f(\x^k)+\frac{\kappa_k}{2}\I$ & Regularized proximal Newton method~\cite{Yue2019-da} & Regularized Newton method~\cite{Li2004-hi}\\
        $\nabla^2\phi(\x^k) =\Q_k$ & Proximal quasi-Newton method~\cite{Becker2012-ji,Lee2014-zv} & Quasi-Newton method \\ \hline
    \end{tabular}
    \endgroup
    \label{tab:abpg-existing}
\end{table}

Similarly to BPG, we can analyze the convergence results of ABPG under the smooth adaptable property.
ABPG uses the line search procedure to ensure the accuracy of the approximate Bregman distance $\tilde{D}_\phi(\x,\y)$.
We have proven that the line search procedure is well-defined under the smooth adaptable property.
We have established the global subsequential convergence under standard assumptions.
We have also established the global convergence for the special case $g\equiv0$ under the Kurdyka--\L ojasiewicz (KL) property.

ABPG includes some existing algorithms depending on the choice of $\phi$.
We summarize this in Table~\ref{tab:abpg-existing} (also includes the case $g\equiv0$).
When $\phi(\x) = \frac{1}{2}\|\x\|^2$, \ie~$\nabla^2\phi(\x) = \I$, ABPG is equivalent to the proximal gradient method.
Assuming that $f$ is convex and $\mathcal{C}^2$, when $\phi(\x) = f(\x)$, ABPG is equivalent to the proximal Newton method, and when $\phi(\x) = f(\x) + \frac{\kappa_k}{2}\|\x\|^2$, \ie~$\nabla^2\phi(\x^k) = \nabla f(\x^k) + \kappa_k\I$ for $\kappa_k > 0$, ABPG is equivalent to the regularized proximal Newton method~\cite{Yue2019-da} with a parameter $\kappa_k > 0$.
When $\nabla^2\phi(\x^k) = \Q_k$ with symmetric positive definite matrices $\Q_k\in\R^{n \times n}$ at the $k$th iteration, ABPG is equivalent to the proximal quasi-Newton method.
ABPG corresponds to these existing algorithms with the above $\nabla^2\phi$.
The convergence properties of these algorithms are often investigated under the assumption of the global Lipschitz continuity for $\nabla f$, but this paper gives convergence guarantees under the weaker assumption, $L$-smad property.

In numerical experiments, we have applied ABPG to the convex $\ell_p$ regularized least squares problem~\cite{Chung2019-kg,Wen2016-fg}, the $\ell_p$ loss problem~\cite{Maddison2021-nt}, and the nonnegative linear system~\cite{Bauschke2017-hg}.
In the $\ell_p$ regularized least squares problem, $f$ includes the $p$th power of the $\ell_p$ norm term with $p > 1$.
Its gradient for a general $p$ is not globally Lipschitz continuous, and it is also not locally Lipschitz continuous at $\x \in (-1,1)^n$ for $p \in(1,2)$.
Moreover, when $\phi$ includes the $\ell_p$ norm in this case, the subproblem of BPG is the minimization of a $p$th degree polynomial function.
While the subproblem of BPG has no closed-form solutions, that of ABPG has a closed-form solution even if $g \not\equiv0$.
We also have compared ABPG with BPG on the application to the nonnegative linear system.

This paper is organized as follows.
Section~\ref{sec:preliminaries} summarizes key notations such as subdifferentials, the Bregman distance, the smooth adaptable property, and the KL property.
These mathematical tools are necessary to tackle nonconvex optimization problems.
Section~\ref{sec:proposed-algorithm} proposes the approximate Bregman proximal gradient algorithm, which does not require the global Lipschitz continuity for $\nabla f$.
Section~\ref{sec:convergence-analysis} establishes the properties and global convergence of our proposed algorithm.
We proved the global subsequential convergence under standard assumptions and the global convergence when $g\equiv0$.
Section~\ref{sec:numerical-experiments} shows applications to the $\ell_p$ regularized least squares problem, the $\ell_p$ loss problem, and the nonnegative linear system.

\paragraph*{Notation}
In what follows, we use the following notations.
Let $\R, \R_+$, and $\R_{++}$ be the set of real numbers, nonnegative real numbers, and positive real numbers, respectively.
Let $\R^n, \R^n_+$, and $\R_{++}^n$ be the real space of $n$ dimension, the nonnegative orthant of $\R^n$, and the positive orthant of $\R^n$, respectively.
Let $\R^{n \times m}$ be the set of $n \times m$ real matrices.
Vectors and matrices are shown in boldface.
The all one vector is $\bm{1}\in \R^n$, the all zero vector is $\bm{0}\in \R^n$, and the identity matrix is $\bm{I}\in \R^{n\times n}$.
Let $|\bm{x}|$ and $\bm{x}^p$ be the elementwise absolute and $p$th power vectors of $\bm{x}\in\R^n$, respectively.
Given a real number $p\geq 1$, the $\ell_p$ norm is defined by $\|\bm{x}\|_p = (\sum_{i=1}^n|x_i|^p)^{1/p}$.
Let $\lambda_{\max}(\bm{M})$ be the largest eigenvalue of a symmetric matrix $\bm{M}\in\R^{n \times n}$.

Let $\interior{C}$ and $\cl{C}$ be the interior and the closure of a set $C\subset\R^n$, respectively.
We also define the distance from a point $\bm{x}\in\R^n$ to $C$ by $\dist(\bm{x}, C) := \inf_{\bm{y}\in C}\|\bm{x}-\bm{y}\|$.
The indicator function $\delta_C$ is defined by $\delta_C(\x) = 0$ for $\x\in C$ and $\delta_C(\x) = +\infty$ otherwise.
The sign function $\sgn(x)$ is defined by $\sgn(x) = -1$ for $x < 0$,  $\sgn(x) = 0$ for $x = 0$, and $\sgn(x) = 1$ for $x > 0$.

\section{Preliminaries}\label{sec:preliminaries}

\subsection{Subdifferentials}
We first introduce the definitions of subdifferentials.
For an extended-real-valued function $f:\R^n\to\extended$, the effective domain of $f$ is defined by $\dom f := \{\x\in\R^n \mid f(\x) < +\infty \}$.
The function $f$ is said to be proper if $f(\x) > -\infty$ for all $\x\in\R^n$ and $\dom f \neq \emptyset$.
\begin{definition}[{Subdifferentials~\cite[Definition 8.3]{Rockafellar1997-zb}}]
    \label{def:limiting-subdiff}
    Let $f:\R^n\to\extendedleft$ be a proper and lower semicontinuous function.
    \begin{enumerate}
        \item The regular subdifferential of $f$ at $\x\in\dom f$ is defined by
        \begin{align*}
            \hat{\partial} f(\x) = \left\{\bm{\xi}\in\R^n \ \middle|\ \liminf_{\y\to \x,\, \y\neq \x}\frac{f(\y) - f(\x) - \langle \bm{\xi}, \y - \x \rangle}{\|\x - \y\|}\geq 0\right\}.
        \end{align*}
        When $\x\notin\dom\phi$, we set $\hat{\partial}f(\x) = \emptyset$.
        \item The limiting subdifferential of $f$ at $\x\in\R^n$ is defined by
        \begin{align*}
            \partial f(\x) = \left\{\bm{\xi}\in\R^n \ \middle|\ \exists \x^k \xrightarrow{f} \x,\ \bm{\xi}^k\to\bm{\xi}\ \mathrm{such\ that}\ \bm{\xi}^k\in\hat{\partial}f(\x^k)\ \mathrm{for\ all}\ k \right\},
        \end{align*}
        where $\x^k \xrightarrow{f} \x$ means $\x^k \to \x$ and $f(\x^k)\to f(\x)$.
    \end{enumerate}
\end{definition}
In general, $\hat{\partial}f(\x)\subset\partial f(\x)$ holds for any $\x\in\R^n$~\cite[Theorem 8.6]{Rockafellar1997-zb}.
See also~\cite[Example 1.31]{Mordukhovich2018-sk} and~\cite{Rockafellar1997-zb} for examples of subdifferential calculus.
We also define $\dom\partial f := \{\x\in\R^n\mid\partial f(\x)\neq\emptyset\}$.
Note that when $f$ is convex, the limiting subdifferential coincides with the (classical) subdifferential~\cite[Proposition 8.12]{Rockafellar1997-zb}.

\subsection{Bregman Distances}
Let $C$ be a nonempty open convex subset of $\R^n$.
We introduce the kernel generating distance~\cite{Bolte2018-zt} and the Bregman distance.
\begin{definition}[Kernel generating distance~{\cite[Definition 2.1]{Bolte2018-zt}}]
    Associated with $C$, a function $\phi:\R^n\to\extendedleft$ is called a kernel generating distance if it satisfies the following conditions:
    \begin{enumerate}
        \item $\phi$ is proper, lower semicontinuous, and convex, with $\dom\phi\subset\cl{C}$ and $\dom\partial\phi = C$.
        \item $\phi$ is $\mathcal{C}^1$ on $\interior\dom\phi \equiv C$.
    \end{enumerate}
    We denote the class of kernel generating distances associated with $C$ by $\mathcal{G}(C)$.
\end{definition}
\begin{definition}[Bregman distance~\cite{Bregman1967-rf}]
    Given a kernel generating distance $\phi\in\mathcal{G}(C)$, a Bregman distance $D_\phi:\dom\phi\times\interior\dom\phi\to\R_+$ associated with $\phi$ is defined by
    \begin{align}
        D_\phi(\x,\y) = \phi(\x) - \phi(\y) - \langle\nabla\phi(\y),\x-\y\rangle.
        \label{BregmanDist}
    \end{align}
\end{definition}
Note that the Bregman distance $D_\phi$ is not a distance because it does not satisfy the symmetry and the triangle inequality.
Due to the convexity of $\phi$, it holds that $D_\phi(\x,\y)\geq0$ for any $\x\in\dom\phi$ and $\y\in\interior\dom\phi$.
In addition, when $\phi$ is strictly convex, $D_\phi(\x,\y)=0$ holds if and only if $\x = \y$.
\begin{example}
    We show some famous examples of Bregman distances:
    \begin{itemize}
        \item Squared Euclidean distance: Let $\phi(\x) = \frac{1}{2}\|\x\|^2$ and $\dom\phi = \R^n$. 
        Then, we have the squared Euclidean distance $D_\phi(\x,\y)=\frac{1}{2}\|\x - \y\|^2$.
        \item Kullback--Leibler divergence~\textup{\cite{Kullback1951-yv}}: The Boltzmann--Shannon entropy $\phi(\x) = \sum_{i=1}^nx_i\log x_i$ with $0\log 0 = 0$ and $\dom\phi = \R_+^n$.
        Then, we obtain the Kullback--Leibler divergence $D_\phi(\x,\y)=\sum_{i=1}^n x_i\log \frac{x_i}{y_i}$ on the unit simplex $\{\x\in\R_+^n\mid\sum_{i=1}^n x_i = 1\}$.
        \item Itakura--Saito divergence~\textup{\cite{Itakura1968-en}}: The Burg entropy $\phi(\x) = -\sum_{i=1}^n\log x_i$ and $\dom\phi = \R^n_{++}$.
        Then, we obtain the Itakura--Saito divergence $D_\phi(\x,\y)=\sum_{i=1}^n \left(\frac{x_i}{y_i} - \log\frac{x_i}{y_i} - 1\right)$.
    \end{itemize}
\end{example}
See~\cite{Bauschke2017-hg,Bauschke1997-vk,Lu2018-ii} and~\cite[Table 2.1]{Dhillon2008-zz} for more examples.
\subsection{Smooth Adaptable Property}
Now let us define the smooth adaptable property~\cite{Bolte2018-zt}, also called relative smoothness~\cite{Lu2018-ii}.
\begin{definition}[$L$-smooth adaptable property~{\cite[Definition 2.2]{Bolte2018-zt}}]\label{def:l-smad}
    Consider a pair of functions $(f, \phi)$ satisfying the following conditions:
    \begin{enumerate}
        \item $\phi\in\mathcal{G}(C)$,
        \item $f:\R^n\to\extendedleft$ is a proper and lower semicontinuous function with $\dom\phi\subset\dom f$, which is $\mathcal{C}^1$ on $C = \interior\dom\phi$.
    \end{enumerate}
    The pair $(f,\phi)$ is called $L$-smooth adaptable ($L$-smad) on $C$ if there exists $L > 0$ such that $L\phi - f$ and $L\phi + f$ are convex on $C$.
\end{definition}
The smooth adaptable property for the pair $(f,\phi)$ implies that $f$ is globally majorized by $\phi$ with some constant $L > 0$.
The smooth adaptable property induces a useful lemma.
\begin{lemma}[Extended descent lemma~{\cite[Lemma 2.1]{Bolte2018-zt}}]
    \label{lemma:extended-descent}
    The pair of functions $(f, \phi)$ is $L$-smad on $C$ if and only if
    \begin{align*}
        |f(\x) - f(\y) - \langle\nabla f(\y),\x-\y\rangle|\leq LD_\phi(\x,\y), \quad \forall\x,\y\in\interior\dom\phi.
    \end{align*}
\end{lemma}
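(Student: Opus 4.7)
The plan is to derive the equivalence directly from the first-order characterization of convexity for $\mathcal{C}^1$ functions. Since $\phi \in \mathcal{G}(C)$ is $\mathcal{C}^1$ on $C = \interior\dom\phi$ and $f$ is assumed to be $\mathcal{C}^1$ on $C$, both auxiliary functions $L\phi - f$ and $L\phi + f$ are $\mathcal{C}^1$ on the open convex set $C$. I would begin by recalling the standard fact that, for a $\mathcal{C}^1$ function $h$ on an open convex set $C$, convexity is equivalent to
\begin{align*}
h(\bx) \geq h(\by) + \langle \nabla h(\by), \bx - \by \rangle \quad \text{for all } \bx,\by \in C.
\end{align*}

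Next I would apply this characterization to $h = L\phi - f$. Writing out the inequality and rearranging, the $\phi$-terms collect into $LD_\phi(\bx,\by)$ using \eqref{BregmanDist}, and the $f$-terms become the signed linearization residual of $f$, producing
\begin{align*}
f(\bx) - f(\by) - \langle \nabla f(\by), \bx - \by \rangle \leq L D_\phi(\bx,\by).
\end{align*}
Doing the same with $h = L\phi + f$ yields the opposite one-sided bound
\begin{align*}
-\bigl(f(\bx) - f(\by) - \langle \nabla f(\by), \bx - \by \rangle\bigr) \leq L D_\phi(\bx,\by).
\end{align*}
Combining these two inequalities gives exactly the absolute-value estimate claimed. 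For the converse, I would read the derivation backwards: the absolute-value bound provides both one-sided inequalities, and each is precisely the first-order convexity condition for $L\phi - f$ and $L\phi + f$, respectively.

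I do not expect a serious obstacle here; the argument is essentially an algebraic rearrangement once one invokes the first-order convexity characterization. The only point that deserves care is regularity: one must confirm that $L\phi \pm f$ are genuinely $\mathcal{C}^1$ on the open convex set $C$ so that the first-order characterization applies pointwise for all $\bx, \by \in C$. This follows immediately from the two regularity hypotheses bundled into Definition~\ref{def:l-smad}, so the proof reduces to the two-line rearrangement described above.
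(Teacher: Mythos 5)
Your argument is correct and is exactly the standard proof of this lemma: the paper itself does not prove it but cites \cite[Lemma 2.1]{Bolte2018-zt}, where the same two applications of the first-order convexity characterization to $L\phi - f$ and $L\phi + f$ yield the two one-sided bounds whose conjunction is the absolute-value estimate. Your attention to the $\mathcal{C}^1$ regularity of $L\phi \pm f$ on the open convex set $C$ is the right (and only) point requiring care, and it is indeed guaranteed by Definition~\ref{def:l-smad}.
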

Recalling $\phi(\x)=\frac{1}{2}\|\x\|^2$ on $C = \R^n$, Lemma~\ref{lemma:extended-descent} coincides with the classical descent lemma, \ie, $\nabla f$ is Lipschitz continuous.

\subsection{Kurdyka--\L ojasiewicz Property}
The Kurdyka--\L ojasiewicz (KL) property plays a central role in global convergence.
Attouch~\etal~\cite{Attouch2010-nr} extended the \L ojasiewicz gradient inequality~\cite{Kurdyka1998-zd,Lojasiewicz1963-sm} to nonsmooth functions.

Given $\eta>0$, let $\Xi_{\eta}$ denote the set of all continuous concave functions $\psi:[0, \eta) \to \R_+$ that are $\mathcal{C}^1$ on $(0, \eta)$ with positive derivatives and which satisfy $\psi(0) = 0$.
Here, we define the Kurdyka--\L ojasiewicz property.
\begin{definition}[Kurdyka--\L ojasiewicz property~{\cite[Definition 7]{Attouch2010-nr}}]
    \label{def:kl}
    Let $f: \R^n \to\extendedleft$ be a proper and lower semicontinuous function.
    \begin{enumerate}
        \item $f$ is said to have the Kurdyka--\L ojasiewicz (KL) property at $\hat{\x}\in\dom\partial f$ if there exist $\eta\in(0, +\infty]$, a neighborhood $U$ of $\hat{\x}$, and a function $\psi\in\Xi_{\eta}$ such that, for all
        \begin{align*}
            \x\in U \cap\{\x\in\R^n \mid f(\hat{\x})<f(\x)<f(\hat{\x})+\eta\},
        \end{align*}
        the following inequality holds:
        \begin{align}
            \label{ineq:kl}
            \psi'(f(\x) - f(\hat{\x})) \dist(\bm{0}, \partial f(\x)) \geq 1.
        \end{align}
        \item If $f$ has the KL property at each point of $\dom\partial f$, then it is called a KL function.
    \end{enumerate}
\end{definition}
Let $\psi(s) = cs^{1-\theta}$ for some $\theta\in[0,1)$ and $c > 0$.
The parameter $\theta$ is called the KL exponent, which affects the rate of convergence.
When $f$ is $\mathcal{C}^1$,~\eqref{ineq:kl} coincides with the \L ojasiewicz gradient inequality~\cite{Kurdyka1998-zd,Lojasiewicz1963-sm}, defined by
\begin{align*}
    |f(\x) - f(\hat{\x})|^{\theta}\leq c(1-\theta)\|\nabla f(\x)\|.
\end{align*}
When $f$ is $\mathcal{C}^1$, the KL exponent $\theta$ is also called the \L ojasiewicz exponent.
Li and Pong~\cite{Li2018-mf} established calculus rules of the KL exponent.

The uniformized KL property is induced from the KL property.
\begin{lemma}[Uniformized KL property~{\cite[Lemma 6]{Bolte2014-td}}]
    \label{lemma:uniformized-kl}
    Suppose that $f:\R^n\to(-\infty, +\infty]$ is a proper and lower semicontinuous function, and let $\Gamma$ be a compact set.
    If $f$ is constant on $\Gamma$ and has the KL property at each point of $\Gamma$, then there exist positive scalars $\epsilon, \eta > 0$, and $\psi\in\Xi_{\eta}$ such that
    \begin{align*}
        \psi'(f(\x) - f(\hat{\x})) \dist(\bm{0}, \partial f(\x))\geq1,
    \end{align*}
    for any $\hat{\x}\in\Gamma$ and any $\x\in\R^n$ satisfying $\dist(\x, \Gamma) < \epsilon$ and $f(\hat{\x}) < f(\x) < f(\hat{\x}) + \eta$.
\end{lemma}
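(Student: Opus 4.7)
The plan is to exploit the compactness of $\Gamma$ together with the fact that $f$ takes a single constant value on $\Gamma$, say $\zeta := f(\hat{\bx})$ for every $\hat{\bx} \in \Gamma$, so that the pointwise KL inequality at any $\hat{\bx}\in\Gamma$ depends on $\hat{\bx}$ only through that constant and through the data $(U_{\hat{\bx}}, \eta_{\hat{\bx}}, \psi_{\hat{\bx}})$ guaranteed by Definition~\ref{def:kl}.

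First I would apply the KL property pointwise: for each $\hat{\bx}\in\Gamma$ choose a radius $r_{\hat{\bx}}>0$, a threshold $\eta_{\hat{\bx}}>0$, and a function $\psi_{\hat{\bx}}\in\Xi_{\eta_{\hat{\bx}}}$ such that the inequality $\psi_{\hat{\bx}}'(f(\bx)-\zeta)\dist(\bzero,\partial f(\bx)) \geq 1$ holds on the ball $B(\hat{\bx}, r_{\hat{\bx}})$ intersected with $\{\zeta < f(\bx) < \zeta + \eta_{\hat{\bx}}\}$. Next I would cover $\Gamma$ by the open balls $B(\hat{\bx}, r_{\hat{\bx}}/2)$ and, by compactness, extract a finite subcover $\{B(\hat{\bx}_i, r_i/2)\}_{i=1}^N$. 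Setting $\epsilon := \min_{i} r_i/2$, a triangle-inequality check shows that every $\bx$ with $\dist(\bx,\Gamma)<\epsilon$ lies in some $B(\hat{\bx}_i, r_i)$: indeed, picking $\hat{\bx}\in\Gamma$ with $\|\bx-\hat{\bx}\|<\epsilon$ and the index $i$ for which $\hat{\bx}\in B(\hat{\bx}_i, r_i/2)$ gives $\|\bx-\hat{\bx}_i\| < \epsilon + r_i/2 \leq r_i$.

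The next step is to aggregate the finitely many desingularizing functions into a single one. I would set $\eta := \min_{i}\eta_i$ and $\psi := \sum_{i=1}^N \psi_i$ (each $\psi_i$ restricted to $[0,\eta)$). Since sums preserve continuity, concavity, the value $0$ at $0$, and $\mathcal{C}^1$ smoothness with strictly positive derivatives, $\psi\in\Xi_\eta$. The crucial observation is that $\psi'(s) = \sum_{i=1}^N \psi_i'(s) \geq \psi_j'(s)$ for every $j$ and every $s\in(0,\eta)$, because each $\psi_i'$ is strictly positive.

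Combining these ingredients finishes the proof: for any $\hat{\bx}\in\Gamma$ and any $\bx$ with $\dist(\bx,\Gamma)<\epsilon$ and $f(\hat{\bx})<f(\bx)<f(\hat{\bx})+\eta$, pick an index $i$ with $\bx\in B(\hat{\bx}_i, r_i)\subset U_i$ and note that $\zeta < f(\bx) < \zeta + \eta \leq \zeta + \eta_i$; the local KL inequality gives $\psi_i'(f(\bx)-\zeta)\dist(\bzero,\partial f(\bx))\geq 1$, and dominating $\psi_i'$ by $\psi'$ yields the claimed uniform estimate. The main subtlety I anticipate is the choice of $\epsilon$: since we only require $\bx$ to be near $\Gamma$, not near a particular $\hat{\bx}_i$, some inflation of the covering is needed, and the halving trick on the balls handles this cleanly while keeping the combined $\psi$ independent of both $\bx$ and $\hat{\bx}$.
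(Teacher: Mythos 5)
Your proof is correct. The paper does not prove this lemma itself---it is imported directly from \cite{Bolte2014-td}---and your argument (pointwise KL data at each $\hat{\bx}\in\Gamma$, a finite subcover by half-radius balls, $\epsilon$ taken as the smallest half-radius so that every $\bx$ near $\Gamma$ lands in some full ball, and the aggregated desingularizer $\psi=\sum_{i}\psi_i$ whose derivative dominates each $\psi_i'$) is precisely the standard proof of Lemma~6 in that reference, with the constancy of $f$ on $\Gamma$ used exactly where needed to decouple the sublevel condition from the particular $\hat{\bx}$.
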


\section{Proposed Algorithm}\label{sec:proposed-algorithm}
Throughout this paper, we make the following assumptions.
\begin{assumption}\
\label{assu:function}
\begin{enumerate}
    \item $\phi\in\mathcal{G}(C)$ with $\cl{C} = \cl{\dom\phi}$ is $\mathcal{C}^2$ on $C = \interior\dom\phi$.
    \item $f:\R^n\to\extendedleft$ is a proper and lower semicontinuous function with $\dom \phi \subset\dom f$, which is $\mathcal{C}^1$ on $C$.
    \item $g:\R^n\to\extendedleft$ is a proper and convex function with $\dom g\cap C \neq\emptyset$.
    \item $\Psi^* := \inf_{\x\in\cl{C}}\Psi(\x) > -\infty$.
    \item For any $\x\in\interior\dom\phi$ and $\lambda > 0$, $\lambda g(\u) + \frac{1}{2}\langle\nabla^2\phi(\x)(\u - \x), \u - \x\rangle$ is supercoercive, that is,
    \begin{align*}
        \lim_{\|\u\|\to\infty}\frac{\lambda g(\u) + \frac{1}{2}\langle\nabla^2\phi(\x)(\u - \x), \u - \x\rangle}{\|\u\|} = \infty.
    \end{align*}
\end{enumerate}
\end{assumption}

Assumption~\ref{assu:function}(i-iv) are standard for Bregman-type algorithms~\cite{Bolte2018-zt,Ding2023-jf,Takahashi2022-ml} and satisfied in practice.
Assumption~\ref{assu:function}(v) is a coercive condition, which is used for the well-posedness of the ABPG mapping (Lemma~\ref{lemma:well-posedness-abpg}).
If $\phi$ is strongly convex, then Assumption~\ref{assu:function}(v) holds~\cite[Corollary 11.17]{Bauschke2017-jp}.
\begin{remark}
    \label{remark:effective-domain}
    In Problem~\eqref{prob:minimization}, $\cl{C}$ is more of the effective domain of $\phi$ than a constraint.
    We use $C = \interior\dom\phi$ to guarantee that points are in the effective domain of $\phi$.
    Therefore, $C$ depends on the choice of $\phi$.
    In Sections~\textup{\ref{subsec:global-subsequential-convergence}} and~\textup{\ref{subsec:global-convergence}}, we use $C\equiv\R^n$ following~\textup{\cite{Bolte2018-zt}}.
\end{remark}

\subsection{Approximate Bregman Proximal Gradient Algorithm}\label{subsec:abpg}
The Bregman proximal gradient mapping~\cite{Bolte2018-zt} at $\x \in C$ is defined by
\begin{align}
    \mathcal{T}_\lambda(\x) :=& \argmin_{\u\in\cl{C}}\left\{\langle\nabla f(\x), \u - \x\rangle + g(\u) + \frac{1}{\lambda}D_\phi(\u,\x)\right\},\label{def:bpg-mappring}
\end{align}
where $\lambda > 0$ is called a stepsize.
For general $\phi$ and $g$, it is difficult to calculate $\mathcal{T}_\lambda(\x)$ in a closed form.
Using Assumption~\ref{assu:function}(i), we obtain the second-order approximation to $\phi(\u)$ around $\x$, given by
\begin{align*}
    D_\phi(\u,\x) &= \phi(\u) - \phi(\x) - \langle\nabla\phi(\x),\u - \x\rangle\\
    &\simeq\phi(\x) + \langle\nabla\phi(\x),\u-\x\rangle+\frac{1}{2}\langle\nabla^2\phi(\x)(\u-\x),\u - \x\rangle\\
    &\quad- \phi(\x) - \langle\nabla\phi(\x),\u - \x\rangle\\
    &=\frac{1}{2}\langle\nabla^2\phi(\x)(\u-\x),\u - \x\rangle =: \tilde{D}_\phi(\u,\x).
\end{align*}
We call $\tilde{D}_\phi(\u,\x)\geq0$ the (second-order) approximate Bregman distance.
Note that it does not always hold that $D_\phi(\u,\x) \leq \tilde{D}_\phi(\u,\x)$ or $D_\phi(\u,\x) \geq \tilde{D}_\phi(\u,\x)$ for any $\u,\x\in\R^n$.
Because of this, our algorithm shown later needs a line search procedure.
We consider, instead of~\eqref{def:bpg-mappring}, the approximate Bregman proximal gradient mapping at $\x \in C$, defined by
\begin{align}
    \tilde{\mathcal{T}}_\lambda(\x) :=& \argmin_{\u\in\cl{C}}\left\{\langle\nabla f(\x), \u - \x\rangle + g(\u) + \frac{1}{\lambda}\tilde{D}_\phi(\u,\x)\right\}.\label{def:abpg-mappring}
\end{align}
Using Assumption~\ref{assu:function}(iii) and the positive semidefiniteness of $\nabla^2\phi(\x)$,~\eqref{def:abpg-mappring} is a convex optimization problem.
In addition, $\tilde{\mathcal{T}}_\lambda(\x)$ is a singleton when $\phi$ is strongly convex.
If $\phi$ is not strongly convex, it is enough to use $\phi_0 := \phi + \frac{\sigma}{2}\|\cdot\|^2$ with $\sigma > 0$ as a new kernel generating distance.
Setting $\phi = \frac{1}{2}\|\cdot\|^2$, \ie, $\nabla^2\phi(\x) = \I$, $\tilde{\mathcal{T}}_\lambda(\x)$ is equivalent to a proximal mapping.
Additionally, we make the following assumption.
\begin{assumption}
    \label{assu:abpg-mapping}
    For any $\x \in C$ and $\lambda > 0$, we have $\tilde{\mathcal{T}}_\lambda(\x) \subset C$.
\end{assumption}
Assumption~\ref{assu:abpg-mapping} guarantees that a sequence generated by ABPG is in $C\equiv\interior\dom\phi$ (see also Remark~\ref{remark:effective-domain}).
Assumption~\ref{assu:abpg-mapping} holds when $C\equiv\R^n$.
In the same discussion as~\cite[p.2136]{Bolte2018-zt}, another approach to warrant Assumption~\ref{assu:abpg-mapping} is to consider extending the prox-boundedness~\cite[Definition 1.23]{Rockafellar1997-zb} to the approximate Bregman proximal gradient mapping and its envelope.
Moreover, if $\phi$ is strongly convex, an envelope function $\inf_{\u\in\cl{C}}\left\{\langle\nabla f(\x), \u - \x\rangle + g(\u) + \frac{1}{\lambda}\tilde{D}_\phi(\u,\x)\right\}$ is bounded from below, which implies the prox-boundedness from~\cite[Exercise 1.24]{Rockafellar1997-zb}.
More discussions for Assumption~\ref{assu:abpg-mapping} have been in~\cite{Bolte2018-zt}. 
We can prove that $\tilde{\mathcal{T}}_\lambda(\x)$ is well-posed, as shown in the following lemma, by using Assumptions~\ref{assu:function} and~\ref{assu:abpg-mapping}.
\begin{lemma}[Well-posedness of $\tilde{\mathcal{T}}_\lambda(\x)$]
    \label{lemma:well-posedness-abpg}
    Suppose that Assumptions~\textup{\ref{assu:function}} and~\textup{\ref{assu:abpg-mapping}} hold.
    Let $\x\in\interior\dom\phi$.
    Then, the mapping $\tilde{\mathcal{T}}_\lambda(\x)$ is a nonempty and compact subset of $C$ for any $\lambda > 0$.
\end{lemma}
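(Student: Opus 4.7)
The plan is to recast the subproblem as an unconstrained minimization of
$$h(\bu) := \langle\nabla f(\bx),\bu - \bx\rangle + g(\bu) + \tfrac{1}{\lambda}\tilde{D}_\phi(\bu,\bx) + \delta_{\cl{C}}(\bu)$$
over $\real^n$, so that $\tilde{\mathcal{T}}_\lambda(\bx)=\argmin_{\bu\in\real^n} h(\bu)$, and then to run the direct method: verify that $h$ is proper, lower semicontinuous, convex, and coercive, and read off nonemptiness and compactness of $\argmin h$. The inclusion in $C$ will come directly from Assumption~\ref{assu:abpg-mapping}.

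First I would check the structural properties. Since $\bx\in\interior\dom\phi$, the Hessian $\nabla^2\phi(\bx)$ is well-defined and positive semidefinite by Assumption~\ref{assu:function}(i), so $\tilde{D}_\phi(\cdot,\bx)$ is a nonnegative convex quadratic. The linear term is affine and continuous, $g$ is proper convex (and lsc, as implicitly required throughout the paper for the subdifferential machinery) by Assumption~\ref{assu:function}(iii), and $\delta_{\cl{C}}$ is proper lsc convex since $\cl{C}$ is closed and convex. Picking any $\bu_0\in\dom g\cap C$, which is nonempty by Assumption~\ref{assu:function}(iii), gives $h(\bu_0)<+\infty$, so $h$ is proper; the sum structure makes it lsc and convex.

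Next I would use Assumption~\ref{assu:function}(v) to establish coercivity. That assumption states precisely that $g(\bu)+\tfrac{1}{\lambda}\tilde{D}_\phi(\bu,\bx)$ is supercoercive. Adding the affine perturbation $\langle\nabla f(\bx),\bu-\bx\rangle$ alters the limiting ratio $h(\bu)/\|\bu\|$ by at most $\|\nabla f(\bx)\|$, which is finite because $\bx\in C$ and $f$ is $\mathcal{C}^1$ on $C$; the indicator $\delta_{\cl{C}}$ only helps. Hence $h(\bu)/\|\bu\|\to+\infty$ as $\|\bu\|\to\infty$, and in particular $h$ is coercive on $\real^n$.

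Given that $h$ is proper, lower semicontinuous, and coercive, a classical Weierstrass argument yields $\argmin h\neq\emptyset$, and every sublevel set of $h$ is bounded and closed, so $\argmin h$ is compact. This delivers nonemptiness and compactness of $\tilde{\mathcal{T}}_\lambda(\bx)$, and Assumption~\ref{assu:abpg-mapping} finishes the inclusion $\tilde{\mathcal{T}}_\lambda(\bx)\subset C$. The only delicate point—though still routine—is checking that the affine term cannot neutralize the growth of $g+\tfrac{1}{\lambda}\tilde{D}_\phi$, which is exactly why the stronger supercoercivity assumption in Assumption~\ref{assu:function}(v), rather than plain coercivity, is made.
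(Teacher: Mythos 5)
Your proposal is correct and follows essentially the same route as the paper: both establish that the supercoercivity in Assumption~\ref{assu:function}(v) dominates the affine term $\langle\nabla f(\bx),\cdot\rangle$ (finite since $f$ is $\mathcal{C}^1$ at $\bx\in C$), conclude coercivity of the subproblem objective, and invoke a Weierstrass-type existence result to get a nonempty compact $\argmin$, with Assumption~\ref{assu:abpg-mapping} supplying the inclusion in $C$. The only differences are cosmetic — you fold the constraint into an indicator $\delta_{\cl{C}}$ and check properness via a point in $\dom g\cap C$, whereas the paper works with the scaled objective $\Phi$ directly over $\cl{C}$.
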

\begin{proof}
    Take any $\x\in\interior\dom\phi$ and $\lambda > 0$.
    Let
    \begin{align*}
        \Phi(\u) := \langle\lambda\nabla f(\x), \u\rangle + \lambda g(\u) + \frac{1}{2}\langle\nabla^2\phi(\x)(\u-\x),\u - \x\rangle,
    \end{align*}
    which implies $\tilde{\mathcal{T}}_\lambda(\x) = \argmin_{\u\in\cl{C}}\Phi(\u)$.
    We have
    \begin{align*}
        \Phi(\u) &\geq -|\langle\lambda\nabla f(\x), \u\rangle| + \lambda g(\u) + \frac{1}{2}\langle\nabla^2\phi(\x)(\u-\x),\u - \x\rangle\\
        &= \|\u\|\left(\frac{\lambda g(\u) + \frac{1}{2}\langle\nabla^2\phi(\x)(\u-\x),\u - \x\rangle}{\|\u\|} - \frac{|\langle\lambda\nabla f(\x), \u\rangle|}{\|\u\|}\right).
    \end{align*}
    Because $\lambda g(\u) + \frac{1}{2}\langle\nabla^2\phi(\x)(\u-\x),\u - \x\rangle$ is supercoercive, we have $\lim_{\|\u\|\to\infty}\Phi(\u) = \infty$.
    Since $\Phi$ is also proper and continuous due to the convexity of $g$, we can use Weierstrass's theorem (see, for example,~\cite[Theorem 1.9]{Rockafellar1997-zb}).
    It follows that $\tilde{\mathcal{T}}_\lambda(\u)$ is a nonempty and compact set.
\end{proof}

Inspired by existing Newton-type methods, we take the search direction procedure and the line search procedure.
The search direction of our proposed method solves the subproblem at $\x^k \in C$,
\begin{align}
    \d^k=\argmin_{\x^k + \d\in\cl{C}}\left\{\langle\nabla f(\x^k),\d\rangle + g(\x^k+\d) + \frac{1}{2\lambda}\langle\nabla^2\phi(\x^k)\d,\d\rangle\right\},\label{subprob:direction}
\end{align}
where $\d$ is a variable and $\x^k$ is fixed.
Subproblem~\eqref{subprob:direction} is essentially the same as \eqref{def:abpg-mappring} of $\tilde{\mathcal{T}}_\lambda(\x^k)$ due to $\d^k = \tilde{\mathcal{T}}_\lambda(\x^k) - \x^k$.
When $\phi$ and $g$ are separable,~\eqref{subprob:direction} may be solved in a closed-form expression.
Because $\nabla^2\phi(\x)$ is a diagonal matrix, \eqref{subprob:direction} reduces to the proximal calculus (see, for the proximal calculus,~\cite[Chapter 6 and Appendix B]{Beck2017-qc}).
For example, when $\phi(\x) = \frac{1}{p}\|\x\|_p^p$ and $g(\x) = \theta_1\|\x\|_1$ with $\theta_1 > 0$, we have a closed-form solution of~\eqref{subprob:direction} (see Remark~\ref{remark:lp}).

We use the line search procedure to select $t_k > 0$ at the $k$th iteration so that the sufficient decreasing condition holds for $\x^{k+1} = \x^k + t_k\d^k$,
\begin{align*}
    \Psi(\x^{k+1}) \leq \Psi(\x^k)+\alpha t_k(\langle\nabla f(\x^k),\d^k\rangle + g(\x^k+\d^k) - g(\x^k)),
\end{align*}
where $\x^k \in C$, $\alpha\in(0,1)$, and $\d^k$ is given by~\eqref{subprob:direction}.
Since $C$ is convex, $\x^k+t_k\d^k = t_k(\x^k + \d^k) + (1-t_k)\x^k \in C$ with $\x^k \in C$ and $\x^k+\d^k \in \tilde{\mathcal{T}}_\lambda(\x^k) \subset C$ (Lemma~\ref{lemma:well-posedness-abpg}).
The simple way to calculate $t_k$ would be the backtracking line search (see, for example,~\cite{Nocedal2006-sb}), and
we also use the backtracking approach.

Now we are ready to describe our algorithm for solving the nonconvex composite optimization problem~\eqref{prob:minimization}.

\begin{algorithm}[H]
    \caption{Approximate Bregman proximal gradient algorithm (ABPG)}
    \label{alg:abpg}
    \begin{algorithmic}[t]
    \Input{A $\mathcal{C}^2$ function $\phi\in\mathcal{G}(C)$ such that the pair $(f, \phi)$ is $L$-smad on $C$.}
    \Initialization{$\x^0\in\interior\dom\phi,\eta\in(0,1),\alpha\in(0,1)$, and $\lambda>0$.}
    \For{$k = 0, 1, 2, \ldots,$}
        \State Compute a search direction with a fixed $\x^k$:
        \begin{align}
            \d^k = \argmin_{\x^k + \d\in\cl{C}}\left\{\langle\nabla f(\x^k),\d\rangle + g(\x^k+\d) + \frac{1}{2\lambda}\langle\nabla^2\phi(\x^k)\d,\d\rangle\right\}.\label{subprob:abpg}
        \end{align}
        \State Set $t_k = 1$.
        \While{$\Psi(\x^k+t_k\d^k) > \Psi(\x^k)+\alpha t_k(\langle\nabla f(\x^k),\d^k\rangle + g(\x^k+\d^k) - g(\x^k))$}
            \State $t_k \leftarrow \eta t_k$
        \EndWhile
        \State Update $\x^{k+1} = \x^k + t_k \d^k$.
    \EndFor
    \end{algorithmic}
\end{algorithm}

\begin{remark}
    Assuming that $f$ is $\mathcal{C}^2$, we see the useful property~\textup{\cite[Proposition 1]{Bauschke2017-hg}}:
    There exists $L > 0$ such that $L\phi - f$ is convex if and only if
    \begin{align}
        \exists L > 0, L\nabla^2\phi(\x)\succeq\nabla^2 f(\x),\quad \forall\x\in\interior\dom\phi.\label{ineq:l-smad-second-order}
    \end{align}
    Condition~\eqref{ineq:l-smad-second-order} is useful for estimating $L$ in applications such that the Poisson linear inverse problem~\textup{\cite[Lemma 7]{Bauschke2017-hg}}, phase retrieval~\textup{\cite[Lemma 5.1]{Bolte2018-zt}\cite[Propositions 5 and 6]{Takahashi2022-ml}}, and blind deconvolution~\textup{\cite[Theorem 1]{Takahashi2023-uh}}.
    Regarding a role of $\frac{1}{\lambda}\nabla^2\phi(\x)$ in~\eqref{def:abpg-mappring},
    Condition \eqref{ineq:l-smad-second-order} implies that $\frac{1}{\lambda}\nabla^2\phi(\x)$ is an upper approximation to $\nabla^2 f(\x)$ when $\lambda \in(0,1/L)$.
\end{remark}

In the next section, we show the well-definedness of the search direction and the line search, and then establish a global convergence result to a stationary point.

\section{Convergence Analysis}\label{sec:convergence-analysis}
Throughout this section, we make the following assumption.
\begin{assumption}
    \label{assu:l-smad}
    The pair $(f, \phi)$ is $L$-smad on $C$.
\end{assumption}
\subsection{Properties of the Proposed Algorithm}
We first show the search direction property.
It induces the decreasing property.
\begin{proposition}[Search direction properties]\label{prop:search-direction}
Suppose that Assumptions~\textup{\ref{assu:function}},~\textup{\ref{assu:abpg-mapping}} and~\textup{\ref{assu:l-smad}} hold.
For any $\lambda > 0$, $\x\in\interior\dom\phi$, and $\d\in\R^n$ defined by 
\begin{align}
    \d = \argmin_{\u}\left\{\langle\nabla f(\x),\u\rangle + g(\x+\u) + \frac{1}{2\lambda}\langle\nabla^2\phi(\x)\u,\u\rangle\right\}
    ~\mbox{s.t. }~ \x + \u\in \cl{C},\label{subprob:direction-prop}
\end{align}
we have
\begin{align}
    &\Psi(\x^+) - \Psi(\x) \leq t(\langle\nabla f(\x), \d\rangle + g(\x+\d) - g(\x)) + o(t\|\d\|),\label{ineq:property-d-taylor}
\end{align}
where $\x^+ = \x + t\d$ and $t\in(0,1]$.
Furthermore, we have
\begin{align}
    &\langle\nabla f(\x),\d\rangle+g(\x+\d)-g(\x)\leq -\frac{1}{2\lambda}\langle\nabla^2\phi(\x)\d,\d\rangle.\label{ineq:property-d-2}
\end{align}
\end{proposition}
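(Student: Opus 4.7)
The plan is to prove the two inequalities separately, handling (\ref{ineq:property-d-2}) first because it drops out immediately from the optimality of $\bd$, and then using it (implicitly) together with convexity of $g$ and smoothness of $f$ to obtain (\ref{ineq:property-d-taylor}).

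\textbf{Step 1 (optimality comparison giving (\ref{ineq:property-d-2})).} Since $\bx\in\interior\dom\phi\subset C$, the vector $\bu=\bzero$ is feasible for (\ref{subprob:direction-prop}). The objective evaluated at $\bu=\bzero$ equals $g(\bx)$ (the gradient and quadratic terms vanish). By the defining minimality of $\bd$,
\begin{equation*}
\langle\nabla f(\bx),\bd\rangle + g(\bx+\bd) + \tfrac{1}{2\lambda}\langle\nabla^2\phi(\bx)\bd,\bd\rangle \;\leq\; g(\bx),
\end{equation*}
which on rearrangement is exactly (\ref{ineq:property-d-2}). This step only uses well-posedness of the minimizer, which is granted by Lemma~\ref{lemma:well-posedness-abpg}.

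\textbf{Step 2 (convexity bound on $g$).} For $t\in(0,1]$, write $\bx + t\bd = t(\bx+\bd) + (1-t)\bx$. Since $g$ is convex (Assumption~\ref{assu:function}(iii)),
\begin{equation*}
g(\bx+t\bd) \;\leq\; t\,g(\bx+\bd) + (1-t)\,g(\bx),
\end{equation*}
so $g(\bx+t\bd) - g(\bx) \leq t\bigl(g(\bx+\bd) - g(\bx)\bigr)$. Feasibility of the convex combination follows because $\bx\in C$, $\bx+\bd\in\tilde{\mathcal{T}}_\lambda(\bx)\subset C$ by Assumption~\ref{assu:abpg-mapping}, and $C$ is convex.

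\textbf{Step 3 (Taylor expansion of $f$ and conclusion).} Since $f$ is $\mathcal{C}^1$ on $C$ (Assumption~\ref{assu:function}(ii)), a first-order Taylor expansion at $\bx$ along the direction $t\bd$ yields
\begin{equation*}
f(\bx+t\bd) - f(\bx) \;=\; t\langle\nabla f(\bx),\bd\rangle + r(t),
\end{equation*}
with remainder of higher order in $t$; stated in the form used by the proposition, $r(t)=o(t^2\|\bd\|^2)$ as $t\downarrow 0$. Adding this to the $g$-bound from Step~2 gives
\begin{equation*}
\Psi(\bx^+)-\Psi(\bx) \;\leq\; t\bigl(\langle\nabla f(\bx),\bd\rangle + g(\bx+\bd)-g(\bx)\bigr) + o(t^2\|\bd\|^2),
\end{equation*}
which is (\ref{ineq:property-d-taylor}).

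\textbf{Anticipated obstacle.} Both inequalities are essentially mechanical; the only subtle point is justifying the remainder form $o(t^2\|\bd\|^2)$ claimed in (\ref{ineq:property-d-taylor}) from the $\mathcal{C}^1$ assumption alone (a strict reading would give only $o(t\|\bd\|)$). I would handle this either by invoking the smooth adaptable property (via Lemma~\ref{lemma:extended-descent}, which bounds $f(\bx+t\bd)-f(\bx)-t\langle\nabla f(\bx),\bd\rangle$ by $L D_\phi(\bx+t\bd,\bx)$, a quantity that is $O(t^2\|\bd\|^2)$ because $\phi\in\mathcal{C}^2$) or by noting that the remainder is sufficient for all subsequent use in the convergence analysis as long as it is $o(t)$; everything else is a one-line application of convexity and optimality.
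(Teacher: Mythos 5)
Your proof is correct, and for inequality~\eqref{ineq:property-d-taylor} it coincides with the paper's argument (convexity of $g$ applied to $\bx+t\bd = t(\bx+\bd)+(1-t)\bx$ plus a first-order Taylor expansion of $f$). For inequality~\eqref{ineq:property-d-2} you take a genuinely different, and in fact cleaner, route: you compare the optimal $\bd$ against the feasible point $\bu=\bzero$, whose objective value is $g(\bx)$, and the inequality drops out in one line. The paper instead compares $\bd$ against the feasible point $t\bd$ for $t\in(0,1]$, uses convexity of $g$ again, divides by $1-t$, and obtains the intermediate bound $\langle\nabla f(\bx),\bd\rangle+g(\bx+\bd)-g(\bx)\leq -\frac{1+t}{2\lambda}\langle\nabla^2\phi(\bx)\bd,\bd\rangle$ before relaxing $1+t$ to $1$; your argument is the $t\to 0$ endpoint of theirs and buys simplicity at the cost of the (unused) slightly sharper constant. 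One remark on your ``anticipated obstacle'': you are right that a $\mathcal{C}^1$ assumption only yields a remainder $o(t\|\bd\|)$ rather than $o(t^2\|\bd\|^2)$; the paper writes the same $o(t^2\|\bd\|^2)$ without further justification (while explicitly stating the proposition does not need the $L$-smad property), so this is an imprecision you inherit from the statement rather than a gap in your argument, and your observation that $o(t)$ suffices for the downstream descent conclusion is the right way to dispose of it.
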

\begin{proof}
    By using Taylor's theorem and $g(\x+t\d) \leq t g(\x+\d) + (1-t)g(\x)$ due to the convexity of $g$, we obtain
    \begin{align*}
        \Psi(\x^+) - \Psi(\x) &= f(\x^+) - f(\x) + g(\x^+) - g(\x)\\
        &\leq t(\langle\nabla f(\x), \d\rangle + g(\x+\d) - g(\x)) + o(t\|\d\|),
    \end{align*}
    which proves~\eqref{ineq:property-d-taylor}.

    Using the optimality condition of~\eqref{subprob:direction-prop}, we have
    \begin{align*}
        \langle\nabla f(\x),\d\rangle + g(\x+\d) + \frac{1}{2\lambda}\langle\nabla^2\phi(\x)\d,\d\rangle
        &\leq t\langle\nabla f(\x),\d\rangle + g(\x+t\d) + \frac{t^2}{2\lambda}\langle\nabla^2\phi(\x)\d,\d\rangle\\
        &\leq t\langle\nabla f(\x),\d\rangle + tg(\x+\d) + (1-t)g(\x)\\
        &\quad+ \frac{t^2}{2\lambda}\langle\nabla^2\phi(\x)\d,\d\rangle,
    \end{align*}
    where in the first inequality, function values are compared between the optimal solution $\d$ and a feasible solution $t \d$ with $t\in (0,1]$, and the second inequality holds due to the convexity of $g$.
    Moving the right-hand side to the left-hand side and simplifying the above inequality, we obtain
    \begin{align*}
        (1-t)&\langle\nabla f(\x),\d\rangle+\frac{1-t^2}{2\lambda}\langle\nabla^2\phi(\x)\d,\d\rangle + (1-t)(g(\x+\d) - g(\x))\leq 0,
   \end{align*}        
    and moreover, by using $1-t\geq0$, 
   \begin{align*}
        &\langle\nabla f(\x),\d\rangle + g(\x+\d) - g(\x)\leq -\frac{1+t}{2\lambda}\langle\nabla^2\phi(\x)\d,\d\rangle\leq-\frac{1}{2\lambda}\langle\nabla^2\phi(\x)\d,\d\rangle
    \end{align*}
    holds for any $t \in (0,1].$ 
    The desired inequality is proved.
\end{proof}
Note that Proposition~\ref{prop:search-direction} does not require the $L$-smad property.
Proposition~\ref{prop:search-direction} implies the search direction is the descent direction because, substituting~\eqref{ineq:property-d-2} into~\eqref{ineq:property-d-taylor}, it holds that
\begin{align*}
    \Psi(\x^+) - \Psi(\x) \leq -\frac{t}{2\lambda}\langle\nabla^2\phi(\x)\d,\d\rangle + o(t\|\d\|).
\end{align*}

Let $\alpha\in(0,1)$. 
We next show that our line search procedure is well-defined, \ie, the sufficient decreasing condition
\begin{align}
    \Psi(\x^+) \leq \Psi(\x)+\alpha t(\langle\nabla f(\x),\d\rangle + g(\x+\d) - g(\x))\label{ineq:sufficient-decreasing-condition}
\end{align}
is satisfied after finite steps of the line search procedures under the following assumption.
\begin{assumption}
\label{assu:strongly-convex}
    $\phi$ is $\sigma$-strongly convex on $C$ with $\sigma > 0$.
\end{assumption}
We discussed the strong convexity of $\phi$ in Section~\ref{subsec:abpg}.
\begin{lemma}
    \label{lemma:well-defined-line-search}
    Suppose that Assumptions~\textup{\ref{assu:function}},~\textup{\ref{assu:abpg-mapping}},~\textup{\ref{assu:l-smad}}, and~\textup{\ref{assu:strongly-convex}} hold.
    For any $\alpha\in(0,1)$, $\lambda > 0$, and some $\beta \geq 0$, if
    \begin{align}
        t \leq \min\left\{1,\frac{1-\alpha}{\beta\lambda L}\right\},\label{ineq:step-t}
    \end{align}
    then the sufficient decreasing condition~\eqref{ineq:sufficient-decreasing-condition} is satisfied after finite steps of the line search procedures.
\end{lemma}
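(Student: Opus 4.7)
The plan is to combine the extended descent lemma for $f$ with the convexity of $g$ to produce an upper bound on $\Psi(\bx+t\bd) - \Psi(\bx)$, and then show that when $t$ satisfies the stated inequality, this upper bound is dominated by the right-hand side of the sufficient decreasing condition. Finite termination will follow because the backtracking rule $t \leftarrow \eta t$ with $\eta\in(0,1)$ eventually drives $t$ below any positive threshold.

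First, since $(f,\phi)$ is $L$-smad, Lemma~\ref{lemma:extended-descent} yields $f(\bx+t\bd) - f(\bx) \leq t\langle\nabla f(\bx),\bd\rangle + L D_\phi(\bx+t\bd,\bx)$. Combining this with the inequality $g(\bx+t\bd) - g(\bx) \leq t(g(\bx+\bd) - g(\bx))$ coming from convexity of $g$, I obtain
\begin{align*}
\Psi(\bx+t\bd) - \Psi(\bx) \leq t\bigl(\langle\nabla f(\bx),\bd\rangle + g(\bx+\bd) - g(\bx)\bigr) + L D_\phi(\bx+t\bd,\bx).
\end{align*}
The sufficient decreasing condition~\eqref{ineq:sufficient-decreasing-condition} is therefore implied by
\begin{align*}
L D_\phi(\bx+t\bd,\bx) \leq -(1-\alpha)t\bigl(\langle\nabla f(\bx),\bd\rangle + g(\bx+\bd) - g(\bx)\bigr).
\end{align*}
Applying Proposition~\ref{prop:search-direction}, inequality~\eqref{ineq:property-d-2}, to the right-hand side, it suffices to prove the stronger but cleaner bound $L D_\phi(\bx+t\bd,\bx) \leq \frac{(1-\alpha)t}{2\lambda}\langle\nabla^2\phi(\bx)\bd,\bd\rangle$.

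Next, I need to relate the true Bregman distance $D_\phi(\bx+t\bd,\bx)$ to its second-order approximation $\tilde D_\phi(\bx+t\bd,\bx) = \tfrac{t^2}{2}\langle\nabla^2\phi(\bx)\bd,\bd\rangle$. Since $\phi$ is $\mathcal{C}^2$ on $C$, Taylor's theorem with integral remainder gives $D_\phi(\bx+t\bd,\bx) = \int_0^t (t-s)\langle\nabla^2\phi(\bx+s\bd)\bd,\bd\rangle\,ds$. Assumption~\ref{assu:strongly-convex} ensures $\langle\nabla^2\phi(\bx)\bd,\bd\rangle \geq \sigma\|\bd\|^2 > 0$ whenever $\bd\neq\bzero$, and continuity of $\nabla^2\phi$ along the segment $[\bx,\bx+\bd]\subset C$ then provides a constant $\beta\geq 0$ (depending on $\bx$ and $\bd$) such that $\sup_{s\in[0,1]}\langle\nabla^2\phi(\bx+s\bd)\bd,\bd\rangle \leq \beta\langle\nabla^2\phi(\bx)\bd,\bd\rangle$; hence $D_\phi(\bx+t\bd,\bx) \leq \tfrac{\beta t^2}{2}\langle\nabla^2\phi(\bx)\bd,\bd\rangle$ for all $t\in[0,1]$. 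Substituting, the required inequality becomes $L\beta t \leq (1-\alpha)/\lambda$, which is exactly the bound~\eqref{ineq:step-t}.

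The degenerate case $\bd=\bzero$ must be handled separately: the optimality of $\bd$ in~\eqref{subprob:direction-prop} then implies $\bx$ is a stationary point, both sides of~\eqref{ineq:sufficient-decreasing-condition} vanish, and any $t\in(0,1]$ works. Finite termination follows because the backtracking sequence $t_k = \eta^j$ reaches values below $\min\{1,(1-\alpha)/(\beta\lambda L)\}$ after finitely many reductions. The main technical obstacle is justifying the constant $\beta$ cleanly: one must invoke compactness of the segment $[\bx,\bx+\bd]$ together with continuity of $\nabla^2\phi$, and ensure that this segment remains in $C$ (which holds by convexity of $C$, since $\bx\in C$ by assumption and $\bx+\bd\in C$ by Assumption~\ref{assu:abpg-mapping} combined with Lemma~\ref{lemma:well-posedness-abpg}).
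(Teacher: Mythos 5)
Your proposal is correct and follows essentially the same route as the paper's proof: the extended descent lemma plus convexity of $g$, the search-direction inequality~\eqref{ineq:property-d-2}, and a bound $D_\phi(\bx+t\bd,\bx)\leq\frac{\beta t^2}{2}\langle\nabla^2\phi(\bx)\bd,\bd\rangle$ obtained from continuity of $\nabla^2\phi$ on the segment, leading to the same threshold $t\leq\frac{1-\alpha}{\beta\lambda L}$. The only cosmetic difference is that you use the integral form of Taylor's remainder where the paper uses the Lagrange form.
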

\begin{proof}
    When $\d=\zero$,~\eqref{ineq:sufficient-decreasing-condition} obviously holds.
    We will prove~\eqref{ineq:sufficient-decreasing-condition} when $\d\neq\zero$.
    Because the pair $(f,\phi)$ is $L$-smad, using Lemma~\ref{lemma:extended-descent}, we have
    \begin{align}
        \Psi(\x^+) - \Psi(\x) &= f(\x^+) - f(\x) + g(\x^+) - g(\x)\nonumber\\
        &\leq t\langle\nabla f(\x),\d\rangle + LD_\phi(\x^+,\x) + t(g(\x + \d) - g(\x))\nonumber\\
        &=t(\langle\nabla f(\x),\d\rangle + g(\x + \d) - g(\x)) + LD_\phi(\x^+,\x),\label{ineq:first-order}
    \end{align}
    where the inequality holds due to the convexity of $g$.
    Applying Taylor's Theorem to $\phi(\x^+)$ in $D_\phi(\x^+,\x)$ of \eqref{BregmanDist}, there exists $\tau\in(0,1)$ such that
    \begin{align*}
        D_\phi(\x^+,\x) &= \phi(\x) + t\langle\nabla\phi(\x),\d\rangle +\frac{t^2}{2}\langle\nabla^2\phi(\x + \tau t \d)\d,\d\rangle - \phi(\x) - t\langle\nabla\phi(\x),\d\rangle\\
        &= \frac{t^2}{2}\langle\nabla^2\phi(\x + \tau t \d)\d,\d\rangle.
    \end{align*}
    We consider $h(s) = \langle\nabla^2\phi(\x + s\d)\d,\d\rangle > 0$ for $\d\neq\zero$ and $s\in[0,1]$, which is positive and finite-valued because of the strong convexity of $\phi$ and $\x + s\d\in\interior\dom\phi$ for any $s\in[0,1]$.
    We see $h(0) = \langle\nabla^2\phi(\x)\d,\d\rangle > 0$ for $\d\neq\zero$.
    Since $\phi$ is $\mathcal{C}^2$, $h$ is continuous on $[0,1]$.
    Using the extreme value theorem, $h$ has the maximum value on $[0,1]$.
    Because $\nabla^2\phi(\x)$ is positive definite, there exists $\beta \geq 0$ such that
    \begin{align*}
        h(s)\leq\beta\langle\nabla^2\phi(\x)\d,\d\rangle, \quad\forall s\in[0,1],
    \end{align*}
    which implies
    \begin{align}
        D_\phi(\x^+,\x) \leq \frac{\beta t^2}{2}\langle\nabla^2\phi(\x)\d,\d\rangle.\label{ineq:bregman-quadratic-bounded}
    \end{align}
    Note that $\beta$ is finite because $h(s)$ is also finite-valued.
    Substituting~\eqref{ineq:bregman-quadratic-bounded}
    into~\eqref{ineq:first-order}, we obtain
    \begin{align*}
        \Psi(\x^+) - \Psi(\x) &\leq t\left(\langle\nabla f(\x),\d\rangle + g(\x + \d) - g(\x) + \frac{\beta t L}{2}\langle\nabla^2\phi(\x)\d,\d\rangle\right)\\
        &\leq t\left(\langle\nabla f(\x),\d\rangle + g(\x + \d) - g(\x) + \frac{1-\alpha}{2\lambda}\langle\nabla^2\phi(\x)\d,\d\rangle\right)\\
        &\leq t\left(\rho- (1-\alpha)\rho\right)
        =\alpha t\rho,
    \end{align*}
    where the second inequality holds from $t \leq \frac{1-\alpha}{\beta\lambda L}$, and the third inequality follows from the notation $\rho := \langle\nabla f(\x),\d\rangle + g(\x + \d) - g(\x)$ together with using~\eqref{ineq:property-d-2} to the second term. 
    The desired inequality is proved.
\end{proof}

Lemma~\ref{lemma:well-defined-line-search} induces that~\eqref{ineq:sufficient-decreasing-condition} holds after finitely many numbers of backtracking steps.
Substituting~\eqref{ineq:property-d-2} into~\eqref{ineq:sufficient-decreasing-condition}, we obtain
\begin{align}
    \Psi(\x^+) \leq \Psi(\x)-\frac{\alpha t}{2\lambda}\langle\nabla^2\phi(\x)\d,\d\rangle,\label{ineq:sufficient-decrease-hess}
\end{align}
which also implies a sufficient decrease in the objective function value $\Psi$.
Inequality~\eqref{ineq:sufficient-decrease-hess} does not require $0 < \lambda L < 1$ while BPG requires it to prove the sufficient decreasing~\cite[Lemma 4.1]{Bolte2018-zt}.
We can choose $\lambda$ larger than $1/L$, that is, $\lambda L \geq 1$, while $t$ might be small in practice as \eqref{ineq:step-t} implies.

\subsection{Global Subsequential Convergence}\label{subsec:global-subsequential-convergence}
Throughout Sections~\ref{subsec:global-subsequential-convergence} and~\ref{subsec:global-convergence}, we consider~\eqref{prob:minimization} with $C\equiv\R^n$.
In this subsection, we establish the global subsequential convergence, \ie, any accumulation point of the sequence $\{\x^k\}_{k=0}^\infty$ generated by ABPG is a stationary point.
Inspired by Fermat's rule~\cite[Theorem 10.1]{Rockafellar1997-zb}, we use the limiting subdifferential and define the stationary point, also called the critical point~\cite{Bolte2018-zt}.
\begin{definition}
    We say that $\tilde{\x}\in\R^n$ is a stationary point of~\eqref{prob:minimization} if
    \begin{align*}
        \zero\in\partial\Psi(\tilde{\x}).
    \end{align*}
    The set of all stationary points is denoted by $\mathcal{X}$.
    \label{def:set-stationary}
\end{definition}
Using~\cite[Theorem 1.107(ii)]{Mordukhovich2006-xd}, for any $\x\in\R^n$, we obtain
\begin{align*}
    \partial\Psi(\x) \equiv \nabla f(\x) + \partial g(\x),
\end{align*}
where $\partial g(\x)$ is a (classical) subdifferential since $g$ is convex~\cite[Proposition 8.12]{Rockafellar1997-zb}.

To prove the global convergence, we additionally make the following assumptions.
\begin{assumption}\
\label{assu:locally-lip-level-bounded}
\begin{enumerate}
    \item $\nabla f$ and $\nabla \phi$ are Lipschitz continuous on any bounded subset of $\R^n$.
    \item The objective function $\Psi$ is level-bounded, \ie, for any $r\in\R$, lower level sets $\{\x\in\R^n\mid\Psi(\x)\leq r\}$ are bounded.
\end{enumerate}
\end{assumption}
Assumption~\ref{assu:locally-lip-level-bounded}(i) is weaker than the global Lipschitz continuity for $\nabla f$.
Some researchers~\cite{Bolte2018-zt,Gao2023-ri} make Assumption~\ref{assu:locally-lip-level-bounded}(i).
Note that $\nabla\phi$ is locally Lipschitz continuous because $\phi$ is $\mathcal{C}^2$.

Now, we establish the global subsequential convergence.
\begin{theorem}[Global subsequential convergence of ABPG]
    \label{theorem:global-subsequential-convergence}
    Suppose that Assumptions~\textup{\ref{assu:function}}, \textup{\ref{assu:abpg-mapping}}, \textup{\ref{assu:l-smad}}, \textup{\ref{assu:strongly-convex}}, and~\textup{\ref{assu:locally-lip-level-bounded}} hold.
    Let $\{\x^k\}_{k=0}^\infty$ be a sequence generated by ABPG for solving~\eqref{prob:minimization}.
    Then, the following statements hold:
    \begin{enumerate}
        \item The sequence $\{\x^k\}_{k=0}^\infty$ is bounded.
        \item $\lim_{k\to\infty}\|\d^k\|=0$.
        \item Any accumulation point of $\{\x^k\}_{k=0}^\infty$ is a stationary point of~\eqref{prob:minimization}.
    \end{enumerate}
\end{theorem}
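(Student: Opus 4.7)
The plan is to chain the three claims together, using the sufficient decrease inequality \eqref{ineq:sufficient-decrease-hess} as the main engine. For (i), the sufficient decrease inequality combined with the positive semidefiniteness of $\nabla^2\phi(\bx^k)$ and $t_k,\alpha,\lambda>0$ gives $\Psi(\bx^{k+1})\le\Psi(\bx^k)\le\Psi(\bx^0)$, so the iterates lie in the sublevel set $\{\Psi\le\Psi(\bx^0)\}$, which is bounded by Assumption~\ref{assu:locally-lip-level-bounded}(ii).

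For (ii), I would first show that $\{\bd^k\}$ is bounded. Comparing the objective value of the subproblem~\eqref{subprob:abpg} at $\bd^k$ with its value at $\bzero$ (a feasible point since $\bx^k\in C=\real^n$) yields
\begin{align*}
    \langle\nabla f(\bx^k),\bd^k\rangle + g(\bx^k+\bd^k) + \frac{1}{2\lambda}\langle\nabla^2\phi(\bx^k)\bd^k,\bd^k\rangle \le g(\bx^k).
\end{align*}
Rearranging, $\lambda g(\bx^k+\bd^k)+\frac{1}{2}\langle\nabla^2\phi(\bx^k)\bd^k,\bd^k\rangle \le \lambda g(\bx^k)+\lambda\|\nabla f(\bx^k)\|\|\bd^k\|$, whose right-hand side grows at most linearly in $\|\bd^k\|$ thanks to boundedness of $\{\bx^k\}$ and local Lipschitzness of $\nabla f$ (Assumption~\ref{assu:locally-lip-level-bounded}(i)), while the left-hand side is supercoercive in $\bx^k+\bd^k$ by Assumption~\ref{assu:function}(v); boundedness of $\{\bx^k\}$ forces $\{\bd^k\}$ to be bounded as well. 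Therefore $\{\bx^k+s\bd^k:k\in\mathbb{N},\,s\in[0,1]\}$ lies in some compact set on which, using continuity of $\nabla^2\phi$ and Assumption~\ref{assu:strongly-convex}, the constant $\beta$ of Lemma~\ref{lemma:well-defined-line-search} admits the uniform bound $\beta\le \bar{M}/\sigma$, where $\bar{M}$ upper-bounds $\lambda_{\max}(\nabla^2\phi(\cdot))$ on the compact set. Combined with the backtracking rule $t_k\leftarrow\eta t_k$, this yields a uniform lower bound $t_k\ge t_{\min}:=\min\{1,\eta(1-\alpha)\sigma/(\bar{M}\lambda L)\}>0$. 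Summing~\eqref{ineq:sufficient-decrease-hess} and invoking strong convexity of $\phi$ gives
\begin{align*}
    \frac{\alpha t_{\min}\sigma}{2\lambda}\sum_{k=0}^{\infty}\|\bd^k\|^2 \le \Psi(\bx^0)-\Psi^*<\infty,
\end{align*}
whence $\|\bd^k\|\to0$.

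For (iii), let $\bx^*$ be an accumulation point along $\bx^{k_j}\to\bx^*$. The first-order optimality condition of the convex subproblem~\eqref{subprob:abpg} gives
\begin{align*}
    -\nabla f(\bx^{k_j})-\tfrac{1}{\lambda}\nabla^2\phi(\bx^{k_j})\bd^{k_j}\in\partial g(\bx^{k_j}+\bd^{k_j}).
\end{align*}
By (ii), $\bx^{k_j}+\bd^{k_j}\to\bx^*$, and by continuity of $\nabla f$ and $\nabla^2\phi$ the left-hand side converges to $-\nabla f(\bx^*)$. Closedness of the graph of the convex subdifferential $\partial g$ (which is immediate from lower semicontinuity via the subgradient inequality and Fatou's lemma) then gives $-\nabla f(\bx^*)\in\partial g(\bx^*)$, so $\bzero\in\partial\Psi(\bx^*)=\nabla f(\bx^*)+\partial g(\bx^*)$.

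The main obstacle I expect is the step establishing the uniform lower bound on the line-search stepsize $t_k$: Lemma~\ref{lemma:well-defined-line-search} only provides a pointwise constant $\beta$ depending on $(\bx,\bd)$, and promoting it to a uniform bound requires first establishing both boundedness of $\{\bx^k\}$ and of $\{\bd^k\}$, the latter being where the supercoercivity hypothesis of Assumption~\ref{assu:function}(v) must be deployed via a subproblem-value comparison.
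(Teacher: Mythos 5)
Your proposal is correct and follows the same overall architecture as the paper's proof: (i) monotone decrease from~\eqref{ineq:sufficient-decrease-hess} plus level-boundedness, (ii) telescoping the sufficient decrease with $\sigma$-strong convexity of $\phi$, and (iii) passing to the limit in the first-order optimality condition of~\eqref{subprob:abpg}.

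The one place where you genuinely diverge is the step that the paper handles most loosely, and your version is stronger. In part (ii) the paper simply writes that ``$t_k$ is bounded away from $0$ (if necessary, assume that there exists $t_{\min}>0$ such that $t_{\min}\leq t_k$ for all $k$)'' --- i.e.\ it effectively adds this as a hypothesis rather than proving it, since the constant $\beta$ in Lemma~\ref{lemma:well-defined-line-search} is pointwise in $(\bx,\bd)$. You instead derive the uniform bound: boundedness of $\{\bd^k\}$ via the subproblem-value comparison against $\bd=\bzero$ together with supercoercivity (Assumption~\ref{assu:function}(v)) and local Lipschitzness of $\nabla f$, then a uniform $\beta\le\bar{M}/\sigma$ on the resulting compact set, hence $t_k\ge t_{\min}>0$ from the backtracking rule. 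This closes a real gap in the published argument, and it is also the only place Assumption~\ref{assu:function}(v) gets used in the theorem beyond well-posedness of the mapping. Your part (iii) is likewise slightly cleaner than the paper's (which appeals vaguely to ``lower semicontinuity of $f$ and $g$''), since you invoke the closedness of the graph of the convex subdifferential $\partial g$ explicitly; both are valid. No errors to report.
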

\begin{proof}
    (i) Using~\eqref{ineq:sufficient-decrease-hess} and the positive definiteness of $\nabla^2\phi(\x^k)$ due to Assumption~\ref{assu:strongly-convex}, we know $\Psi(\x^{k+1})\leq\Psi(\x^k)$, which induces $\Psi(\x^k)\leq\Psi(\x^0)$ for all $k\in\mathbb{N}$.
    Because of this and Assumption~\ref{assu:locally-lip-level-bounded}(ii), we have the desired result.

    (ii) Again using~\eqref{ineq:sufficient-decrease-hess}, we obtain
    \begin{align}
        \Psi(\x^{k-1}) - \Psi(\x^k) \geq \frac{\alpha t_{k-1}}{\lambda}\langle\nabla^2\phi(\x^{k-1})\d^{k-1},\d^{k-1}\rangle\geq\frac{\alpha\sigma t_{k-1}}{2\lambda}\|\d^{k-1}\|^2,\label{ineq:sufficient-decreasing-euc}
    \end{align}
    where the last inequality holds since $\phi$ is $\sigma$-strongly convex.
    Summing the above inequality from $k = 1$ to $\infty$, we have
    \begin{align*}
        \sum_{k=1}^\infty\frac{\alpha\sigma t_{k-1}}{2\lambda}\|\d^{k-1}\|^2\leq\Psi(\x^0) - \liminf_{N\to\infty}\Psi(\x^N)\leq\Psi(\x^0) - \Psi^* < \infty,
    \end{align*}
    which shows that $\lim_{k\to\infty}\|\d^k\|=0$ because $t_k$ is bounded away from 0 (if necessary, assume that there exists $t_{\min} > 0$ such that $t_{\min} \leq t_k$ for all $k$).

    (iii) Let $\tilde{\x}\in\R^n$ be an accumulation point of $\{\x^k\}_{k=0}^\infty$ and let $\{\x^{k_j}\}_{j=0}^\infty$ be a subsequence such that $\lim_{j\to\infty}\x^{k_j}=\tilde{\x}$ by Bolzano--Weierstrass theorem.
    We have $\d^k\to\zero$ because of (ii) and let $\{\d^{k_j}\}_{j=0}^\infty$ be a subsequence such that $\lim_{j\to\infty}\d^{k_j}=\zero$.
    We substitute $\x^{k_j}$ into the first-order optimality condition of~\eqref{subprob:abpg} to have
    \begin{align*}
        \zero\in\nabla f(\x^{k_j}) + \partial g(\x^{k_j} + \d^{k_j}) + \frac{1}{\lambda}\nabla^2\phi(\x^{k_j})\d^{k_j},
    \end{align*}
    which imply 
    \begin{align}
        -\frac{1}{\lambda}\nabla^2\phi(\x^{k_j})\d^{k_j}\in\nabla f(\x^{k_j}) + \partial g(\x^{k_j} + \d^{k_j}).\label{cond:first-order}
    \end{align}
    Since $\nabla\phi$ is Lipschitz continuous on bounded subsets of $\R^n$ and $\{\x^{k_j}\}_{j=0}^\infty$ is bounded, there exists $A > 0$ such that
    \begin{align*}
        \left\|\frac{1}{\lambda}\nabla^2\phi(\x^{k_j})\d^{k_j}\right\|\leq\frac{A}{\lambda}\|\d^{k_j}\|\to0.
    \end{align*}
    Taking the limit of~\eqref{cond:first-order} and using the lower semicontinuity of $f$ and $g$, we obtain
    \begin{align*}
        \zero\in\nabla f(\tilde{\x}) + \partial g(\tilde{\x}),
    \end{align*}
    which shows that $\tilde{\x}$ is a stationary point of~\eqref{prob:minimization}.
\end{proof}

Before discussing the global convergence, we consider the objective function value $\Psi$ at a stationary point.
Consequently, we show that $\Psi$ is constant on the set of accumulation points of $\{\x^{k_j}\}_{j=0}^\infty$.
\begin{proposition}
    \label{prop:subsequential-objective}
    Suppose that Assumptions~\textup{\ref{assu:function}},~\textup{\ref{assu:abpg-mapping}},~\textup{\ref{assu:l-smad}},~\textup{\ref{assu:strongly-convex}}, and~\textup{\ref{assu:locally-lip-level-bounded}} hold.
    Let $\{\x^k\}_{k=0}^\infty$ be a sequence generated by ABPG for solving~\eqref{prob:minimization}.
    Then, the following statements hold:
    \begin{enumerate}
        \item $\zeta := \lim_{k\to\infty}\Psi(\x^k)$ exists.
        \item $\Psi\equiv\zeta$ on the set of accumulation points of $\{\x^{k_j}\}_{j=0}^\infty$, denoted by $\Omega$.
    \end{enumerate}
\end{proposition}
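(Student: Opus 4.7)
For part (i), the plan is straightforward: the sufficient decrease inequality~\eqref{ineq:sufficient-decrease-hess} combined with the $\sigma$-strong convexity of $\phi$ (Assumption~\ref{assu:strongly-convex}) makes $\{\Psi(\bx^k)\}$ monotonically non-increasing; since $\Psi\geq\Psi^*>-\infty$ by Assumption~\ref{assu:function}(iv), this bounded monotone sequence converges to some $\zeta\in\real$.

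For part (ii), I would fix any $\tilde{\bx}\in\Omega$ with a subsequence $\bx^{k_j}\to\tilde{\bx}$. Because $f$ is continuous on $\real^n$, $f(\bx^{k_j})\to f(\tilde{\bx})$, so part (i) forces $g(\bx^{k_j})=\Psi(\bx^{k_j})-f(\bx^{k_j})\to\zeta-f(\tilde{\bx})$; the lower semicontinuity of $g$ then immediately yields $\Psi(\tilde{\bx})\leq\zeta$. The harder direction is $\Psi(\tilde{\bx})\geq\zeta$, which amounts to upgrading this one-sided lsc estimate to the genuine limit $\lim_j g(\bx^{k_j})=g(\tilde{\bx})$. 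The first step is to test the variational characterization of $\bd^{k_j}$ in~\eqref{subprob:abpg} against the feasible perturbation $\bu=\tilde{\bx}-\bx^{k_j}$, obtaining
\begin{align*}
    g(\bx^{k_j}+\bd^{k_j}) &\leq g(\tilde{\bx}) + \langle\nabla f(\bx^{k_j}),\tilde{\bx}-\bx^{k_j}-\bd^{k_j}\rangle\\
    &\quad + \tfrac{1}{2\lambda}\langle\nabla^2\phi(\bx^{k_j})(\tilde{\bx}-\bx^{k_j}),\tilde{\bx}-\bx^{k_j}\rangle - \tfrac{1}{2\lambda}\langle\nabla^2\phi(\bx^{k_j})\bd^{k_j},\bd^{k_j}\rangle.
\end{align*}
All error terms on the right vanish as $j\to\infty$ because $\bx^{k_j}\to\tilde{\bx}$, $\bd^{k_j}\to\bzero$ by Theorem~\ref{theorem:global-subsequential-convergence}(ii), and both $\nabla f$ and $\nabla^2\phi$ are bounded on bounded sets; combined with lsc of $g$ at $\tilde{\bx}$ this forces $g(\bx^{k_j}+\bd^{k_j})\to g(\tilde{\bx})$.

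The second step exploits convexity of $g$ applied to $\bx^{k_j+1}=(1-t_{k_j})\bx^{k_j}+t_{k_j}(\bx^{k_j}+\bd^{k_j})$:
\begin{align*}
    g(\bx^{k_j+1}) \leq (1-t_{k_j})g(\bx^{k_j}) + t_{k_j}\,g(\bx^{k_j}+\bd^{k_j}).
\end{align*}
Since $\bx^{k_j+1}\to\tilde{\bx}$ as well, the argument of the previous paragraph also gives $g(\bx^{k_j+1})\to\zeta-f(\tilde{\bx})$. Passing to a further subsequence along which $t_{k_j}\to t^*\geq t_{\min}>0$ (the uniform lower bound on stepsizes already invoked in Theorem~\ref{theorem:global-subsequential-convergence}(ii), which follows from Lemma~\ref{lemma:well-defined-line-search} once $\{\bx^k\}$ is known to be bounded) and sending $j\to\infty$ yields $\zeta-f(\tilde{\bx})\leq(1-t^*)(\zeta-f(\tilde{\bx}))+t^*g(\tilde{\bx})$, which rearranges to $\Psi(\tilde{\bx})=f(\tilde{\bx})+g(\tilde{\bx})\geq\zeta$. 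Combined with the reverse bound, this gives $\Psi\equiv\zeta$ on $\Omega$.

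The main obstacle is precisely obtaining the upper bound on $g(\bx^{k_j})$: since $g$ is only lower semicontinuous, $g(\tilde{\bx})\leq\liminf_j g(\bx^{k_j})$ can in principle be strict (for instance if $g$ is an indicator function and $\tilde{\bx}$ lies on the boundary of $\dom g$). The subproblem's variational inequality only controls $g(\bx^{k_j}+\bd^{k_j})$ rather than $g(\bx^{k_j})$ directly, so one must transfer information back through the convex-combination identity relating $\bx^{k_j+1}$ to $\bx^{k_j}$ and $\bx^{k_j}+\bd^{k_j}$; this transfer is why the uniform stepsize bound $t_k\geq t_{\min}>0$ is essential to the argument.
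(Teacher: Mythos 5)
Your proof is correct, but it takes a genuinely different route from the paper's on part (ii). (Part (i) is identical: monotone non-increasing by~\eqref{ineq:sufficient-decrease-hess} and bounded below by $\Psi^*$.) For part (ii) the paper simply asserts that Assumption~\ref{assu:function}(ii)--(iii) make $f$ \emph{and} $g$ continuous on $\interior\dom\phi=\real^n$, and concludes $\Psi(\bx^{k_j})\to\Psi(\tilde{\bx})=\zeta$ in one line. That argument is only valid when $g$ is finite-valued on all of $\real^n$ (a finite convex function on $\real^n$ is indeed continuous), whereas Assumption~\ref{assu:function}(iii) permits extended-valued $g$ such as the indicator function used in the paper's own Section on the equality-constrained $\ell_p$ problem. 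You instead use only lower semicontinuity of $g$, obtain the upper estimate on $g(\bx^{k_j}+\bd^{k_j})$ by testing the subproblem's optimality against $\bu=\tilde{\bx}-\bx^{k_j}$, and transfer it back to $\tilde{\bx}$ via the convex combination $\bx^{k_j+1}=(1-t_{k_j})\bx^{k_j}+t_{k_j}(\bx^{k_j}+\bd^{k_j})$ together with $g(\bx^{k_j})=\Psi(\bx^{k_j})-f(\bx^{k_j})\to\zeta-f(\tilde{\bx})$; this is the standard Bolte--Sabach--Teboulle-style device and it buys genuine coverage of the extended-valued case. The price is that you must invoke the uniform stepsize bound $t_k\geq t_{\min}>0$ (to keep $t^*>0$ so the limit inequality can be divided through), but the paper already leans on exactly that bound in Theorem~\ref{theorem:global-subsequential-convergence}(ii), so you are not assuming anything the paper does not. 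In short: your proof is strictly more careful than the printed one and closes a small gap in it; the printed one is shorter under the implicit hypothesis $\dom g=\real^n$.
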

\begin{proof}
    (i) The sequence $\{\Psi(\x^k)\}_{k=0}^\infty$ is bounded and non-increasing due to Assumption~\ref{assu:function}(iv) and $\Psi(\x^{k+1})\leq\Psi(\x^k)$ (see the proof of Theorem~\ref{theorem:global-subsequential-convergence}).
    Therefore, $\zeta := \lim_{k\to\infty}\Psi(\x^k)$ exists.

    (ii) Let $\tilde{\x}\in\Omega$ be any accumulation point of $\{\x^k\}_{k=0}^\infty$ and let $\{\x^{k_j}\}_{j=0}^\infty$ be a subsequence such that $\lim_{j\to\infty}\x^{k_j}=\tilde{\x}$.
    Note that $\tilde{\x}$ is a stationary point from Theorem~\ref{theorem:global-subsequential-convergence}(iii).
    From Assumption~\ref{assu:function}(ii) and (iii), $f$ and $g$ are continuous on $\interior\dom\phi = \R^n$.
    Therefore, we immediately obtain $\lim_{j\to\infty}\Psi(\x^{k_j}) = \Psi(\tilde{\x}^*) = \zeta$.
    Since $\tilde{\x}$ is arbitrary, $\Psi\equiv\zeta$ on $\Omega$.
\end{proof}

\subsection{Global Convergence: Special Case \texorpdfstring{$g\equiv0$}{g=0}}\label{subsec:global-convergence}
In the previous subsection, we have shown that any accumulation point of a sequence generated by ABPG is a stationary point of~\eqref{prob:minimization}.
We next establish that the sequence converges to a stationary point of~\eqref{prob:minimization}.
From the first-order optimality condition of~\eqref{subprob:abpg}, it follows that
\begin{align}
    \zero\in\nabla f(\x^{k-1}) + \partial g(\x^{k-1}+\d^{k-1})+\frac{1}{\lambda}\nabla^2\phi(\x^{k-1})\d^{k-1}.\label{cond:first-order-dk}
\end{align}
In this subsection, we focus on the special case $g\equiv0$ because the current problem setting that $g$ may not be $\mathcal{C}^1$ makes it difficult to establish the global convergence using \eqref{cond:first-order-dk}.
If $\nabla^2\phi(\bm{x}^k)$ is nonsingular and $g\equiv0$, Subproblem~\eqref{subprob:abpg} is always solved in a closed-form expression, given by
\begin{align*}
    \d^k = -\lambda\nabla^2\phi(\bm{x}^k)^{-1}\nabla f(\x^k).
\end{align*}
In this case, ABPG is an approximation of the mirror descent~\cite{Nemirovski1983-yw}.

\begin{theorem}[Global convergence of ABPG]
    Suppose that Assumptions~\textup{\ref{assu:function}}, \textup{\ref{assu:abpg-mapping}},~\textup{\ref{assu:l-smad}}, \textup{\ref{assu:strongly-convex}}, and \textup{\ref{assu:locally-lip-level-bounded}} hold and that $\Psi$ is a KL function.
    Let $\{\x^k\}_{k=0}^\infty$ be a sequence generated by ABPG for solving~\eqref{prob:minimization}.
    Then, the following statements hold when $g \equiv 0$:
    \begin{enumerate}
        \item $\lim_{k\to\infty}\dist(\zero,\partial\Psi(\x^k))=0$.
        \item The sequence $\{\x^k\}_{k=0}^\infty$ converges to a stationary point of~\eqref{prob:minimization}; moreover, $\sum_{k=1}^\infty\|\x^k - \x^{k-1}\| < \infty$.
    \end{enumerate}
\end{theorem}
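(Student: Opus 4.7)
The plan is to establish the two statements in sequence, with the argument for (i) serving as the relative-error ingredient needed in (ii). The key simplification offered by $g\equiv 0$ is that the first-order optimality condition~\eqref{cond:first-order-dk} for subproblem~\eqref{subprob:abpg} becomes the equation
\begin{align*}
    \nabla f(\bx^{k-1}) + \frac{1}{\lambda}\nabla^2\phi(\bx^{k-1})\bd^{k-1} = \bzero,
\end{align*}
while the limiting subdifferential of the objective reduces to $\partial\Psi(\bx^k) = \{\nabla f(\bx^k)\}$, so that $\dist(\bzero,\partial\Psi(\bx^k)) = \|\nabla f(\bx^k)\|$.

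For part (i), I would rewrite
\begin{align*}
    \nabla f(\bx^k) = [\nabla f(\bx^k) - \nabla f(\bx^{k-1})] - \tfrac{1}{\lambda}\nabla^2\phi(\bx^{k-1})\bd^{k-1}.
\end{align*}
Because $\{\bx^k\}$ is bounded by Theorem~\ref{theorem:global-subsequential-convergence}(i) and $\nabla f$ is Lipschitz continuous on bounded sets by Assumption~\ref{assu:locally-lip-level-bounded}(i), the first bracket is controlled by $L_f t_{k-1}\|\bd^{k-1}\|$. Since $\phi\in\mathcal{C}^2$, the operator $\nabla^2\phi$ is also bounded on bounded sets, so the second term is dominated by $(A/\lambda)\|\bd^{k-1}\|$. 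Combining these with $\|\bd^k\|\to 0$ from Theorem~\ref{theorem:global-subsequential-convergence}(ii) yields $\|\nabla f(\bx^k)\|\to 0$ and hence statement (i).

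For part (ii), I would follow the now-standard KL-based convergence framework of~\cite{Attouch2010-tu,Bolte2014-td}, which requires three ingredients along the sequence: a sufficient-decrease inequality of the form $\Psi(\bx^{k-1}) - \Psi(\bx^k) \geq c_1\|\bx^k - \bx^{k-1}\|^2$, a relative-error bound $\dist(\bzero,\partial\Psi(\bx^k))\leq c_2\|\bx^k - \bx^{k-1}\|$, and continuity of $\Psi$ along the iterates. Using $\|\bx^k - \bx^{k-1}\| = t_{k-1}\|\bd^{k-1}\|$ together with a uniform lower bound $t_{k-1}\geq t_{\min} > 0$, inequality~\eqref{ineq:sufficient-decreasing-euc} immediately yields sufficient decrease with $c_1 = \alpha\sigma/(2\lambda t_{\min})$, while the computation in (i) gives $\|\nabla f(\bx^k)\| \leq (L_f + A/(\lambda t_{\min}))\|\bx^k - \bx^{k-1}\|$. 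Continuity is supplied by Proposition~\ref{prop:subsequential-objective}, and $\Omega$ is a compact set on which $\Psi\equiv\zeta$. I would then invoke the uniformized KL property (Lemma~\ref{lemma:uniformized-kl}) on $\Omega$ with the concave desingularizer $\psi$, apply the concavity inequality $\psi(a)-\psi(b)\geq\psi'(a)(a-b)$ to $a = \Psi(\bx^{k-1})-\zeta$ and $b = \Psi(\bx^k)-\zeta$, and combine with the two ingredients above to obtain a telescoping bound of the form $\|\bx^k - \bx^{k-1}\| \leq C[\psi(\Psi(\bx^{k-1})-\zeta) - \psi(\Psi(\bx^k)-\zeta)] + \tfrac{1}{2}\|\bx^{k-1}-\bx^{k-2}\|$. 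Summing over $k$ yields $\sum_k\|\bx^k-\bx^{k-1}\|<\infty$, so $\{\bx^k\}$ is Cauchy, and its unique limit is a stationary point by Theorem~\ref{theorem:global-subsequential-convergence}(iii).

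The main obstacle will be rigorously pinning down the uniform lower bound $t_k\geq t_{\min}$. Lemma~\ref{lemma:well-defined-line-search} guarantees that backtracking terminates as soon as $t_k \leq \min\{1,(1-\alpha)/(\beta\lambda L)\}$, but $\beta$ there depends on $\bx^k$ through $\nabla^2\phi(\bx^k + sd^k)$. To obtain a $\beta$ valid for all $k$, I would exploit that $\{\bx^k\}$ lives in a bounded set (Theorem~\ref{theorem:global-subsequential-convergence}(i)) together with $\|\bd^k\|\to 0$ to confine the line-search trajectories $\{\bx^k + s\bd^k : s\in[0,1]\}$ to a fixed compact subset of $C = \real^n$; by continuity of $\nabla^2\phi$ and positive definiteness on this compact set, a single constant $\beta$ suffices. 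The backtracking rule $t_k \leftarrow \eta t_k$ then guarantees $t_k \geq \eta\min\{1,(1-\alpha)/(\beta\lambda L)\} =: t_{\min} > 0$, closing the loop.
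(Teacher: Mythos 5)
Your proposal is correct and follows essentially the same route as the paper: the relative-error bound $\dist(\bzero,\partial\Psi(\bx^k))\leq A_0\|\bd^{k-1}\|$ from the optimality condition plus Lipschitz continuity of $\nabla f$ on bounded sets, the sufficient decrease~\eqref{ineq:sufficient-decreasing-euc}, the uniformized KL property on $\Omega$, concavity of $\psi$, and an AM--GM telescoping argument yielding a Cauchy sequence. The one place you go beyond the paper is in justifying the uniform lower bound $t_k\geq t_{\min}$ via compactness of the line-search trajectories and continuity of $\nabla^2\phi$; the paper simply assumes this bound (``if necessary, assume that there exists $t_{\min}>0$\ldots''), so your added argument is a welcome strengthening rather than a deviation.
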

\begin{proof}
    (i) Since $\{\x^k\}_{k=0}^\infty$ is bounded and $\Omega$ is the set of accumulation points of $\{\x^k\}_{k=0}^\infty$, it holds that
    \begin{align*}
        \lim_{k\to\infty}\dist(\x^k,\Omega)=0.
    \end{align*}
    We also have $\Omega\subset\mathcal{X}$ by Theorem~\ref{theorem:global-subsequential-convergence}(iii) and Definition~\ref{def:set-stationary}.
    Thus, for any $\varepsilon> 0$, there exists a positive integer $k_0 > 0$ such that $\dist(\x^k,\Omega) < \varepsilon$ for all $k\geq k_0$.

    The subdifferential of $\Psi$ at $\x^k$ for any $k \geq k_0$ is $\partial\Psi(\x^k) = \{\nabla f(\x^k)\}$ due to $g\equiv0$.
    Using Assumption~\ref{assu:locally-lip-level-bounded}(i) and~\eqref{cond:first-order-dk} with $g\equiv0$, there exists $A_0 > 0$ such that
    \begin{align*}
        \|\nabla f(\x^k)\| &= \|\nabla f(\x^k) - \nabla f(\x^{k-1}) + \nabla f(\x^{k-1})\|\\
        &\leq \|\nabla f(\x^k) - \nabla f(\x^{k-1})\| + \|\nabla f(\x^{k-1})\|\\
        &= \|\nabla f(\x^k) - \nabla f(\x^{k-1})\| + \frac{1}{\lambda}\|\nabla^2\phi(\x^{k-1})\d^{k-1}\|\\
        &\leq A_0\|\d^{k-1}\|,
    \end{align*}
    which is equivalent to
    \begin{align}
        \dist(\zero,\partial\Psi(\x^k))\leq A_0\|\d^{k-1}\|,\label{ineq:dist-d}
    \end{align}
    where $k \geq k_0 + 1$.
    Using Theorem~\ref{theorem:global-subsequential-convergence}(ii), we prove the desired result.

    (ii) If there exists an integer $\tilde{k} \geq 0$ for which $\Psi(\x^{\tilde{k}}) = \Psi(\tilde{\x}) = \zeta$, then~\eqref{ineq:sufficient-decreasing-euc} implies $\x^{\tilde{k} + 1} = \x^{\tilde{k}}$.
    This means that $\{\x^k\}_{k=0}^\infty$ finitely converges to a stationary point.

    We next consider the case in which $\Psi(\x^k) > \zeta$ for all $k \geq 0$.
    Since Proposition~\ref{prop:subsequential-objective}(ii) and $\{\Psi(\x^k)\}_{k=0}^\infty$ is non-increasing, it holds that $\Psi(\tilde{\x}) < \Psi(\x^k)$.
    Again using Proposition~\ref{prop:subsequential-objective}(ii), for any $\eta > 0$, there exists a nonnegative integer $k_1 \geq 0$ such that $\Psi(\x^k) < \Psi(\tilde{\x}) + \eta$ for all $ k > k_1$.
    Using Lemma~\ref{lemma:uniformized-kl}, for any $k > l:=\max\{k_0,k_1\}$, we have
    \begin{align}
        \psi'(\Psi(\x^k) - \Psi(\tilde{\x}))\dist(\zero,\partial\Psi(\x^k))\geq1.\label{ineq:uniform-kl-psi}
    \end{align}
    Since $\psi$ is a concave function, it follows that, for any $k \geq l$,
    \begin{align*}
        (\psi(\Psi_k) - \psi(\Psi_{k+1})\dist(\zero,\partial\Psi(\x^k))
        &\geq\psi'(\Psi_k)\dist(\zero,\partial\Psi(\x^k))\left(\Psi(\x^k)-\Psi(\x^{k+1})\right)\\
        &\geq\Psi(\x^k)-\Psi(\x^{k+1})\\
        &\geq A_1\|\d^k\|^2,
    \end{align*}
    where $\Psi_k := \Psi(\x^k) - \zeta$, the second inequality holds due to~\eqref{ineq:uniform-kl-psi}, and the last inequality holds due to~\eqref{ineq:sufficient-decreasing-euc}.
    Substituting~\eqref{ineq:dist-d} into the above inequality, we have
    \begin{align}
        \|\d^k\|^2\leq\frac{A_0}{A_1}\left(\psi(\Psi_k) - \psi(\Psi_{k+1})\right)\|\d^{k-1}\|.\label{ineq:d-sq-obj}
    \end{align}
    Taking the square root of~\eqref{ineq:d-sq-obj} and using the inequality of arithmetic and geometric means, it holds that
    \begin{align*}
        \|\d^k\|&\leq\sqrt{\frac{A_0}{A_1}\left(\psi(\Psi_k) - \psi(\Psi_{k+1})\right)}\sqrt{\|\d^{k-1}\|}\\
        &\leq\frac{A_0}{2A_1}\left(\psi(\Psi_k) - \psi(\Psi_{k+1})\right) + \frac{1}{2}\|\d^{k-1}\|,
    \end{align*}
    that is,
    \begin{align}
        \frac{1}{2}\|\d^k\|\leq\frac{A_0}{2A_1}\left(\psi(\Psi_k) - \psi(\Psi_{k+1})\right) + \frac{1}{2}\|\d^{k-1}\| - \frac{1}{2}\|\d^k\|.\label{ineq:d-obj-bounded}
    \end{align}
    Summing~\eqref{ineq:d-obj-bounded} from $k = l$ to $\infty$, we obtain
    \begin{align*}
        \sum_{k=l}^\infty\|\d^k\| \leq \frac{A_0}{2A_1}
        \psi(\Psi_l)
        + \frac{1}{2}\|\d^{l-1}\| < \infty.
    \end{align*}
    We have $\|\x^{k+1} - \x^k\| = t_k\|\d^k\| \leq \|\d^k\|$ due to $t_k\in(0,1]$ and then know
    \begin{align*}
        \sum_{k=l}^\infty\|\x^{k+1} - \x^k\| \leq\sum_{k=l}^\infty\|\d^k\| < \infty,
    \end{align*}
    which implies that $\sum_{k=1}^\infty\|\x^{k+1} - \x^k\|< \infty$, \ie, the sequence $\{\x^k\}_{k=0}^\infty$ is a Cauchy sequence.
    Consequently, together with Theorem~\ref{theorem:global-subsequential-convergence}(iii), the sequence $\{\x^k\}_{k=0}^\infty$ converges to a stationary point of~\eqref{prob:minimization}.
\end{proof}

Finally, we show the rate of convergence result.
Its proof is almost the same as the existing one (see, for example,~\cite[Theorem 2]{Attouch2009-wf},~\cite{Attouch2010-nr}, and~\cite[Theorem 4]{Takahashi2022-ml}).
\begin{theorem}[Rate of convergence]
    Suppose that Assumptions~\textup{\ref{assu:function}},~\textup{\ref{assu:abpg-mapping}},~\textup{\ref{assu:l-smad}},~\textup{\ref{assu:strongly-convex}}, and~\textup{\ref{assu:locally-lip-level-bounded}} hold.
    Let $\{\x^k\}_{k=0}^\infty$ be a sequence generated by ABPG for solving~\eqref{prob:minimization} and let $\tilde{\x}\in\mathcal{X}$ be a stationary point of~\eqref{prob:minimization}.
    Suppose further that $\Psi$ is a KL function with $\psi$ in the KL inequality~\eqref{ineq:kl} taking form $\psi(s) = cs^{1-\theta}$ for some $\theta\in[0,1)$ and $c > 0$.
    Then, the following statements hold when $g \equiv 0$:
    \begin{enumerate}
        \item If $\theta = 0$, then the sequence $\{\x^k\}_{k=0}^\infty$ converges to $\tilde{\x}$ in a finite number of steps;
        \item If $\theta\in(0,1/2]$, then there exist $c_1 > 0$ and $\eta\in[0,1)$ such that
        \begin{align*}
            \|\x^k - \tilde{\x}\| < c_1\eta^k;
        \end{align*}
        \item If $\theta\in(1/2, 1)$, then there exists $c_2 > 0$ such that 
        \begin{align*}
            \|\x^k - \tilde{\x}\| < c_2k^{-\frac{1-\theta}{2\theta - 1}}.
        \end{align*}
    \end{enumerate}
\end{theorem}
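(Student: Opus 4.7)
The plan is to follow the classical Kurdyka--\L ojasiewicz template used in the references cited above. Three facts already assembled in this section will do most of the work: (a) the sufficient decrease $\Delta_k-\Delta_{k+1}\geq A_1\|\bd^k\|^2$ where $\Delta_k := \Psi(\bx^k)-\zeta$; (b) the relative error bound $\dist(\bzero,\partial\Psi(\bx^k))\leq A_0\|\bd^{k-1}\|$ (inequality~\eqref{ineq:dist-d}); and (c) the tail estimate $\sum_{j\geq k}\|\bd^j\|\leq \tfrac{A_0}{2A_1}\psi(\Delta_k)+\tfrac{1}{2}\|\bd^{k-1}\|$ obtained in the proof of the preceding theorem. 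Since $\|\bx^k-\tilde{\bx}\|\leq\sum_{j\geq k}\|\bx^{j+1}-\bx^j\|\leq\sum_{j\geq k}\|\bd^j\|$, it suffices to bound the right-hand side of (c) under the ansatz $\psi(s)=cs^{1-\theta}$.

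The first step is to derive a scalar recursion for $\Delta_k$. Combining the KL inequality $\psi'(\Delta_{k+1})\dist(\bzero,\partial\Psi(\bx^{k+1}))\geq 1$, the explicit formula $\psi'(s)=c(1-\theta)s^{-\theta}$, and (b) applied at index $k+1$ gives $\|\bd^k\|\geq \Delta_{k+1}^{\theta}/[c(1-\theta)A_0]$. Substituting into (a) produces
\[
    \Delta_k-\Delta_{k+1}\geq M\,\Delta_{k+1}^{2\theta},\qquad M:=\frac{A_1}{[c(1-\theta)A_0]^2}.
\]
All three rate claims then reduce to analyzing this scalar recursion and transferring its decay to $\|\bx^k-\tilde{\bx}\|$ through (c), aided by $\|\bd^{k-1}\|\leq (\Delta_{k-1}/A_1)^{1/2}$ from (a).

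For $\theta=0$, the KL inequality degenerates to $\dist(\bzero,\partial\Psi(\bx^k))\geq 1/c$, which together with $\dist(\bzero,\partial\Psi(\bx^k))\leq A_0\|\bd^{k-1}\|\to 0$ from Theorem~\ref{theorem:global-subsequential-convergence}(ii) forces $\bd^{k-1}=\bzero$ after finitely many iterations, yielding finite termination at $\tilde{\bx}$. For $\theta\in(0,1/2]$, because $2\theta\leq 1$ and $\Delta_k\to 0$, eventually $\Delta_{k+1}^{2\theta}\geq \Delta_{k+1}$ up to a constant, so the recursion reduces to $\Delta_{k+1}\leq \Delta_k/(1+M')$ and $\Delta_k$ decays geometrically; inserting this into (c) produces $\|\bx^k-\tilde{\bx}\|\leq c_1\eta^k$ for suitable $c_1,\eta$. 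For $\theta\in(1/2,1)$, a standard discrete Gronwall-type argument applied to $\Delta_k-\Delta_{k+1}\geq M\Delta_{k+1}^{2\theta}$ (via comparison with the integral of $s\mapsto s^{-2\theta}$) yields $\Delta_k\leq C k^{-1/(2\theta-1)}$; since $1-\theta<1/2$ the dominant term in (c) is $\psi(\Delta_k)=c\Delta_k^{1-\theta}\lesssim k^{-(1-\theta)/(2\theta-1)}$, which gives the advertised bound.

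The main technical obstacle is the discrete Gronwall step in the $\theta\in(1/2,1)$ case, because the ratio $\Delta_{k+1}/\Delta_k$ is not \emph{a priori} bounded away from zero; the standard remedy is a dichotomy separating iterations with $\Delta_{k+1}\geq \tfrac{1}{2}\Delta_k$ (handled by mean-value comparison of $s^{1-2\theta}$) from those with $\Delta_{k+1}<\tfrac{1}{2}\Delta_k$ (handled trivially by geometric contraction). Everything else is mechanical and mirrors the pattern of the cited proofs.
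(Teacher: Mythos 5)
Your proposal is correct and follows exactly the classical Attouch--Bolte template that the paper itself defers to (the paper omits the proof, citing \cite{Attouch2009-wf} and \cite{Takahashi2022-zt}): the same three ingredients (sufficient decrease, the relative-error bound \eqref{ineq:dist-d}, and the telescoped tail estimate), the same scalar recursion $\Delta_k-\Delta_{k+1}\geq M\Delta_{k+1}^{2\theta}$, and the same case analysis, including the correct observation that the $\psi(\Delta_k)$ term dominates the $\|\bd^{k-1}\|$ term when $\theta>1/2$. No gaps; the dichotomy you flag for the $\theta\in(1/2,1)$ case is precisely the standard remedy used in the cited proofs.
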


\section{Numerical Experiments}\label{sec:numerical-experiments}
In this section, we conducted numerical experiments to examine the performance of our proposed algorithm.
Note that KL exponents for problems in numerical experiments are unknown.
All numerical experiments were performed in Python 3.9 on a MacBook Air with an Apple M2 and 16GB LPDDR5 memory. 
\subsection{The \texorpdfstring{$\ell_p$}{lp} Regularized Least Squares Problem}\label{sec:the-lp-regularized-least-squares-problem}
We consider the sparse $\ell_p$ (with $p$ slightly larger than $1$) regularized least squares problem~\cite{Chung2019-kg,Wen2016-fg}:
\begin{align}
    \min_{\x\in\R^n}\quad \frac{1}{2}\|\A\x - \b\|^2 +\frac{\theta_p}{p}\|\x\|_p^p,\label{prob:lp}
\end{align}
where $\A\in\R^{m\times n},\b\in\R^m$, and $\theta_p > 0$.
Let $g\equiv0$,
\begin{align}
    f(\x) = \frac{1}{2}\|\A\x - \b\|^2 +\frac{\theta_p}{p}\|\x\|_p^p,\quad\text{and}\quad\phi(\x) = \frac{1}{2}\|\x\|^2 + \frac{1}{p}\|\x\|_p^p.\label{def:lp-f-phi}
\end{align}
Note that $\nabla f$ and $\nabla \phi$ are not Lipschitz continuous for a general $p$ and that when $p > 1$, $\phi$ and $f$ are $\mathcal{C}^1$.
We have $\nabla\phi(\x) = \x + \sgn(\x)|\x|^{p-1}$.

\begin{proposition}
    Let $f$ and $\phi$ be as defined above.
    Then, for any $L > 0$ satisfying
    \begin{align}
        L \geq \lambda_{\max}(\A^{\mathsf{T}}\A) + \theta_p, \label{ineq:l-smad-lp}
    \end{align}
    the functions $L\phi - f$ and $L\phi + f$ are convex on $\R^n$.
    Therefore, the pair $(f,\phi)$ is $L$-smad.
\end{proposition}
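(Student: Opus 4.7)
The plan is to decompose $L\phi \pm f$ into sums of functions that are each visibly convex under the stated bound on $L$. Since $\|\bx\|_p^p = \sum_{i=1}^n |x_i|^p$ is convex for every $p\geq 1$ (it is a separable sum of one-dimensional convex functions), the $\ell_p^p$ contributions can be peeled off, leaving only the quadratic pieces to worry about; those are handled by the standard Hessian condition $L\bI \succeq \bA^{\mathsf{T}}\bA$.

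First, I would write out
\begin{align*}
    L\phi(\bx) + f(\bx) = \tfrac{L}{2}\|\bx\|^2 + \tfrac{1}{2}\|\bA\bx - \bb\|^2 + \tfrac{L+\theta_p}{p}\|\bx\|_p^p.
\end{align*}
All three summands are convex on $\real^n$: the first is a PSD quadratic, the second is convex because $\bA^{\mathsf{T}}\bA \succeq \bm{0}$, and the third is a nonnegative multiple of the convex function $\|\cdot\|_p^p$ (using $L + \theta_p > 0$). Hence $L\phi + f$ is convex for any $L\geq 0$, and in particular under~\eqref{ineq:l-smad-lp}.

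Next, for $L\phi - f$, I would write
\begin{align*}
    L\phi(\bx) - f(\bx) = \tfrac{1}{2}\bx^{\mathsf{T}}(L\bI - \bA^{\mathsf{T}}\bA)\bx + \bb^{\mathsf{T}}\bA\bx - \tfrac{1}{2}\|\bb\|^2 + \tfrac{L-\theta_p}{p}\|\bx\|_p^p.
\end{align*}
The quadratic part is convex iff $L\bI - \bA^{\mathsf{T}}\bA \succeq \bm{0}$, i.e.\ $L \geq \lambda_{\max}(\bA^{\mathsf{T}}\bA)$; the affine part is trivially convex; and the $\ell_p^p$ part is convex provided $L \geq \theta_p$. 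The hypothesis $L \geq \lambda_{\max}(\bA^{\mathsf{T}}\bA) + \theta_p$ implies both inequalities since $\theta_p > 0$ and $\lambda_{\max}(\bA^{\mathsf{T}}\bA) \geq 0$, so $L\phi - f$ is convex.

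There is no real obstacle here: the only mild care is that $\phi$ is only $\mathcal{C}^1$ (not $\mathcal{C}^2$) when $p\in(1,2)$, so~\eqref{ineq:l-smad-second-order} is not directly available; but the argument above uses convexity of $\|\cdot\|_p^p$ as a black box (which needs only $p\geq 1$) and does not require $\phi$ to be $\mathcal{C}^2$. The conclusion that the pair $(f,\phi)$ is $L$-smad then follows immediately from Definition~\ref{def:l-smad}.
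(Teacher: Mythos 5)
Your proof is correct and follows essentially the same route as the paper: both split off the term $\frac{L-\theta_p}{p}\|\bx\|_p^p$ (convex since $L\geq\theta_p$) and reduce the remaining question to convexity of $\frac{L}{2}\|\bx\|^2-\frac{1}{2}\|\bA\bx-\bb\|^2$, which the paper justifies via the Lipschitz constant $\lambda_{\max}(\bA^{\mathsf{T}}\bA)$ of the gradient of the least-squares term and which you justify by writing out the quadratic form $\frac{1}{2}\bx^{\mathsf{T}}(L\bI-\bA^{\mathsf{T}}\bA)\bx$ explicitly. Your closing remark that the argument needs only convexity of $\|\cdot\|_p^p$ for $p\geq1$, and not twice differentiability of $\phi$, is a valid and worthwhile observation but not a different method.
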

\begin{proof}
    Because $f$ and $\phi$ are convex, $L\phi + f$ is convex.
    We have 
    \begin{align*}
        L\phi(\x) - f(\x) &= L\left(\frac{1}{2}\|\x\|^2 + \frac{1}{p}\|\x\|_p^p\right) - \frac{1}{2}\|\A\x - \b\|^2 - \frac{\theta_p}{p}\|\x\|_p^p\\
        &=\frac{L}{2}\|\x\|^2 - \frac{1}{2}\|\A\x - \b\|^2 + \frac{L-\theta_p}{p}\|\x\|_p^p
    \end{align*}
    which is convex for any $\x\in\R^n$ because $\frac{L}{2}\|\x\|^2 - \frac{1}{2}\|\A\x - \b\|^2$ is convex due to Lipschitz continuity of the gradient of $\frac{1}{2}\|\A\x - \b\|^2$  with $\lambda_{\max}(\A^{\mathsf{T}}\A)$ (see, for example,~\cite[Example 2.2]{Beck2009-kr}) and $\frac{L-\theta_p}{p}\|\x\|_p^p$ is convex due to $L-\theta_p\geq0$.
    Therefore, the pair $(f,\phi)$ is $L$-smad.
\end{proof}
The optimality condition of~\eqref{def:bpg-mappring} is equivalent to a $(p-1)$th polynomial equation.
The BPG mapping $\mathcal{T}_\lambda(\x)$ is not computed in a closed-form expression for a general $p > 1$. 

Using the above $\phi$, we obtain Subproblem~\eqref{subprob:abpg} in a closed form.
We have 
\begin{align*}
    \nabla^2\phi(\bm{x}) &= \bm{I} + (p-1)\diag(|\bm{x}|^{p-2}).
\end{align*}
Because the inverse matrix of $\nabla^2\phi(\bm{x})$ is also a diagonal matrix, Subproblem~\eqref{subprob:abpg} is expressed in a closed form.
Note that $\nabla^2\phi(\bm{x})$ is not continuous on $x_i = 0$ for some $i = 1,\ldots, n$ with $p \in(1,2)$, 
while it is continuous on $\R^n$ with $p\geq 2$.

\begin{remark}\label{remark:lp}
    While we handled \eqref{prob:lp} as a case of $g\equiv 0$, 
    we can obtain a closed-form solution of~\eqref{subprob:abpg} even when $g\not\equiv0$, e.g.,
    $g(\x) = \theta_1\|\x\|_1$ with $\theta_1 > 0$. For $p > 1$, as a subproblem of \eqref{prob:lp} with the additional term $g(\x) = \theta_1\|\x\|_1$, we obtain
    \begin{align*}
        \d^k &= \left(\argmin_{x\in\R}\left\{v_i x + \theta_1|x| + \frac{1}{2\lambda}\left(1+(p-1)|x^k_i|^{p-2}\right)(x-x^k_i)^2\right\}\right)_{i=1}^n - \x^k\\
        &=\left(\soft_{\theta_1 s_i}\left(x^k_i - s_iv_i\right)\right)_{i=1}^n - \x^k,
    \end{align*}
    where $v_i$ is the $i$th element of $\nabla f(\x^k)$, $\soft_s(\cdot)$ is a soft-thresholding operator with $s > 0$, and $s_i = \frac{\lambda}{1+(p-1)|x^k_i|^{p-2}}$.
    Since $s_i = \frac{\lambda}{1+(p-1)|x^k_i|^{p-2}} = \frac{\lambda|x^k_i|^{2-p}}{|x^k_i|^{2-p} + p -1}$ for $p\in(1,2)$, $s_i = 0$ when $x_i^k = 0$.
\end{remark}

We compared ABPG with the proximal gradient algorithm (PG), the PG with line search (PGL), and the regularized Newton method (RN).
We set $\alpha = 0.99$ and $\eta = 0.9$ for ABPG.
Although $\nabla f$ is not Lipschitz continuous, we used $1/L$ as stepsizes of PG and as an initial stepsize of PGL using the $L$ of the $L$-smad property given by~\eqref{ineq:l-smad-lp}.
This implies that PG would lose convergence guarantees based on the global-Lipschitz-gradient assumption.
We used PGL with the backtracking procedure~\cite[p.283]{Beck2017-qc}.
We used RN with the fixed constants of regularization.
Note that RN is equivalent to ABPG with $\phi = f + \frac{\kappa}{2}\|\cdot\|^2$. The parameter $\kappa$ was fixed to $10^{-5}> 0$ in this experiment.
We generated the initial point $\x^0\in\R^n$ from i.i.d. Gaussian distribution.
The maximum iteration is 1000 and the termination condition is $\|\x^k - \x^{k-1}\|\leq10^{-6}$.

\begin{figure}[!tbp]
    \begin{minipage}[b]{0.49\linewidth}
    \centering
    \includegraphics[width=\textwidth]{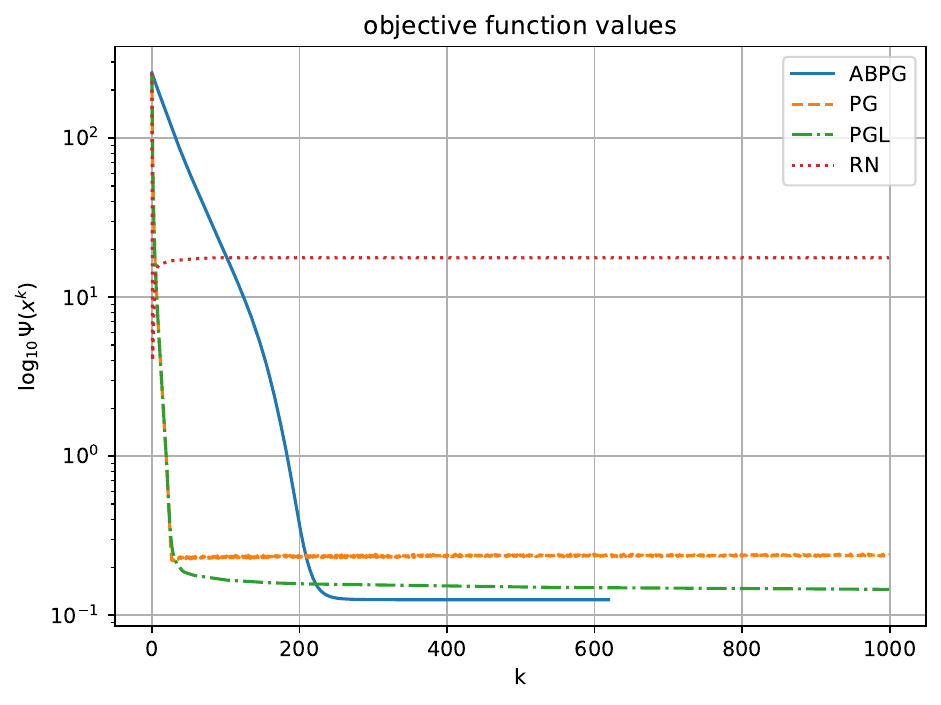}
    \subcaption{Objective function values ($p=1.1$)}
    \label{fig:lp-reg-obj-p=1.1}
    \end{minipage}
    \hfill
    \begin{minipage}[b]{0.49\linewidth}
    \centering
    \includegraphics[width=\textwidth]{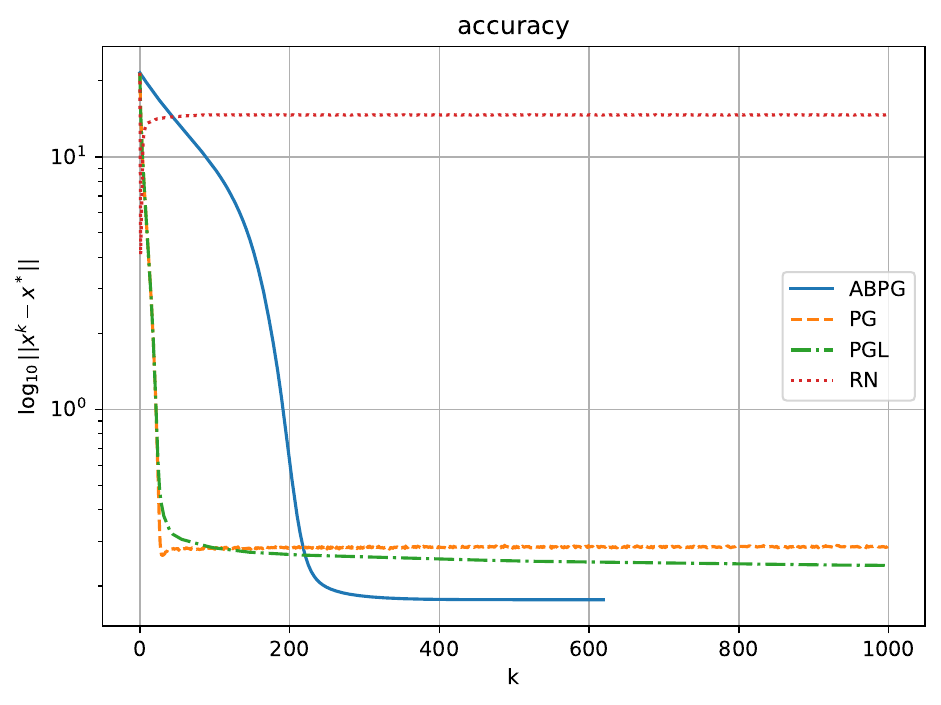}
    \subcaption{Accuracy ($p=1.1$)}
    \label{fig:lp-reg-acc-p=1.1}
    \end{minipage}
    \begin{minipage}[b]{0.49\linewidth}
    \centering
    \includegraphics[width=\textwidth]{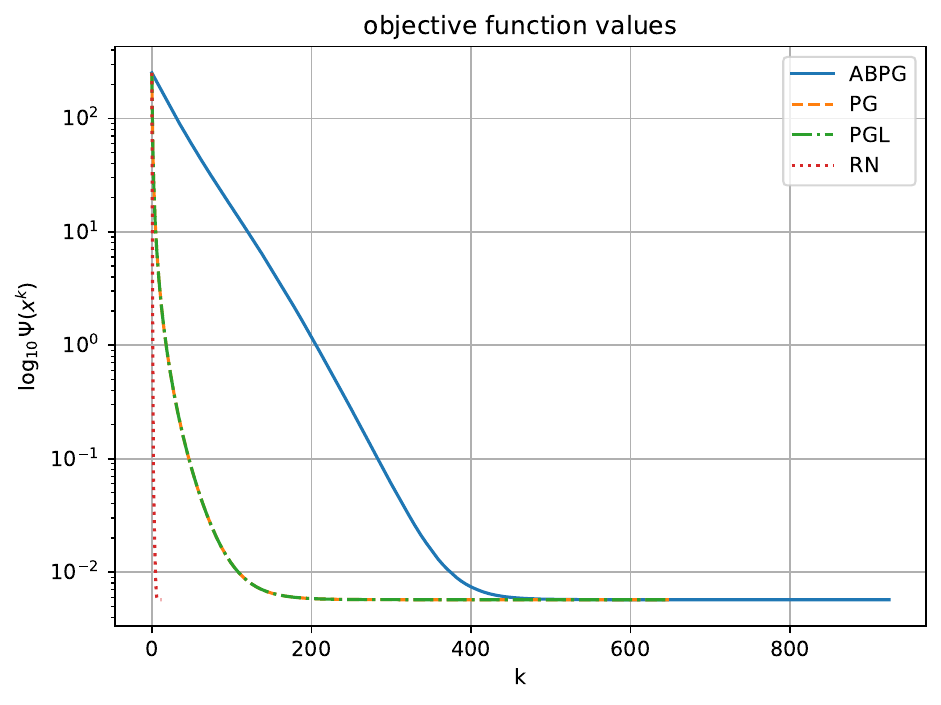}
    \subcaption{Objective function values ($p=3.0$)}
    \label{fig:lp-reg-obj-p=3.0}
    \end{minipage}
    \hfill
    \begin{minipage}[b]{0.49\linewidth}
    \centering
    \includegraphics[width=\textwidth]{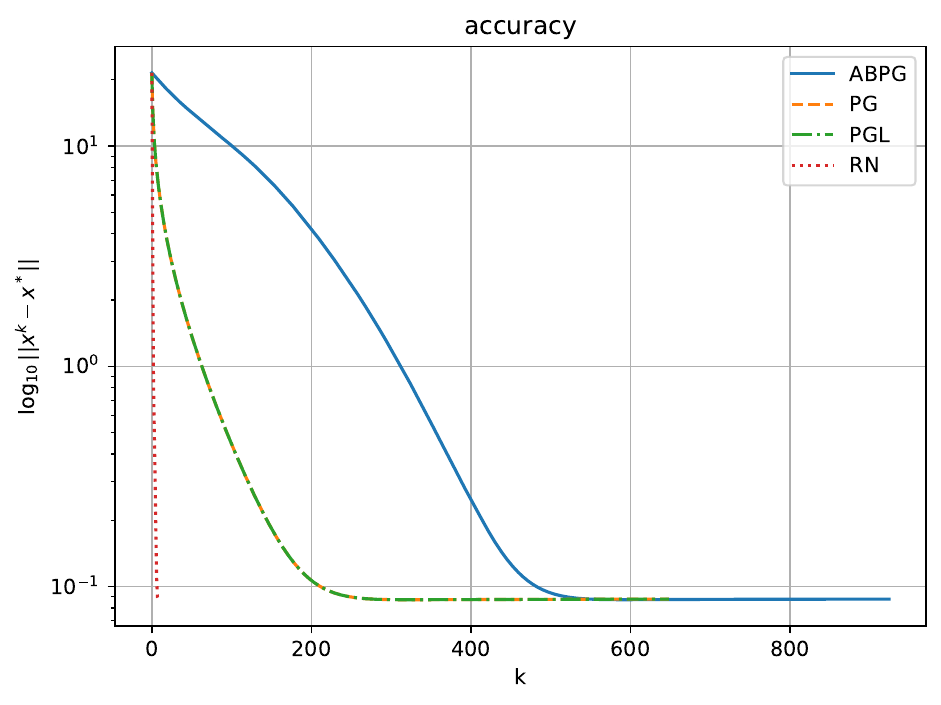}
    \subcaption{Accuracy ($p=3.0$)}
    \label{fig:lp-reg-acc-p=3.0}
    \end{minipage}
    \caption{Comparison with ABPG (blue), PG (orange), PGL (green), and RN (red) on the $\ell_p$ regularized least squares problem~\eqref{prob:lp}.}
    \label{fig:lp-reg}
\end{figure}

\begin{figure}[!tbp]
    \begin{minipage}[b]{0.49\linewidth}
    \centering
    \includegraphics[width=\textwidth]{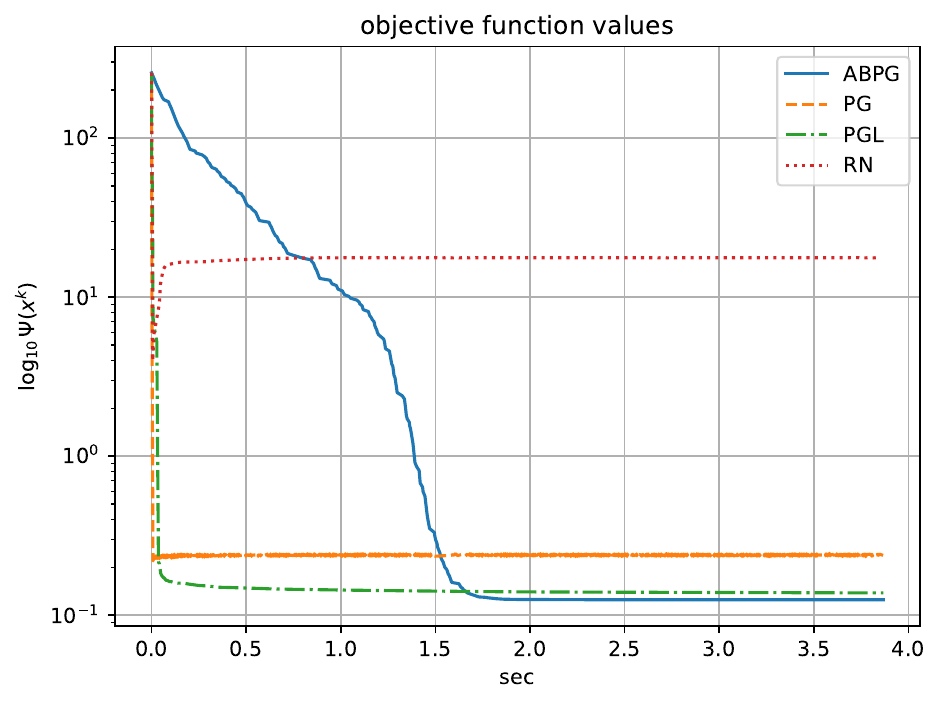}
    \subcaption{Objective function values ($p=1.1$)}
    \label{fig:lp-reg-obj-p=1.1-time}
    \end{minipage}
    \hfill
    \begin{minipage}[b]{0.49\linewidth}
    \centering
    \includegraphics[width=\textwidth]{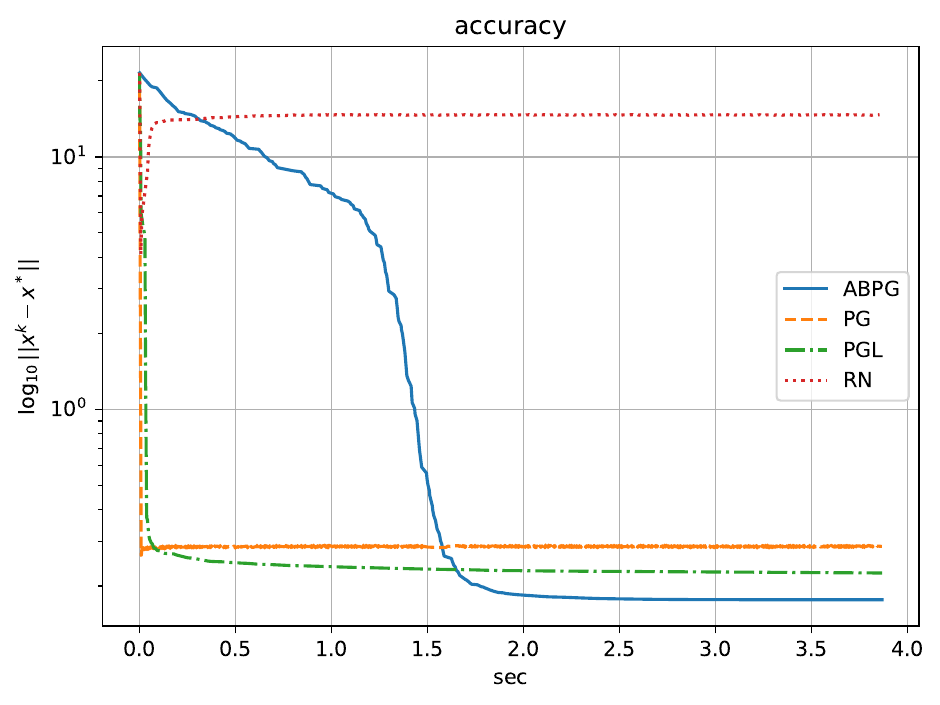}
    \subcaption{Accuracy ($p=1.1$)}
    \label{fig:lp-reg-acc-p=1.1-time}
    \end{minipage}
    \begin{minipage}[b]{0.49\linewidth}
    \centering
    \includegraphics[width=\textwidth]{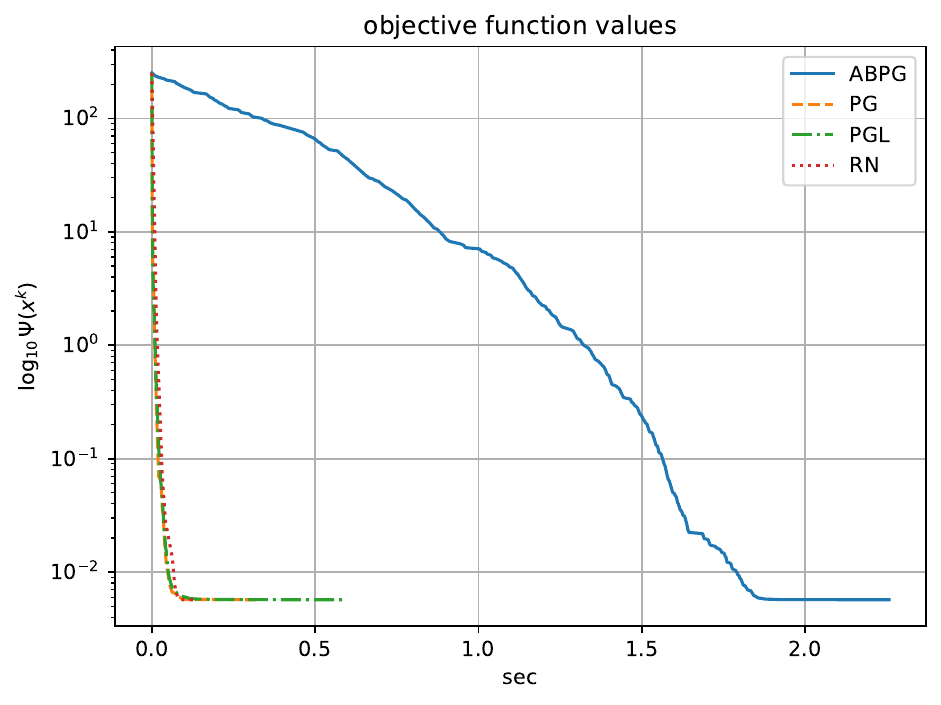}
    \subcaption{Objective function values ($p=3.0$)}
    \label{fig:lp-reg-obj-p=3.0-time}
    \end{minipage}
    \hfill
    \begin{minipage}[b]{0.49\linewidth}
    \centering
    \includegraphics[width=\textwidth]{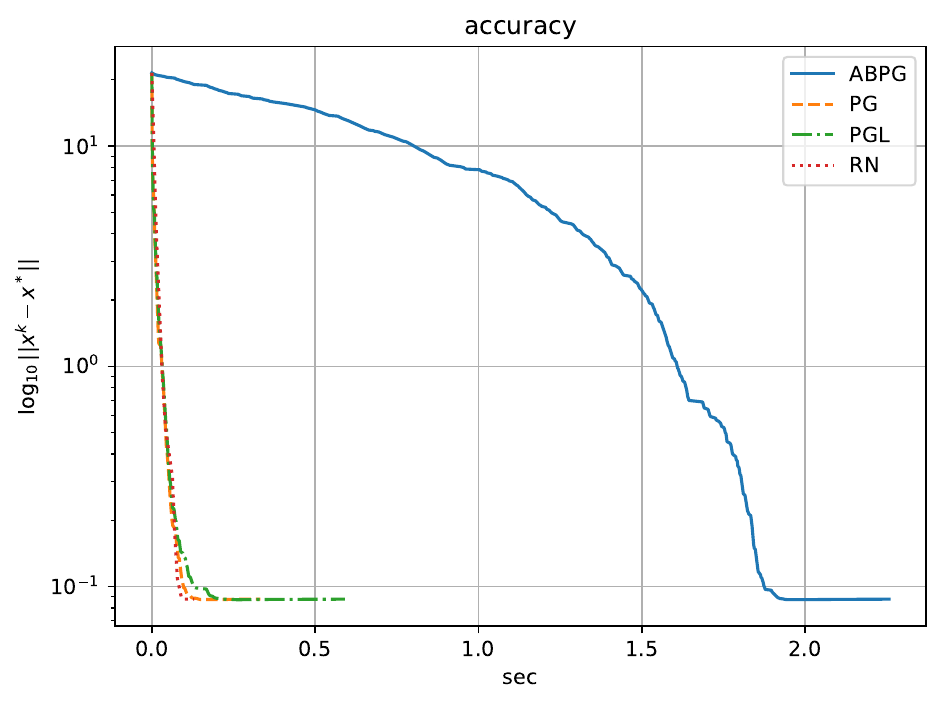}
    \subcaption{Accuracy ($p=3.0$)}
    \label{fig:lp-reg-acc-p=3.0-time}
    \end{minipage}
    \caption{Comparison with ABPG (blue), PG (orange), PGL (green), and RN (red) on the $\ell_p$ regularized least squares problem~\eqref{prob:lp}.}
    \label{fig:lp-reg-time}
\end{figure}

We generated the measurement $\A\in\R^{m\times n}$ and the ground truth $\x^*$ from i.i.d. Gaussian distribution, which has $5\%$ nonzero elements.
We set $\b = \A\x^*$.
For $(n,m) = (500, 800)$, $p=1.1, 3.0$, and $\theta_p = 0.05$, Figure~\ref{fig:lp-reg} shows the objective function values $\Psi(\x^k)$ and the accuracy $\|\x^k - \x^*\|$ at each iteration on a logarithmic scale.
Figures~\ref{fig:lp-reg-obj-p=1.1} and~\ref{fig:lp-reg-acc-p=1.1} show the objective function values and the accuracy at each $k$th iteration, and Figures~\ref{fig:lp-reg-obj-p=1.1-time}, and~\ref{fig:lp-reg-acc-p=1.1-time} show these on time axis.
When PG, PGL, and RN did not stop in 1000 iterations, we plotted their performance until the termination time of ABPG.
Figures~\ref{fig:lp-reg-obj-p=1.1},~\ref{fig:lp-reg-acc-p=1.1},~\ref{fig:lp-reg-obj-p=1.1-time}, and~\ref{fig:lp-reg-acc-p=1.1-time} show that ABPG outperforms other algorithms when $p = 1.1$.
Only ABPG stopped within 1000 iterations (around 600 iterations).
PG and RN did not decrease the objective function value at every iteration; the reason is probably that $\nabla f$ is not globally Lipschitz continuous or even locally Lipschitz continuous, more specifically, 
for $p = 1.1$, $\|\x\|_p^p$ does not have any Lipschitz continuous gradients on $(-1,1)^n$.
Stepsizes of PGL were small because PGL requires many line search procedures due to lacking the global Lipschitz continuity of $\nabla f$.
For $p=3.0$, Figures~\ref{fig:lp-reg-obj-p=3.0} and~\ref{fig:lp-reg-acc-p=3.0} show that ABPG is outperformed by other algorithms.
All algorithms stopped within 1000 iterations.
The objective function value and the accuracy of all algorithms at the termination were almost the same.
In this case, the backtracking procedure of PGL was not used.
For $p \geq 2$, $\|\x\|_p^p$ has Lipschitz continuous gradients on $(-1,1)^n$ and the backtracking procedure of PGL is not needed on $(-1,1)^n$.
Therefore, PG and PGL outperformed ABPG.

\begin{table}[!tbp]
\centering
\caption{Performance results over 50 random instances for $m = 1000$.
Numbers in boldface represent the best value among algorithms.
This is also used for Table~\ref{tab:lp-2000}.
``---" means that its value is too large.}
\label{tab:lp-1000}
\begin{tabular}{lllrrr}
\toprule
$m$ & $n$ & Algorithm &  Iteration &            Obj. &            Acc. \\\midrule
1000 & 100  & ABPG &   \textbf{554} &        \textbf{0.07502} &        \textbf{0.09667} \\
     &      & PG &  1000 &        0.12554 &        0.17612 \\
     &      & PGL &   970 &        0.07863 &        0.12067 \\
     &      & RN &  1000 &        0.23388 &        0.39102 \\
     & 200  & ABPG &   \textbf{580} &        \textbf{0.09696} &        \textbf{0.12952} \\
     &      & PG &  1000 &        0.16969 &        0.21781 \\
     &      & PGL &  1000 &        0.10451 &        0.16348 \\
     &      & RN &  1000 &        0.68147 &        0.94434 \\
     & 500  & ABPG &   \textbf{619} &        \textbf{0.13928} &        \textbf{0.19568} \\
     &      & PG &  1000 &        0.26540 &        0.31184 \\
     &      & PGL &  1000 &        0.15927 &        0.26231 \\
     &      & RN &  1000 &        7.66003 &        6.69885 \\
     & 1000 & ABPG &   \textbf{652} &        \textbf{0.17662} &        \textbf{0.26180} \\
     &      & PG &  1000 &        0.34935 &        0.38783 \\
     &      & PGL &  1000 &        0.22393 &        0.40126 \\
     % &      & RN &  1000 & 96780450.52586 & 13092200.51640\\
     &      & RN &  1000 & --- & ---\\
\bottomrule
\end{tabular}
\end{table}
% \begin{table}[!tbp]
% \caption{Performance results over 50 random instances for $m = 1500$.}
% \label{tab:lp-1500}
% \centering
% \begin{tabular}{lllrrr}
% \toprule
% $m$ & $n$ & Algorithm &  Iteration &            Obj. &            Acc. \\\midrule
% 1500 & 100  & ABPG &   \textbf{559} &        \textbf{0.07472} &        \textbf{0.09624} \\
%      &      & PG &  1000 &        0.13423 &        0.18965 \\
%      &      & PGL &   993 &        0.07830 &        0.12044 \\
%      &      & RN &  1000 &        0.21314 &        0.34350 \\
%      & 200  & ABPG &   \textbf{575} &        \textbf{0.09784} &        \textbf{0.12917} \\
%      &      & PG &  1000 &        0.18454 &        0.22922 \\
%      &      & PGL &  1000 &        0.10530 &        0.16071 \\
%      &      & RN &  1000 &        0.49942 &        0.67388 \\
%      & 500  & ABPG &   \textbf{608} &        \textbf{0.13887} &        \textbf{0.19383} \\
%      &      & PG &  1000 &        0.28288 &        0.31782 \\
%      &      & PGL &  1000 &        0.15766 &        0.24312 \\
%      &      & RN &  1000 &        3.12815 &        2.84851 \\
%      & 1000 & ABPG &   \textbf{637} &        \textbf{0.17760} &        \textbf{0.25729} \\
%      &      & PG &  1000 &        0.40157 &        0.40762 \\
%      &      & PGL &  1000 &        0.21889 &        0.35637 \\
%      &      & RN &  1000 &       48.72290 &       27.87499 \\
% \bottomrule
% \end{tabular}
% \end{table}
\begin{table}[!htbp]
\caption{Performance results over 50 random instances for $m = 2000$.}
\label{tab:lp-2000}
\centering
\begin{tabular}{lllrrr}
\toprule
$m$ & $n$ & Algorithm &  Iteration &            Obj. &            Acc. \\\midrule
2000 & 100  & ABPG &   \textbf{558} &        \textbf{0.07333} &        \textbf{0.09635} \\
     &      & PG &  1000 &        0.13971 &        0.20044 \\
     &      & PGL &   939 &        0.07660 &        0.11364 \\
     &      & RN &  1000 &        0.20294 &        0.32129 \\
     & 200  & ABPG &   \textbf{575} &        \textbf{0.09443} &        \textbf{0.12819} \\
     &      & PG &  1000 &        0.19450 &        0.24123 \\
     &      & PGL &  1000 &        0.10177 &        0.15410 \\
     &      & RN &  1000 &        0.43165 &        0.57237 \\
     & 500  & ABPG &   \textbf{602} &        \textbf{0.13678} &        \textbf{0.19227} \\
     &      & PG &  1000 &        0.29906 &        0.32302 \\
     &      & PGL &  1000 &        0.15525 &        0.23809 \\
     &      & RN &  1000 &        2.06468 &        1.90547 \\
     & 1000 & ABPG &   \textbf{631} &        \textbf{0.17750} &        \textbf{0.25676} \\
     &      & PG &  1000 &        0.42798 &        0.41943 \\
     &      & PGL &  1000 &        0.21516 &        0.33322 \\
     &      & RN &  1000 &       15.67614 &        9.69705 \\
\bottomrule
\end{tabular}
\end{table}

Moreover, Tables~\ref{tab:lp-1000} and~\ref{tab:lp-2000} show average performance over 50 random instances for $p =1.1, m = 1000, 2000$, and $n = 100, 200, 500, 1000$.
Only ABPG stopped within 1000 iterations and outperformed other algorithms.

% \newpage\quad\newpage

\subsection{The \texorpdfstring{$\ell_p$}{lp} Regularized Least Squares Problem with Equality Constraints}\label{sec:the-lp-regularized-least-squares-problem-with-constraints}
We next consider the $\ell_p$ regularized least squares problem with an equality constraint~\cite{Peng_Ni2008-hx}:
\begin{align}
    \min_{\x\in \R^n}\quad \frac{1}{2}\|\A\x - \b\|^2 + \frac{\theta_p}{p}\|\x\|_p^p + \delta_S(\x),\label{prob:lp-constrained}
\end{align}
where $\delta_S$ denotes the indicator function of $S = \{\x\in\R^n\mid\a^{\mathsf{T}}\x = \gamma\}$ with $\a\in\R^n$ and $\gamma\in\R$.
Let $g(\x) = \delta_S(\x)$.
We use the same definition as~\eqref{def:lp-f-phi} for $f$ and $\phi$.

Let us consider Subproblem~\eqref{subprob:abpg} for solving~\eqref{prob:lp-constrained}.
The feasible points $\x^k$ need to satisfy $\a^{\mathsf{T}}\x^k = \gamma$, which implies that $\a^{\mathsf{T}}\x^k = \a^{\mathsf{T}}(\x^{k-1}+t_k\d^k) = \gamma + t_k\a^{\mathsf{T}}\d^{k-1} = \gamma$.
Therefore, the directions $\d^k$ satisfy $\a^{\mathsf{T}}\d^k = 0$ for all $k\geq0$.
We have the optimality condition of~\eqref{subprob:abpg},
\begin{align*}
    \left[\begin{array}{cc}
        \nabla^2\phi(\bm{x}^k) & \bm{a} \\
        \bm{a}^{\mathsf{T}} & 0
    \end{array}\right]
    \left[\begin{array}{c}
    \bm{d}^k\\
    \mu
    \end{array}
    \right] &= \left[\begin{array}{c}
        -\lambda\nabla f(\bm{x}^k)\\
        0
    \end{array}
    \right],
\end{align*}
where $\mu\in\R$.
Let $\delta = -\bm{a}^{\mathsf{T}}\nabla^2\phi(\bm{x}^k)^{-1}\bm{a}\neq0$.
We solve the linear system and have
\begin{align*}
    \left[\begin{array}{c}
    \bm{d}^k\\
    \mu
    \end{array}
    \right] &= \left[\begin{array}{cc}
        \nabla^2\phi(\bm{x}^k)^{-1} + \delta^{-1}\u\u^{\mathsf{T}} & -\delta^{-1}\u \\
        -\delta^{-1}\u^{\mathsf{T}} & \delta^{-1}
    \end{array}\right]\left[\begin{array}{c}
        -\lambda\nabla f(\bm{x}^k)\\
        0
    \end{array}
    \right]\\
    &= \left[\begin{array}{c}
        -\lambda\nabla^2\phi(\bm{x}^k)^{-1}\nabla f(\bm{x}^k) - \delta^{-1}\lambda\u\u^{\mathsf{T}}\nabla f(\bm{x}^k)\\
        \delta^{-1}\lambda\u^{\mathsf{T}}\nabla f(\bm{x}^k)
    \end{array}\right],
\end{align*}
where $\u = \nabla^2\phi(\bm{x}^k)^{-1}\bm{a}$.
While we consider the case of one linear equality constraint, the above result can be extended to the case of multiple linear equality constraints.
Set $\nabla^2\phi(\bm{x}^k) = \I$ for the proximal gradient method and $\nabla^2\phi(\bm{x}^k) = \kappa\I + \nabla f(\x^k)$ for the regularized Newton method with a fixed parameter $\kappa > 0$.

We generated $\x^0 \in S$ and set $\a = \one$ and $\gamma = 1$.
The other setting for parameters and the termination condition is the same as Section~\ref{sec:the-lp-regularized-least-squares-problem}.
For $(n,m) = (500, 800), p = 1.1$ and $\theta_p = 0.05$, Figures~\ref{fig:lp-reg-const} and~\ref{fig:lp-reg-const-time} show that the objective function values $\Psi(\x^k)$ and the accuracy $\|\x^k - \x^*\|$ for~\eqref{prob:lp-constrained} on a logarithmic scale.
ABPG outperformed other algorithms on the objective function values and the accuracy (Figures~\ref{fig:lp-reg-obj-const-p=1.1} and~\ref{fig:lp-reg-acc-const-p=1.1} for the iteration axis and Figures~\ref{fig:lp-reg-obj-const-p=1.1-time} and~\ref{fig:lp-reg-acc-const-p=1.1-time} for the time axis).
PG, PGL, and RN did not stop within 1000 iterations.
The reason for this is the same reason for~\eqref{prob:lp}.
PG did not decrease the objective function value due to lacking the global Lipschitz continuity for $\nabla f$.
The line search procedure of PGL did not work well.
RN also did not work well because of ill-conditioned inverse matrices.

\begin{figure}[!tbp]
    \begin{minipage}[b]{0.49\linewidth}
    \centering
    \includegraphics[width=\textwidth]{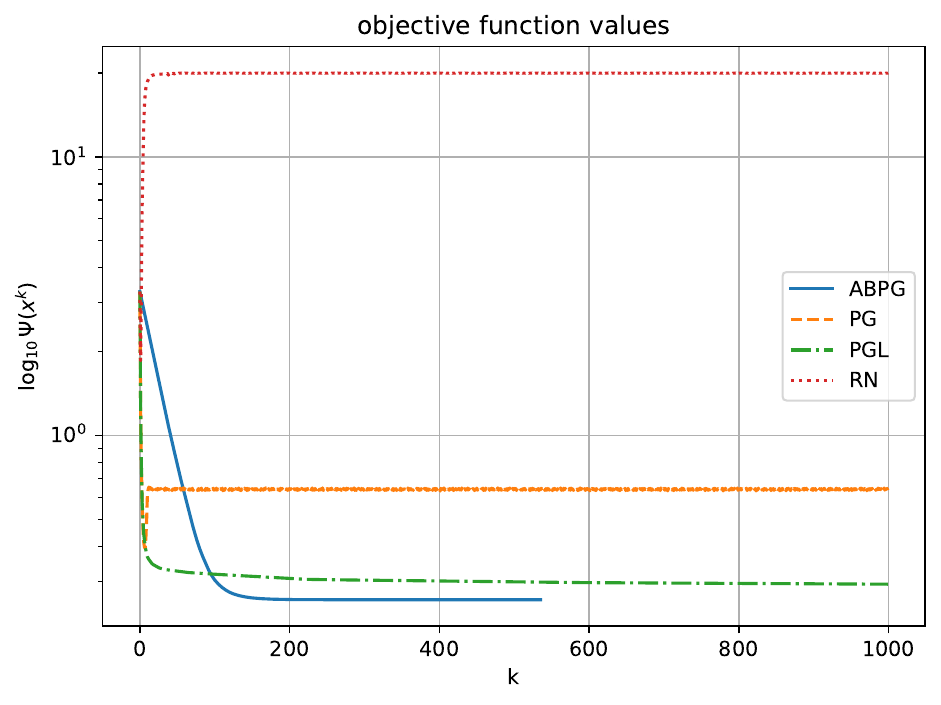}
    \subcaption{Objective function values ($p=1.1$)}
    \label{fig:lp-reg-obj-const-p=1.1}
    \end{minipage}
    \hfill
    \begin{minipage}[b]{0.49\linewidth}
    \centering
    \includegraphics[width=\textwidth]{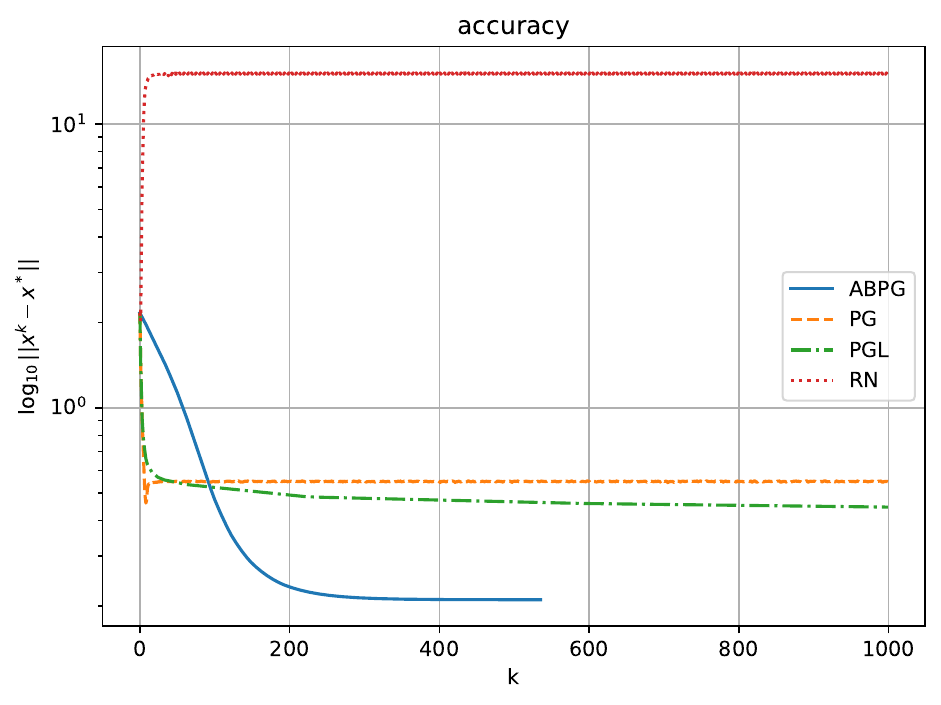}
    \subcaption{Accuracy ($p=1.1$)}
    \label{fig:lp-reg-acc-const-p=1.1}
    \end{minipage}
    \caption{Comparison with ABPG (blue), PG (orange), PGL (green), and RN (red) on the $\ell_p$ regularized least squares problem with an equality constraint~\eqref{prob:lp-constrained}.}
    \label{fig:lp-reg-const}
\end{figure}

\begin{figure}[!tbp]
    \begin{minipage}[b]{0.49\linewidth}
    \centering
    \includegraphics[width=\textwidth]{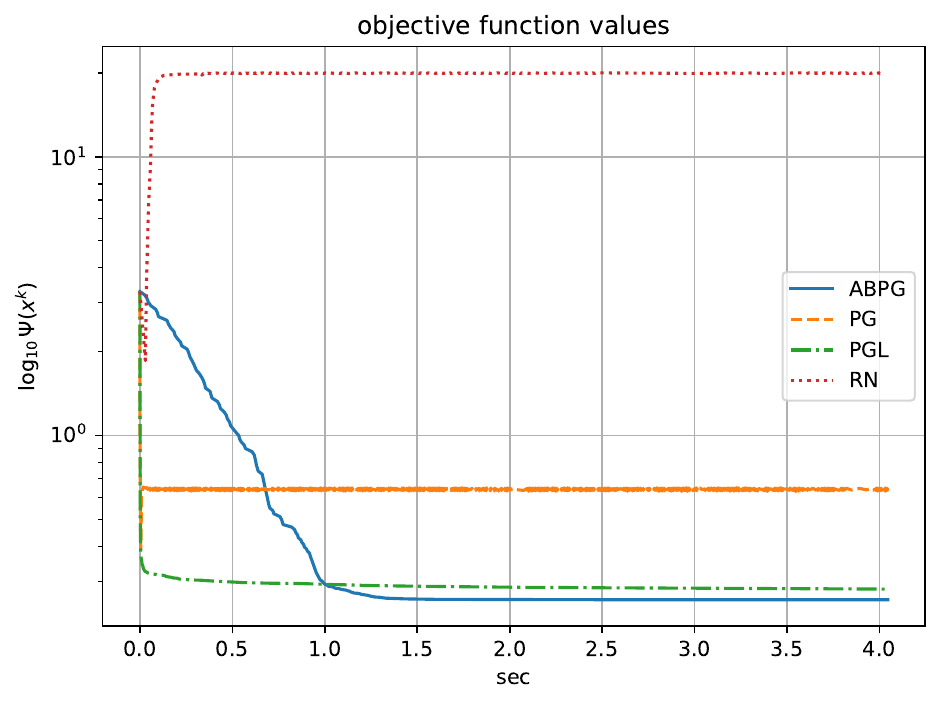}
    \subcaption{Objective function values ($p=1.1$)}
    \label{fig:lp-reg-obj-const-p=1.1-time}
    \end{minipage}
    \hfill
    \begin{minipage}[b]{0.49\linewidth}
    \centering
    \includegraphics[width=\textwidth]{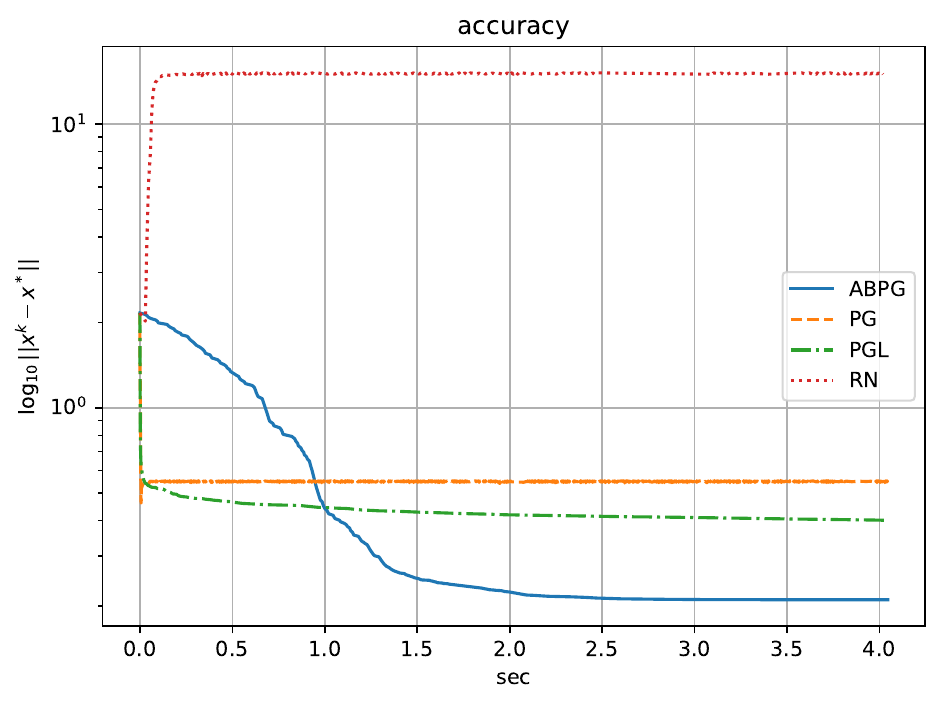}
    \subcaption{Accuracy ($p=1.1$)}
    \label{fig:lp-reg-acc-const-p=1.1-time}
    \end{minipage}
    \caption{Comparison with ABPG (blue), PG (orange), PGL (green), and RN (red) on the $\ell_p$ regularized least squares problem with an equality constraint~\eqref{prob:lp-constrained}.}
    \label{fig:lp-reg-const-time}
\end{figure}

\subsection{The \texorpdfstring{$\ell_p$}{lp} Loss Problem}\label{sec:lp-loss}
We consider the $\ell_p$ loss problem~\cite{Maddison2021-nt}:
\begin{align}
    \min_{\x\in \R^n}\quad \frac{1}{p}\|\A\x - \b\|_p^p,\label{prob:lp-loss}
\end{align}
where $\A\in\R^{m\times n},\b\in\R^m$, and $p > 1$.
Let $f(\x) = \frac{1}{p}\|\A\x - \b\|_p^p$ and $g\equiv0$.
We have 
\begin{align*}
    \nabla f(\bm{x}) &= \sum_{i=1}^m|\bm{a}_i^{\mathsf{T}}\bm{x} - b_i|^{p-1}\sgn(\bm{a}_i^{\mathsf{T}}\bm{x} - b_i)\bm{a}_i,
\end{align*}
which is continuous with $p > 1$.
We also have $\nabla^2 f(\bm{x}) = (p-1)\sum_{i=1}^m|\bm{a}_i^{\mathsf{T}}\bm{x} - b_i|^{p-2}\bm{a}_i\bm{a}_i^{\mathsf{T}}$.
Using $\phi = f + \frac{1}{2}\|\cdot\|^2$, the pair $(f, \phi)$ is 1-smad and $\nabla^2\phi$ is nonsingular. 
Moreover, ABPG is equivalent to RN in this case.
We compare ABPG (RN) with PG and PGL.

\begin{figure}[!tbp]
    \begin{minipage}[b]{0.49\linewidth}
    \centering
    \includegraphics[width=\textwidth]{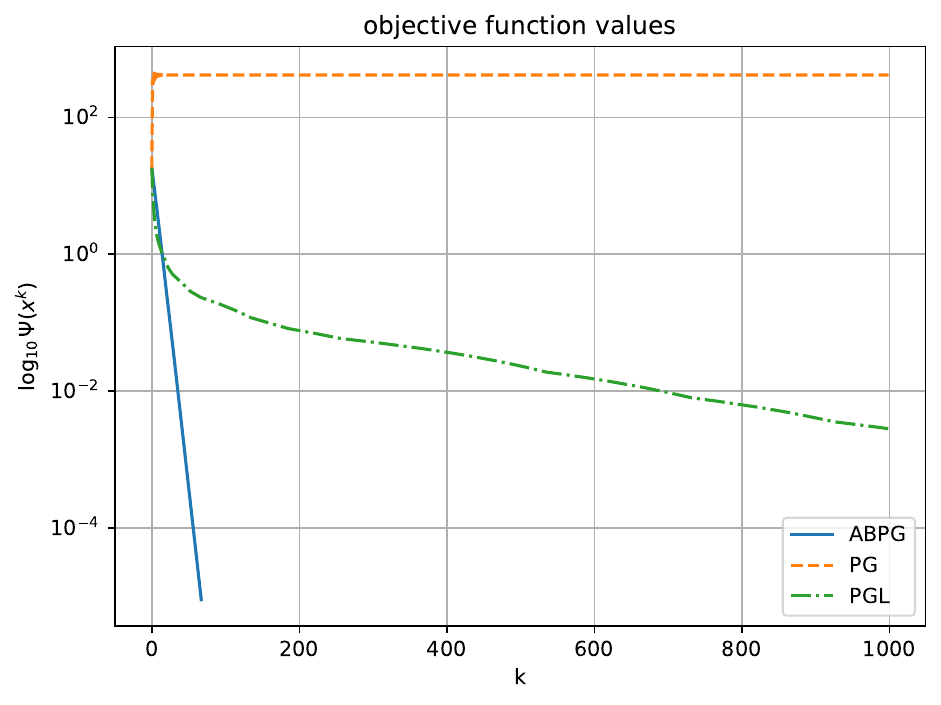}
    \subcaption{Objective function values}
    \label{fig:lp-loss-obj}
    \end{minipage}
    \hfill
    \begin{minipage}[b]{0.49\linewidth}
    \centering
    \includegraphics[width=\textwidth]{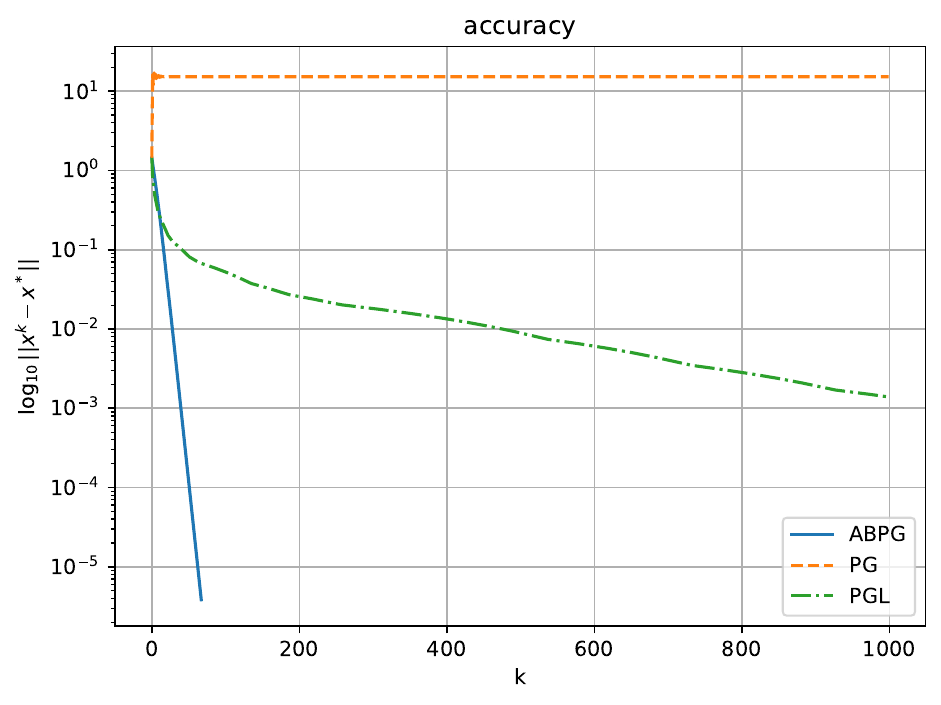}
    \subcaption{Accuracy}
    \label{fig:lp-loss-acc}
    \end{minipage}
    \caption{Comparison with ABPG (blue), PG (orange), and PGL (green) on the $\ell_p$ loss problem~\eqref{prob:lp-loss}.}
    \label{fig:lp-loss}
\end{figure}

\begin{figure}[!tbp]
    \begin{minipage}[b]{0.49\linewidth}
    \centering
    \includegraphics[width=\textwidth]{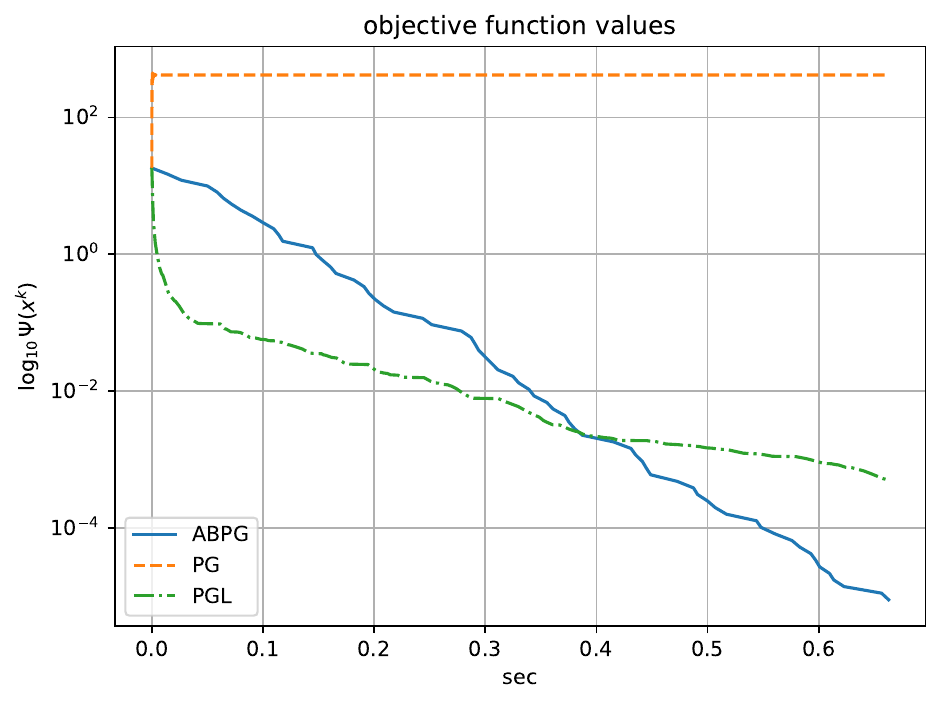}
    \subcaption{Objective function values}
    \label{fig:lp-loss-obj-time}
    \end{minipage}
    \hfill
    \begin{minipage}[b]{0.49\linewidth}
    \centering
    \includegraphics[width=\textwidth]{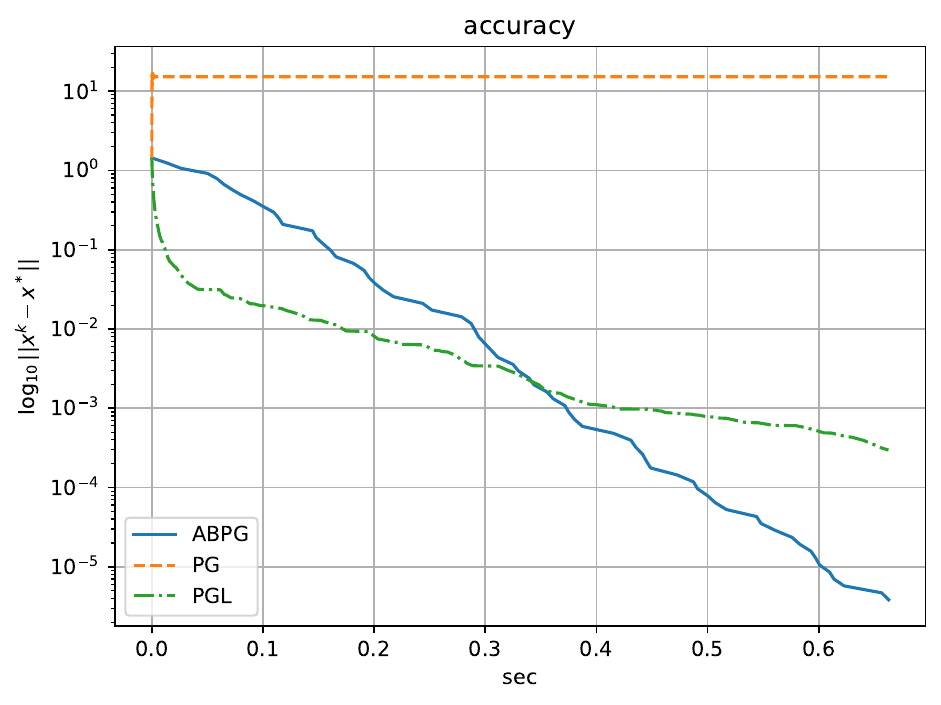}
    \subcaption{Accuracy}
    \label{fig:lp-loss-acc-time}
    \end{minipage}
    \caption{Comparison with ABPG (blue), PG (orange), and PGL (green) on the $\ell_p$ loss problem~\eqref{prob:lp-loss}.}
    \label{fig:lp-loss-time}
\end{figure}

We generated the ground truth $\x^*$ from i.i.d. Gaussian distribution, which has $10\%$ nonzero elements and the initial point $\x^0$ by using the Wirtinger flow initialization~\cite{Candes2015-sd}.
For $(n,m) = (200, 500)$ and $ p = 1.1$, Figure~\ref{fig:lp-loss} shows that the objective function values $\Psi(\x^k)$ and the accuracy $\|\x^k - \x^*\|$ for~\eqref{prob:lp-loss} at each iteration on a logarithmic scale.
On the other hand, Figure~\ref{fig:lp-loss-time} shows these performance results on the time axis.
In this case, PG did not decrease the objective function value (Figures~\ref{fig:lp-loss-obj} and~\ref{fig:lp-loss-obj-time}).
PG and PGL did not stop within 1000 iterations, probably because $\nabla f$ is not globally Lipschitz continuous.
ABPG outperformed PG and PGL.

\subsection{The Nonnegative Linear System on \texorpdfstring{$\R_+^n$}{Rn}}\label{sec:nonnegative-linear-system}
Finally, we consider the nonnegative linear system~\cite{Bauschke2017-hg} and minimize 
\begin{align}
    \min_{\x\in \R_+^n}\quad D_{\textup{KL}}(\A\x,\b) + \theta_1\|\x\|_1, \label{prob:nonnegative-linear}
\end{align}
where
\begin{align}
    D_{\textup{KL}}(\x,\y) = \sum_{i=1}^m \left(x_i\log\frac{x_i}{y_i} + y_i - x_i\right)\label{def:kl-divergence}
\end{align}
is Kullback--Leibler divergence, $\A\in\R_+^{m\times n},\b\in\R_+^m$, and $\theta_1 > 0$.
Let $f(\x) = D_{\textup{KL}}(\A\x,\b)$ and $g(\x) = \theta_1\|\x\|_1$.
Let $\phi_{\textup{BPG}}(\x) = \sum_{i=1}^n x_i\log x_i$ be a kernel generating distance for BPG and $\phi_{\textup{ABPG}}(\x) = \phi_{\textup{BPG}}(\x) + \frac{1}{2}\|\x\|^2$ be that for ABPG.
Therefore, we use $C = \interior\dom\phi_{\textup{BPG}} = \interior\dom\phi_{\textup{ABPG}} = \R_+^n$.
Assuming $\sum_{i=1}^m a_{ij}=1$ for any $j$ (this assumption is general for applications, for example, positron emission tomography~\cite{Vardi1985-de}), we find that $(f,\phi_{\textup{BPG}})$ is 1-smad~\cite{Bauschke2017-hg} and $(f,\phi_{\textup{ABPG}})$ is also 1-smad.

We compare ABPG with BPG to demonstrate the difference in performance by the approximate Bregman distance.
The subproblems of BPG and ABPG for~\eqref{prob:nonnegative-linear} are expressed in a closed form.
We also compare ABPG with PGL.

\begin{figure}[!htbp]
    \begin{minipage}[b]{0.49\linewidth}
    \centering
    \includegraphics[width=\textwidth]{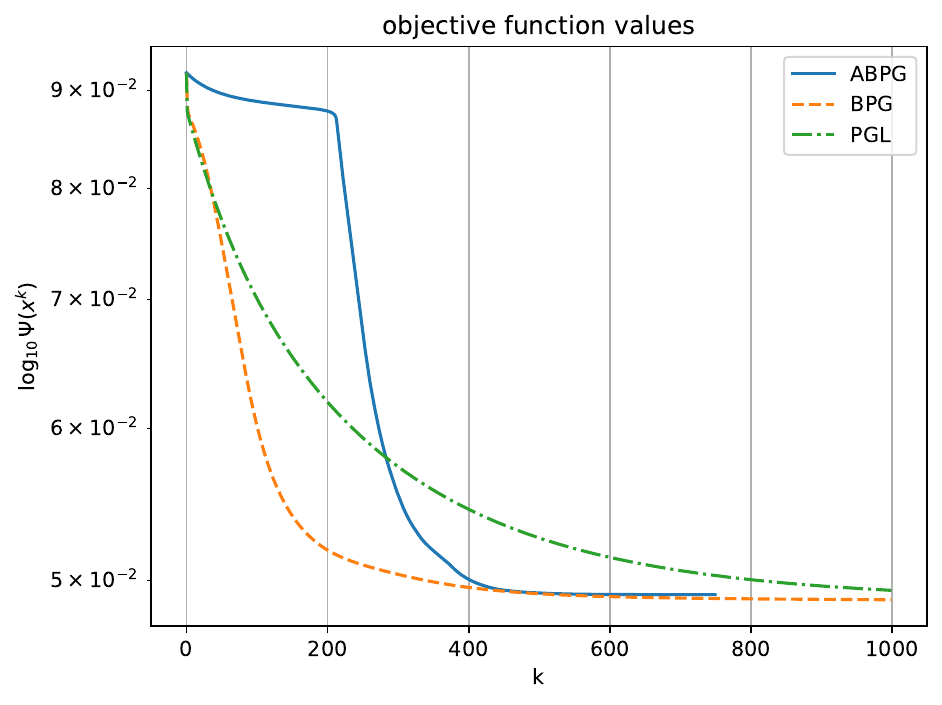}
    \subcaption{Objective function values}
    \label{fig:nonnegative_linear-obj}
    \end{minipage}
    \hfill
    \begin{minipage}[b]{0.49\linewidth}
    \centering
    \includegraphics[width=\textwidth]{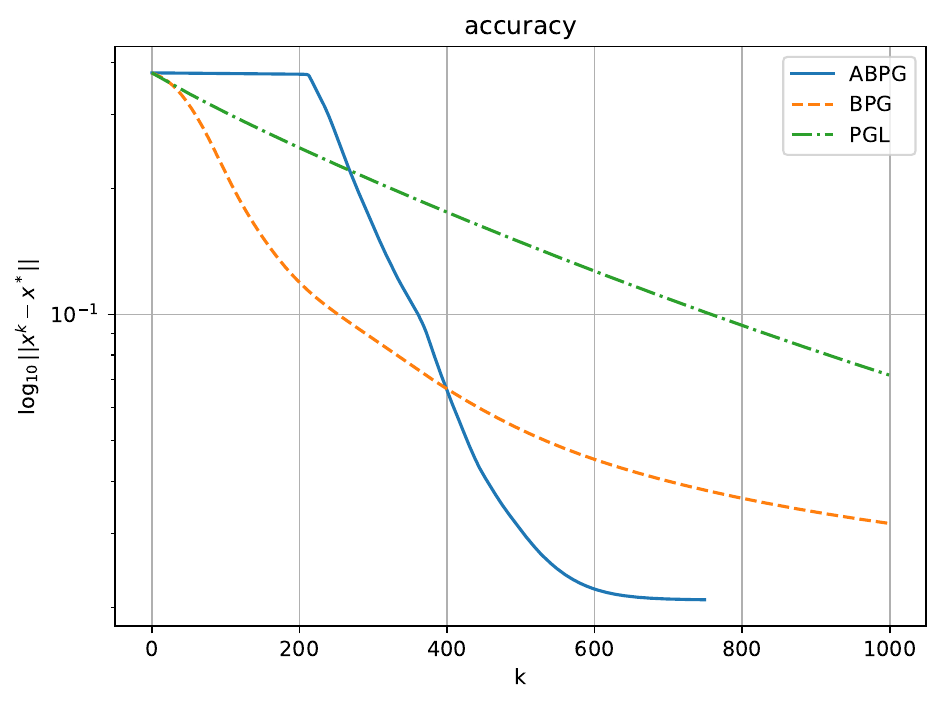}
    \subcaption{Accuracy}
    \label{fig:nonnegative_linear-acc}
    \end{minipage}
    \caption{Comparison with ABPG (blue), BPG (orange), and PGL (green) on the nonnegative linear system inverse problem~\eqref{prob:nonnegative-linear}.}
    \label{fig:nonnegative_linear}
\end{figure}

\begin{figure}[!htbp]
    \begin{minipage}[b]{0.49\linewidth}
    \centering
    \includegraphics[width=\textwidth]{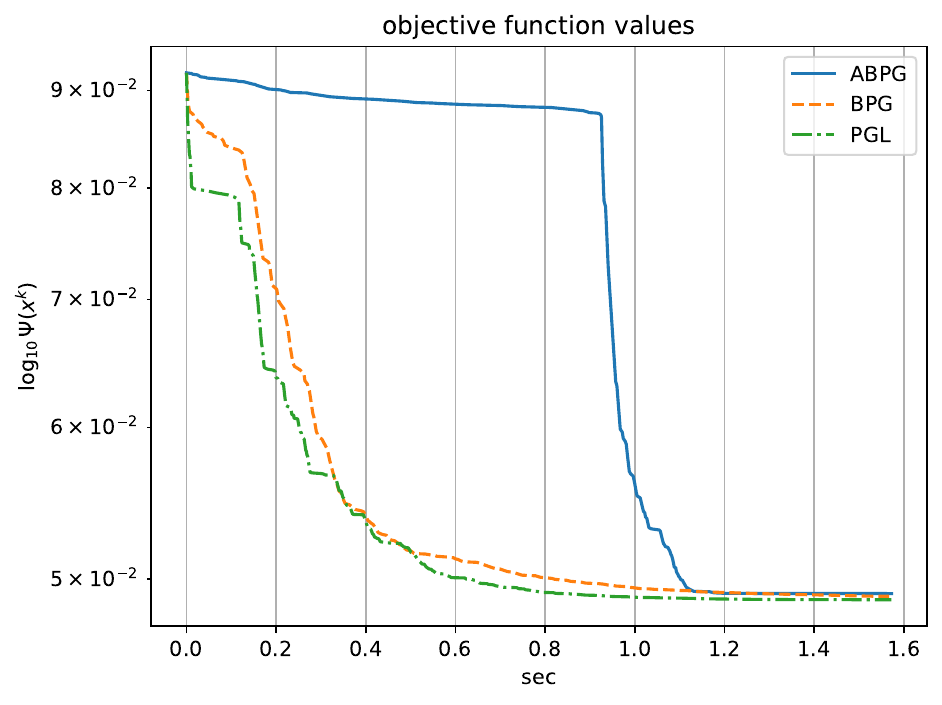}
    \subcaption{Objective function values}
    \label{fig:nonnegative_linear-obj-time}
    \end{minipage}
    \hfill
    \begin{minipage}[b]{0.49\linewidth}
    \centering
    \includegraphics[width=\textwidth]{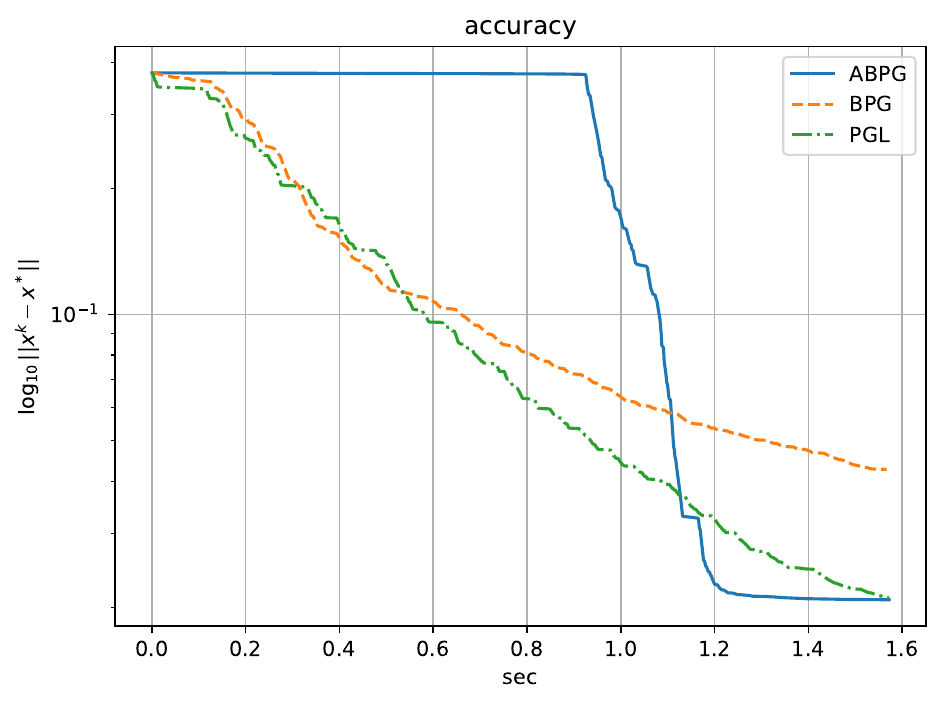}
    \subcaption{Accuracy}
    \label{fig:nonnegative_linear-acc-time}
    \end{minipage}
    \caption{Comparison with ABPG (blue), BPG (orange), and PGL (green) on the nonnegative linear system inverse problem~\eqref{prob:nonnegative-linear}.}
    \label{fig:nonnegative_linear-time}
\end{figure}

We generated the ground truth $\x^*$ from i.i.d. Gaussian distribution, which has $5\%$ nonzero elements.
For $(n,m) = (200, 500)$ and $\theta_1 = 0.05$, Figure~\ref{fig:nonnegative_linear} shows that the objective function values $\Psi(\x^k)$ and the accuracy $\|\x^k - \x^*\|$ for~\eqref{prob:nonnegative-linear} at each iteration on a logarithmic scale.
Figure~\ref{fig:nonnegative_linear-time} shows these performance results on the time axis.
In this setting, while PG and BPG did not stop within 1000 iterations, the objective function value of BPG at the 1000th iteration was the best among these algorithms (Figure~\ref{fig:nonnegative_linear-obj}).
Only ABPG stopped within 1000 iterations and recovered the best point (Figure~\ref{fig:nonnegative_linear-acc}).
Figure~\ref{fig:nonnegative_linear-acc-time} shows PGL (more than the maximum iteration) also recovered a point at which accuracy is almost the same as the point recovered by ABPG.

\section{Conclusion}\label{sec:conclusion}
In this paper, we propose the approximate Bregman proximal gradient algorithm (ABPG) for composite nonconvex optimization problems.
ABPG does not require the global Lipschitz continuity for $\nabla f$.
Its subproblem of ABPG is easy to solve due to the approximate Bregman distance.
In particular, when $\phi$ is strongly convex and $g\equiv0$, the subproblem is always expressed in a closed form.
Moreover, ABPG coincides with existing algorithms by choosing some particular $\phi$.
This implies that we can discuss theoretical aspects of existing algorithms by using the ABPG framework.
We have established the global subsequential convergence with some standard assumptions.
Then, we have guaranteed the global convergence to a stationary point under the KL property when $g\equiv0$.
We also applied ABPG to the $\ell_p$ regularized least squares problems with some cases, the $\ell_p$ loss problem, and the nonnegative linear system.

For large-scale optimization problems, ABPG would be very slow due to the line search procedure.
It is important for future work to develop the acceleration of ABPG.
Recently, some researchers~\cite{Ding2023-jf,Dragomir2022-oh,Yang2022-jr} evaluated the iteration complexity of Bregman-type algorithms.
Because ABPG uses the approximate Bregman distance, it would be challenging to evaluate the iteration complexity of ABPG.
We would like to resolve this matter to evaluate it in the future.
Even if $g\not\equiv0$ is separable, the global convergence would not be established due to the nonsmoothness of $g$.
This might be resolved by improving the line search procedure.

\section*{Declarations}
\textbf{Funding.} This work was supported by JSPS KAKENHI Grant Number JP23K19953; JSPS KAKENHI Grant Number JP23H03351 and JST ERATO Grant Number JPMJER1903.\\
\textbf{Conflict of interest.} The authors have no competing interests to declare that are relevant to the content of this article.\\
\textbf{Data availability.}
The datasets generated during and/or analyzed during the current study are available in the Github repository, \url{https://github.com/ShotaTakahashi/ApproximateBPG}.
\addcontentsline{toc}{section}{References}
\bibliography{main}

%% BioMed_Central_Bib_Style_v1.01

\begin{thebibliography}{53}
% BibTex style file: bmc-mathphys.bst (version 2.1), 2014-07-24
\ifx \bisbn   \undefined \def \bisbn  #1{ISBN #1}\fi
\ifx \binits  \undefined \def \binits#1{#1}\fi
\ifx \bauthor  \undefined \def \bauthor#1{#1}\fi
\ifx \batitle  \undefined \def \batitle#1{#1}\fi
\ifx \bjtitle  \undefined \def \bjtitle#1{#1}\fi
\ifx \bvolume  \undefined \def \bvolume#1{\textbf{#1}}\fi
\ifx \byear  \undefined \def \byear#1{#1}\fi
\ifx \bissue  \undefined \def \bissue#1{#1}\fi
\ifx \bfpage  \undefined \def \bfpage#1{#1}\fi
\ifx \blpage  \undefined \def \blpage #1{#1}\fi
\ifx \burl  \undefined \def \burl#1{\textsf{#1}}\fi
\ifx \doiurl  \undefined \def \doiurl#1{\url{https://doi.org/#1}}\fi
\ifx \betal  \undefined \def \betal{\textit{et al.}}\fi
\ifx \binstitute  \undefined \def \binstitute#1{#1}\fi
\ifx \binstitutionaled  \undefined \def \binstitutionaled#1{#1}\fi
\ifx \bctitle  \undefined \def \bctitle#1{#1}\fi
\ifx \beditor  \undefined \def \beditor#1{#1}\fi
\ifx \bpublisher  \undefined \def \bpublisher#1{#1}\fi
\ifx \bbtitle  \undefined \def \bbtitle#1{#1}\fi
\ifx \bedition  \undefined \def \bedition#1{#1}\fi
\ifx \bseriesno  \undefined \def \bseriesno#1{#1}\fi
\ifx \blocation  \undefined \def \blocation#1{#1}\fi
\ifx \bsertitle  \undefined \def \bsertitle#1{#1}\fi
\ifx \bsnm \undefined \def \bsnm#1{#1}\fi
\ifx \bsuffix \undefined \def \bsuffix#1{#1}\fi
\ifx \bparticle \undefined \def \bparticle#1{#1}\fi
\ifx \barticle \undefined \def \barticle#1{#1}\fi
\bibcommenthead
\ifx \bconfdate \undefined \def \bconfdate #1{#1}\fi
\ifx \botherref \undefined \def \botherref #1{#1}\fi
\ifx \url \undefined \def \url#1{\textsf{#1}}\fi
\ifx \bchapter \undefined \def \bchapter#1{#1}\fi
\ifx \bbook \undefined \def \bbook#1{#1}\fi
\ifx \bcomment \undefined \def \bcomment#1{#1}\fi
\ifx \oauthor \undefined \def \oauthor#1{#1}\fi
\ifx \citeauthoryear \undefined \def \citeauthoryear#1{#1}\fi
\ifx \endbibitem  \undefined \def \endbibitem {}\fi
\ifx \bconflocation  \undefined \def \bconflocation#1{#1}\fi
\ifx \arxivurl  \undefined \def \arxivurl#1{\textsf{#1}}\fi
\csname PreBibitemsHook\endcsname

%%% 1
\bibitem[\protect\citeauthoryear{Bouman and Sauer}{1993}]{Bouman1993-ty}
\begin{barticle}
\bauthor{\bsnm{Bouman}, \binits{C.}},
\bauthor{\bsnm{Sauer}, \binits{K.}}:
\batitle{{A generalized {Gaussian} image model for edge-preserving {MAP} estimation}}.
\bjtitle{IEEE Trans. Image Process.}
\bvolume{2}(\bissue{3}),
\bfpage{296}--\blpage{310}
(\byear{1993})
\end{barticle}
\endbibitem

%%% 2
\bibitem[\protect\citeauthoryear{Elad}{2010}]{Elad2010-hk}
\begin{bbook}
\bauthor{\bsnm{Elad}, \binits{M.}}:
\bbtitle{{Sparse and Redundant Representations: From Theory to Applications in Signal and Image Processing}}.
\bpublisher{Springer},
\blocation{Berlin}
(\byear{2010})
\end{bbook}
\endbibitem

%%% 3
\bibitem[\protect\citeauthoryear{Hastie et~al.}{2009}]{Hastie2009-sb}
\begin{bbook}
\bauthor{\bsnm{Hastie}, \binits{T.}},
\bauthor{\bsnm{Tibshirani}, \binits{R.}},
\bauthor{\bsnm{Friedman}, \binits{J.H.}}:
\bbtitle{{The Elements of Statistical Learning: Data Mining, Inference, and Prediction}},
\bedition{2}nd edn.
\bsertitle{Springer Series in Statistics}.
\bpublisher{Springer},
\blocation{Berlin}
(\byear{2009})
\end{bbook}
\endbibitem

%%% 4
\bibitem[\protect\citeauthoryear{Tibshirani}{1996}]{Tibshirani1996-we}
\begin{barticle}
\bauthor{\bsnm{Tibshirani}, \binits{R.}}:
\batitle{{Regression shrinkage and selection via the lasso: A retrospective}}.
\bjtitle{J. R. Stat. Soc. Series B Stat. Methodol.}
\bvolume{58}(\bissue{1}),
\bfpage{267}--\blpage{288}
(\byear{1996})
\end{barticle}
\endbibitem

%%% 5
\bibitem[\protect\citeauthoryear{Bolte et~al.}{2018}]{Bolte2018-zt}
\begin{barticle}
\bauthor{\bsnm{Bolte}, \binits{J.}},
\bauthor{\bsnm{Sabach}, \binits{S.}},
\bauthor{\bsnm{Teboulle}, \binits{M.}},
\bauthor{\bsnm{Vaisbourd}, \binits{Y.}}:
\batitle{{First order methods beyond convexity and {Lipschitz} gradient continuity with applications to quadratic inverse problems}}.
\bjtitle{SIAM J. Optim.}
\bvolume{28}(\bissue{3}),
\bfpage{2131}--\blpage{2151}
(\byear{2018})
\end{barticle}
\endbibitem

%%% 6
\bibitem[\protect\citeauthoryear{Takahashi et~al.}{2022}]{Takahashi2022-ml}
\begin{barticle}
\bauthor{\bsnm{Takahashi}, \binits{S.}},
\bauthor{\bsnm{Fukuda}, \binits{M.}},
\bauthor{\bsnm{Tanaka}, \binits{M.}}:
\batitle{{New Bregman proximal type algorithms for solving DC optimization problems}}.
\bjtitle{Comput. Optim. Appl.}
\bvolume{83}(\bissue{3}),
\bfpage{893}--\blpage{931}
(\byear{2022})
\end{barticle}
\endbibitem

%%% 7
\bibitem[\protect\citeauthoryear{Chan and Wong}{2000}]{Chan2000-os}
\begin{barticle}
\bauthor{\bsnm{Chan}, \binits{T.F.}},
\bauthor{\bsnm{Wong}, \binits{C.K.}}:
\batitle{{Convergence of the alternating minimization algorithm for blind deconvolution}}.
\bjtitle{Linear Algebra Appl.}
\bvolume{316}(\bissue{1-3}),
\bfpage{259}--\blpage{285}
(\byear{2000})
\end{barticle}
\endbibitem

%%% 8
\bibitem[\protect\citeauthoryear{Li et~al.}{2019}]{Li2019-ab}
\begin{barticle}
\bauthor{\bsnm{Li}, \binits{X.}},
\bauthor{\bsnm{Ling}, \binits{S.}},
\bauthor{\bsnm{Strohmer}, \binits{T.}},
\bauthor{\bsnm{Wei}, \binits{K.}}:
\batitle{{Rapid, robust, and reliable blind deconvolution via nonconvex optimization}}.
\bjtitle{Appl. Comput. Harmon. Anal.}
\bvolume{47}(\bissue{3}),
\bfpage{893}--\blpage{934}
(\byear{2019})
\end{barticle}
\endbibitem

%%% 9
\bibitem[\protect\citeauthoryear{Takahashi et~al.}{2023}]{Takahashi2023-uh}
\begin{barticle}
\bauthor{\bsnm{Takahashi}, \binits{S.}},
\bauthor{\bsnm{Tanaka}, \binits{M.}},
\bauthor{\bsnm{Ikeda}, \binits{S.}}:
\batitle{{Blind deconvolution with non-smooth regularization via Bregman proximal DCAs}}.
\bjtitle{Signal Processing}
\bvolume{202},
\bfpage{108734}
(\byear{2023})
\end{barticle}
\endbibitem

%%% 10
\bibitem[\protect\citeauthoryear{Beck}{2017}]{Beck2017-qc}
\begin{bbook}
\bauthor{\bsnm{Beck}, \binits{A.}}:
\bbtitle{{First-Order Methods in Optimization}}.
\bsertitle{MOS-SIAM Series on Optimization},
vol. \bseriesno{25}.
\bpublisher{SIAM},
\blocation{Philadelphia}
(\byear{2017})
\end{bbook}
\endbibitem

%%% 11
\bibitem[\protect\citeauthoryear{Rockafellar}{1976}]{Rockafellar1976-th}
\begin{barticle}
\bauthor{\bsnm{Rockafellar}, \binits{R.T.}}:
\batitle{{Monotone operators and the proximal point algorithm}}.
\bjtitle{SIAM J. Control Optim.}
\bvolume{14}(\bissue{5}),
\bfpage{877}--\blpage{898}
(\byear{1976})
\end{barticle}
\endbibitem

%%% 12
\bibitem[\protect\citeauthoryear{Fukushima and Mine}{1981}]{Fukushima1981-wn}
\begin{barticle}
\bauthor{\bsnm{Fukushima}, \binits{M.}},
\bauthor{\bsnm{Mine}, \binits{H.}}:
\batitle{{A generalized proximal point algorithm for certain non-convex minimization problems}}.
\bjtitle{Int. J. Syst. Sci.}
\bvolume{12}(\bissue{8}),
\bfpage{989}--\blpage{1000}
(\byear{1981})
\end{barticle}
\endbibitem

%%% 13
\bibitem[\protect\citeauthoryear{Bruck}{1975}]{Bruck1975-et}
\begin{barticle}
\bauthor{\bsnm{Bruck}, \binits{R.E.}}:
\batitle{{An iterative solution of a variational inequality for certain monotone operators in {Hilbert} space}}.
\bjtitle{Bull. Am. Math. Soc.}
\bvolume{81}(\bissue{5}),
\bfpage{890}--\blpage{892}
(\byear{1975})
\end{barticle}
\endbibitem

%%% 14
\bibitem[\protect\citeauthoryear{Lions and Mercier}{1979}]{Lions1979-id}
\begin{barticle}
\bauthor{\bsnm{Lions}, \binits{P.L.}},
\bauthor{\bsnm{Mercier}, \binits{B.}}:
\batitle{{Splitting algorithms for the sum of two nonlinear operators}}.
\bjtitle{SIAM J. Numer. Anal.}
\bvolume{16}(\bissue{6}),
\bfpage{964}--\blpage{979}
(\byear{1979})
\end{barticle}
\endbibitem

%%% 15
\bibitem[\protect\citeauthoryear{Passty}{1979}]{Passty1979-sk}
\begin{barticle}
\bauthor{\bsnm{Passty}, \binits{G.B.}}:
\batitle{{Ergodic convergence to a zero of the sum of monotone operators in {Hilbert} space}}.
\bjtitle{J. Math. Anal. Appl.}
\bvolume{72},
\bfpage{383}--\blpage{390}
(\byear{1979})
\end{barticle}
\endbibitem

%%% 16
\bibitem[\protect\citeauthoryear{Beck and Teboulle}{2009}]{Beck2009-kr}
\begin{barticle}
\bauthor{\bsnm{Beck}, \binits{A.}},
\bauthor{\bsnm{Teboulle}, \binits{M.}}:
\batitle{{A fast iterative shrinkage-thresholding algorithm for linear inverse problems}}.
\bjtitle{SIAM J. Imaging Sci.}
\bvolume{2}(\bissue{1}),
\bfpage{183}--\blpage{202}
(\byear{2009})
\end{barticle}
\endbibitem

%%% 17
\bibitem[\protect\citeauthoryear{Patriksson}{1999}]{Patriksson1999-yp}
\begin{bbook}
\bauthor{\bsnm{Patriksson}, \binits{M.}}:
\bbtitle{{Nonlinear Programming and Variational Inequality Problems: A Unified Approach}}.
\bpublisher{Springer},
\blocation{Berlin}
(\byear{1999})
\end{bbook}
\endbibitem

%%% 18
\bibitem[\protect\citeauthoryear{Becker and Fadili}{2012}]{Becker2012-ji}
\begin{botherref}
\oauthor{\bsnm{Becker}, \binits{S.}},
\oauthor{\bsnm{Fadili}, \binits{J.}}:
{A quasi-Newton proximal splitting method}.
Adv. Neural Inf. Process. Syst.
\textbf{25}
(2012)
\end{botherref}
\endbibitem

%%% 19
\bibitem[\protect\citeauthoryear{Lee et~al.}{2014}]{Lee2014-zv}
\begin{barticle}
\bauthor{\bsnm{Lee}, \binits{J.D.}},
\bauthor{\bsnm{Sun}, \binits{Y.}},
\bauthor{\bsnm{Saunders}, \binits{M.A.}}:
\batitle{{Proximal Newton-type methods for minimizing composite functions}}.
\bjtitle{SIAM J. Optim.}
\bvolume{24}(\bissue{3}),
\bfpage{1420}--\blpage{1443}
(\byear{2014})
\end{barticle}
\endbibitem

%%% 20
\bibitem[\protect\citeauthoryear{Nemirovski and Yudin}{1983}]{Nemirovski1983-yw}
\begin{bbook}
\bauthor{\bsnm{Nemirovski}, \binits{A.S.}},
\bauthor{\bsnm{Yudin}, \binits{D.B.}}:
\bbtitle{{Problem Complexity and Method Efficiency in Optimization}}.
\bsertitle{Wiley Series in Discrete Mathematics}.
\bpublisher{Wiley},
\blocation{New York}
(\byear{1983})
\end{bbook}
\endbibitem

%%% 21
\bibitem[\protect\citeauthoryear{Bregman}{1967}]{Bregman1967-rf}
\begin{barticle}
\bauthor{\bsnm{Bregman}, \binits{L.M.}}:
\batitle{{The relaxation method of finding the common point of convex sets and its application to the solution of problems in convex programming}}.
\bjtitle{U.S.S.R. Comput. Math. Math. Phys.}
\bvolume{7}(\bissue{3}),
\bfpage{200}--\blpage{217}
(\byear{1967})
\end{barticle}
\endbibitem

%%% 22
\bibitem[\protect\citeauthoryear{Ding et~al.}{2023}]{Ding2023-jf}
\begin{botherref}
\oauthor{\bsnm{Ding}, \binits{K.}},
\oauthor{\bsnm{Li}, \binits{J.}},
\oauthor{\bsnm{Toh}, \binits{K.C.}}:
{Nonconvex stochastic Bregman proximal gradient method with application to deep learning}.
arXiv:2306.14522
(2023)
{\href{https://arxiv.org/abs/2306.14522}{{arXiv:2306.14522}}}
{[math.OC]}
\end{botherref}
\endbibitem

%%% 23
\bibitem[\protect\citeauthoryear{Gao et~al.}{2023}]{Gao2023-ri}
\begin{barticle}
\bauthor{\bsnm{Gao}, \binits{X.}},
\bauthor{\bsnm{Cai}, \binits{X.}},
\bauthor{\bsnm{Wang}, \binits{X.}},
\bauthor{\bsnm{Han}, \binits{D.}}:
\batitle{{An alternating structure-adapted Bregman proximal gradient descent algorithm for constrained nonconvex nonsmooth optimization problems and its inertial variant}}.
\bjtitle{J. Global Optimiz.}
\bvolume{87}(\bissue{1}),
\bfpage{277}--\blpage{300}
(\byear{2023})
\end{barticle}
\endbibitem

%%% 24
\bibitem[\protect\citeauthoryear{Hanzely et~al.}{2021}]{Hanzely2021-sz}
\begin{barticle}
\bauthor{\bsnm{Hanzely}, \binits{F.}},
\bauthor{\bsnm{Richt{\'a}rik}, \binits{P.}},
\bauthor{\bsnm{Xiao}, \binits{L.}}:
\batitle{{Accelerated Bregman proximal gradient methods for relatively smooth convex optimization}}.
\bjtitle{Comput. Optim. Appl.}
\bvolume{79}(\bissue{2}),
\bfpage{405}--\blpage{440}
(\byear{2021})
\end{barticle}
\endbibitem

%%% 25
\bibitem[\protect\citeauthoryear{Lu et~al.}{2018}]{Lu2018-ii}
\begin{barticle}
\bauthor{\bsnm{Lu}, \binits{H.}},
\bauthor{\bsnm{Freund}, \binits{R.M.}},
\bauthor{\bsnm{Nesterov}, \binits{Y.}}:
\batitle{{Relatively smooth convex optimization by first-order methods, and applications}}.
\bjtitle{SIAM J. Optim.}
\bvolume{28}(\bissue{1}),
\bfpage{333}--\blpage{354}
(\byear{2018})
\end{barticle}
\endbibitem

%%% 26
\bibitem[\protect\citeauthoryear{Bauschke et~al.}{2017}]{Bauschke2017-hg}
\begin{barticle}
\bauthor{\bsnm{Bauschke}, \binits{H.H.}},
\bauthor{\bsnm{Bolte}, \binits{J.}},
\bauthor{\bsnm{Teboulle}, \binits{M.}}:
\batitle{{A descent lemma beyond Lipschitz gradient continuity: First-order methods revisited and applications}}.
\bjtitle{Math. Oper. Res.}
\bvolume{42}(\bissue{2}),
\bfpage{330}--\blpage{348}
(\byear{2017})
\end{barticle}
\endbibitem

%%% 27
\bibitem[\protect\citeauthoryear{Dragomir et~al.}{2021}]{Dragomir2021-rv}
\begin{barticle}
\bauthor{\bsnm{Dragomir}, \binits{R.A.}},
\bauthor{\bsnm{d'Aspremont}, \binits{A.}},
\bauthor{\bsnm{Bolte}, \binits{J.}}:
\batitle{{Quartic first-order methods for low-rank minimization}}.
\bjtitle{J. Optim. Theory Appl.}
\bvolume{189},
\bfpage{341}--\blpage{363}
(\byear{2021})
\end{barticle}
\endbibitem

%%% 28
\bibitem[\protect\citeauthoryear{Takahashi et~al.}{2024}]{Takahashi2024-cu}
\begin{botherref}
\oauthor{\bsnm{Takahashi}, \binits{S.}},
\oauthor{\bsnm{Tanaka}, \binits{M.}},
\oauthor{\bsnm{Ikeda}, \binits{S.}}:
{Majorization-minimization Bregman proximal gradient algorithms for nonnegative matrix factorization with the Kullback--Leibler divergence}.
arXiv:2405.11185
(2024)
{\href{https://arxiv.org/abs/2405.11185}{{arXiv:2405.11185}}}
{[math.OC]}
\end{botherref}
\endbibitem

%%% 29
\bibitem[\protect\citeauthoryear{Yue et~al.}{2019}]{Yue2019-da}
\begin{barticle}
\bauthor{\bsnm{Yue}, \binits{M.C.}},
\bauthor{\bsnm{Zhou}, \binits{Z.}},
\bauthor{\bsnm{So}, \binits{A.M.}}:
\batitle{{A family of inexact SQA methods for non-smooth convex minimization with provable convergence guarantees based on the Luo--Tseng error bound property}}.
\bjtitle{Math. Program.}
\bvolume{174}(\bissue{1}),
\bfpage{327}--\blpage{358}
(\byear{2019})
\end{barticle}
\endbibitem

%%% 30
\bibitem[\protect\citeauthoryear{Li et~al.}{2004}]{Li2004-hi}
\begin{barticle}
\bauthor{\bsnm{Li}, \binits{D.H.}},
\bauthor{\bsnm{Fukushima}, \binits{M.}},
\bauthor{\bsnm{Qi}, \binits{L.}},
\bauthor{\bsnm{Yamashita}, \binits{N.}}:
\batitle{{Regularized Newton methods for convex minimization problems with singular solutions}}.
\bjtitle{Comput. Optim. Appl.}
\bvolume{28}(\bissue{2}),
\bfpage{131}--\blpage{147}
(\byear{2004})
\end{barticle}
\endbibitem

%%% 31
\bibitem[\protect\citeauthoryear{Chung and Gazzola}{2019}]{Chung2019-kg}
\begin{barticle}
\bauthor{\bsnm{Chung}, \binits{J.}},
\bauthor{\bsnm{Gazzola}, \binits{S.}}:
\batitle{{Flexible Krylov methods for $\ell_p$ regularization}}.
\bjtitle{SIAM J. Sci. Comput.}
\bvolume{41}(\bissue{5}),
\bfpage{149}--\blpage{171}
(\byear{2019})
\end{barticle}
\endbibitem

%%% 32
\bibitem[\protect\citeauthoryear{Wen et~al.}{2016}]{Wen2016-fg}
\begin{bchapter}
\bauthor{\bsnm{Wen}, \binits{F.}},
\bauthor{\bsnm{Liu}, \binits{P.}},
\bauthor{\bsnm{Liu}, \binits{Y.}},
\bauthor{\bsnm{Qiu}, \binits{R.C.}},
\bauthor{\bsnm{Yu}, \binits{W.}}:
\bctitle{{Robust sparse recovery for compressive sensing in impulsive noise using $\ell_p$-norm model fitting}}.
In: \bbtitle{{2016 IEEE International Conference on Acoustics, Speech and Signal Processing (ICASSP)}},
pp. \bfpage{4643}--\blpage{4647}
(\byear{2016})
\end{bchapter}
\endbibitem

%%% 33
\bibitem[\protect\citeauthoryear{Maddison et~al.}{2021}]{Maddison2021-nt}
\begin{barticle}
\bauthor{\bsnm{Maddison}, \binits{C.J.}},
\bauthor{\bsnm{Paulin}, \binits{D.}},
\bauthor{\bsnm{Teh}, \binits{Y.W.}},
\bauthor{\bsnm{Doucet}, \binits{A.}}:
\batitle{{Dual space preconditioning for gradient descent}}.
\bjtitle{SIAM J. Optim.}
\bvolume{31}(\bissue{1}),
\bfpage{991}--\blpage{1016}
(\byear{2021})
\end{barticle}
\endbibitem

%%% 34
\bibitem[\protect\citeauthoryear{Rockafellar and Wets}{1997}]{Rockafellar1997-zb}
\begin{bbook}
\bauthor{\bsnm{Rockafellar}, \binits{R.T.}},
\bauthor{\bsnm{Wets}, \binits{R.J.-B.}}:
\bbtitle{{Variational Analysis}}.
\bsertitle{Grundlehren der mathematischen Wissenschaften},
vol. \bseriesno{317}.
\bpublisher{Springer},
\blocation{Berlin}
(\byear{1997})
\end{bbook}
\endbibitem

%%% 35
\bibitem[\protect\citeauthoryear{Mordukhovich}{2018}]{Mordukhovich2018-sk}
\begin{bbook}
\bauthor{\bsnm{Mordukhovich}, \binits{B.S.}}:
\bbtitle{{Variational Analysis and Applications}}.
\bsertitle{Springer Monographs in Mathematics}.
\bpublisher{Springer},
\blocation{Berlin}
(\byear{2018})
\end{bbook}
\endbibitem

%%% 36
\bibitem[\protect\citeauthoryear{Kullback and Leibler}{1951}]{Kullback1951-yv}
\begin{barticle}
\bauthor{\bsnm{Kullback}, \binits{S.}},
\bauthor{\bsnm{Leibler}, \binits{R.A.}}:
\batitle{{On information and sufficiency}}.
\bjtitle{Ann. Math. Stat.}
\bvolume{22}(\bissue{1}),
\bfpage{79}--\blpage{86}
(\byear{1951})
\end{barticle}
\endbibitem

%%% 37
\bibitem[\protect\citeauthoryear{Itakura}{1968}]{Itakura1968-en}
\begin{bchapter}
\bauthor{\bsnm{Itakura}, \binits{F.}}:
\bctitle{{Analysis synthesis telephony based on the maximum likelihood method}}.
In: \bbtitle{{Proceedings of the 6th International Congress on Acoustics}}
(\byear{1968})
\end{bchapter}
\endbibitem

%%% 38
\bibitem[\protect\citeauthoryear{Bauschke and Borwein}{1997}]{Bauschke1997-vk}
\begin{barticle}
\bauthor{\bsnm{Bauschke}, \binits{H.H.}},
\bauthor{\bsnm{Borwein}, \binits{J.M.}}:
\batitle{{Legendre functions and the method of random Bregman projections}}.
\bjtitle{J. Convex Anal.}
\bvolume{4}(\bissue{1}),
\bfpage{27}--\blpage{67}
(\byear{1997})
\end{barticle}
\endbibitem

%%% 39
\bibitem[\protect\citeauthoryear{Dhillon and Tropp}{2008}]{Dhillon2008-zz}
\begin{barticle}
\bauthor{\bsnm{Dhillon}, \binits{I.S.}},
\bauthor{\bsnm{Tropp}, \binits{J.A.}}:
\batitle{{Matrix nearness problems with {Bregman} divergences}}.
\bjtitle{SIAM J. Matrix Anal. Appl.}
\bvolume{29}(\bissue{4}),
\bfpage{1120}--\blpage{1146}
(\byear{2008})
\end{barticle}
\endbibitem

%%% 40
\bibitem[\protect\citeauthoryear{Attouch et~al.}{2010}]{Attouch2010-nr}
\begin{barticle}
\bauthor{\bsnm{Attouch}, \binits{H.}},
\bauthor{\bsnm{Bolte}, \binits{J.}},
\bauthor{\bsnm{Redont}, \binits{P.}},
\bauthor{\bsnm{Soubeyran}, \binits{A.}}:
\batitle{{Proximal alternating minimization and projection methods for nonconvex problems: An approach based on the Kurdyka-{\L}ojasiewicz inequality}}.
\bjtitle{Math. Oper. Res.}
\bvolume{35}(\bissue{2}),
\bfpage{438}--\blpage{457}
(\byear{2010})
\end{barticle}
\endbibitem

%%% 41
\bibitem[\protect\citeauthoryear{Kurdyka}{1998}]{Kurdyka1998-zd}
\begin{barticle}
\bauthor{\bsnm{Kurdyka}, \binits{K.}}:
\batitle{{On gradients of functions definable in o-minimal structures}}.
\bjtitle{Ann. Inst. Fourier}
\bvolume{48}(\bissue{3}),
\bfpage{769}--\blpage{783}
(\byear{1998})
\end{barticle}
\endbibitem

%%% 42
\bibitem[\protect\citeauthoryear{{\L}ojasiewicz}{1963}]{Lojasiewicz1963-sm}
\begin{botherref}
\oauthor{\bsnm{{\L}ojasiewicz}, \binits{S.}}:
{Une propri{\'e}t{\'e} topologique des sous-ensembles analytiques r{\'e}els}.
Les {\'e}quations aux d{\'e}riv{\'e}es partielles
(1963)
\end{botherref}
\endbibitem

%%% 43
\bibitem[\protect\citeauthoryear{Li and Pong}{2018}]{Li2018-mf}
\begin{barticle}
\bauthor{\bsnm{Li}, \binits{G.}},
\bauthor{\bsnm{Pong}, \binits{T.K.}}:
\batitle{{Calculus of the exponent of {Kurdyka}--{{\L}ojasiewicz} inequality and its applications to linear convergence of first-order methods}}.
\bjtitle{Found. Comut. Math.}
\bvolume{18},
\bfpage{1199}--\blpage{1232}
(\byear{2018})
\end{barticle}
\endbibitem

%%% 44
\bibitem[\protect\citeauthoryear{Bolte et~al.}{2014}]{Bolte2014-td}
\begin{barticle}
\bauthor{\bsnm{Bolte}, \binits{J.}},
\bauthor{\bsnm{Sabach}, \binits{S.}},
\bauthor{\bsnm{Teboulle}, \binits{M.}}:
\batitle{{Proximal alternating linearized minimization for nonconvex and nonsmooth problems}}.
\bjtitle{Math. Program.}
\bvolume{146},
\bfpage{459}--\blpage{494}
(\byear{2014})
\end{barticle}
\endbibitem

%%% 45
\bibitem[\protect\citeauthoryear{Bauschke and Combettes}{2017}]{Bauschke2017-jp}
\begin{bbook}
\bauthor{\bsnm{Bauschke}, \binits{H.H.}},
\bauthor{\bsnm{Combettes}, \binits{P.L.}}:
\bbtitle{{Convex Analysis and Monotone Operator Theory in Hilbert Spaces}},
\bedition{2nd} edn.
\bsertitle{CMS Books in Mathematics}.
\bpublisher{Springer},
\blocation{Berlin}
(\byear{2017})
\end{bbook}
\endbibitem

%%% 46
\bibitem[\protect\citeauthoryear{Nocedal and Wright}{2006}]{Nocedal2006-sb}
\begin{bbook}
\bauthor{\bsnm{Nocedal}, \binits{J.}},
\bauthor{\bsnm{Wright}, \binits{S.J.}}:
\bbtitle{{Numerical Optimization}},
\bedition{2nd} edn.
\bsertitle{Springer Series in Operations Research and Financial Engineering}.
\bpublisher{Springer},
\blocation{Berlin}
(\byear{2006})
\end{bbook}
\endbibitem

%%% 47
\bibitem[\protect\citeauthoryear{Mordukhovich}{2006}]{Mordukhovich2006-xd}
\begin{bbook}
\bauthor{\bsnm{Mordukhovich}, \binits{B.S.}}:
\bbtitle{{Variational Analysis and Generalized Differentiation I: Basic Theory}}.
\bsertitle{Grundlehren der mathematischen Wissenschaften},
vol. \bseriesno{330}.
\bpublisher{Springer},
\blocation{Berlin}
(\byear{2006})
\end{bbook}
\endbibitem

%%% 48
\bibitem[\protect\citeauthoryear{Attouch and Bolte}{2009}]{Attouch2009-wf}
\begin{barticle}
\bauthor{\bsnm{Attouch}, \binits{H.}},
\bauthor{\bsnm{Bolte}, \binits{J.}}:
\batitle{{On the convergence of the proximal algorithm for nonsmooth functions involving analytic features}}.
\bjtitle{Math. Program.}
\bvolume{116}(\bissue{1}),
\bfpage{5}--\blpage{16}
(\byear{2009})
\end{barticle}
\endbibitem

%%% 49
\bibitem[\protect\citeauthoryear{Ni and Walker}{2008}]{Peng_Ni2008-hx}
\begin{barticle}
\bauthor{\bsnm{Ni}, \binits{P.}},
\bauthor{\bsnm{Walker}, \binits{H.}}:
\batitle{{A linearly constrained least-squares problem in electronic structure computations}}.
\bjtitle{The International Conference on Computational \& Experimental Engineering and Sciences}
\bvolume{7}(\bissue{1}),
\bfpage{43}--\blpage{50}
(\byear{2008})
\end{barticle}
\endbibitem

%%% 50
\bibitem[\protect\citeauthoryear{Cand{\`e}s et~al.}{2015}]{Candes2015-sd}
\begin{barticle}
\bauthor{\bsnm{Cand{\`e}s}, \binits{E.J.}},
\bauthor{\bsnm{Li}, \binits{X.}},
\bauthor{\bsnm{Soltanolkotabi}, \binits{M.}}:
\batitle{{Phase retrieval via {Wirtinger} flow: Theory and algorithms}}.
\bjtitle{IEEE Trans. Inf. Theory}
\bvolume{61}(\bissue{4}),
\bfpage{1985}--\blpage{2007}
(\byear{2015})
\end{barticle}
\endbibitem

%%% 51
\bibitem[\protect\citeauthoryear{Vardi et~al.}{1985}]{Vardi1985-de}
\begin{barticle}
\bauthor{\bsnm{Vardi}, \binits{Y.}},
\bauthor{\bsnm{Shepp}, \binits{L.A.}},
\bauthor{\bsnm{Kaufman}, \binits{L.}}:
\batitle{{A statistical model for positron emission tomography}}.
\bjtitle{J. Am. Stat. Assoc.}
\bvolume{80}(\bissue{389}),
\bfpage{8}--\blpage{20}
(\byear{1985})
\end{barticle}
\endbibitem

%%% 52
\bibitem[\protect\citeauthoryear{Dragomir et~al.}{2022}]{Dragomir2022-oh}
\begin{barticle}
\bauthor{\bsnm{Dragomir}, \binits{R.A.}},
\bauthor{\bsnm{Taylor}, \binits{A.B.}},
\bauthor{\bsnm{d'Aspremont}, \binits{A.}},
\bauthor{\bsnm{Bolte}, \binits{J.}}:
\batitle{{Optimal complexity and certification of Bregman first-order methods}}.
\bjtitle{Math. Program.}
\bvolume{194}(\bissue{1}),
\bfpage{41}--\blpage{83}
(\byear{2022})
\end{barticle}
\endbibitem

%%% 53
\bibitem[\protect\citeauthoryear{Yang and Toh}{2022}]{Yang2022-jr}
\begin{barticle}
\bauthor{\bsnm{Yang}, \binits{L.}},
\bauthor{\bsnm{Toh}, \binits{K.C.}}:
\batitle{{Bregman proximal point algorithm revisited: A new inexact version and its inertial variant}}.
\bjtitle{SIAM J. Optim.}
\bvolume{32}(\bissue{3}),
\bfpage{1523}--\blpage{1554}
(\byear{2022})
\end{barticle}
\endbibitem

\end{thebibliography}

\end{document}